\newtheorem{theorem}{Theorem}[section]
\newtheorem{proposition}[theorem]{Proposition}
\newtheorem{corollary}[theorem]{Corollary}
\newtheorem{lemma}[theorem]{Lemma}
\theoremstyle{definition}
\newtheorem{definition}[theorem]{Definition}
\newtheorem{example}[theorem]{Example}
\newtheorem{construction}[theorem]{Construction}
\theoremstyle{remark}
\newtheorem*{remark}{Remark}
\numberwithin{equation}{section}
\numberwithin{figure}{section}
\def\C{\mathbb C}
\def\D{\mathbb D}
\def\I{\mathbb I}
\def\Q{\mathbb Q}
\def\R{\mathbb R}
\def\T{\mathbb T}
\def\Z{\mathbb Z}
\newcommand{\cF}{\mathcal F}
\def\sK{\mathcal K}
\def\phi{\varphi}
\newcommand{\mb}[1]{{\textbf {\textit#1}}}
\renewcommand{\ge}{\geqslant}
\renewcommand{\le}{\leqslant}
\newcommand{\Ker}{\mathop{\rm Ker}}
\def\rk{\mathop{\mathrm{rk}}}
\renewcommand{\Re}{\mathop{\mathrm{Re}}}
\renewcommand{\Im}{\mathop{\mathrm{Im}}}
\newcommand{\zk}{\mathcal Z_{\mathcal K}}
\newcommand{\zp}{\mathcal Z_P}
\def\Gr{\mathop{\mathrm{Gr}}}
\begin{document}

\title{Complex geometry of moment-angle manifolds}

\author{Taras Panov}
\address{Department of Mathematics and Mechanics, Moscow
State University, Leninskie Gory, 119991 Moscow, Russia,
\newline\indent Institute for Theoretical and Experimental Physics,
Moscow, Russia,
\newline\indent Institute for Information Transmission Problems,
Russian Academy of Sciences}
\email{tpanov@mech.math.msu.su}

\author{Yury Ustinovskiy}
\address{Steklov Institute of Mathematics, 8 Gubkina str., 119991 Moscow, Russia
\newline\indent Department of Mathematics, Princeton University,
Princeton, USA
}
\email{yura.ust@gmail.com}

\author{Misha Verbitsky}
\address{Laboratory of Algebraic Geometry, Department of Mathematics,
National Research University ``Higher School of Economics'', 7
Vavilova str., Moscow, Russia
\newline\indent Universit\'e Libre de Bruxelles, CP 218,
Bd du Triomphe, 1050 Brussels, Belgium}

\email{verbit@mccme.ru}

\thanks{The first and the second
authors were supported by the Russian Science Foundation, RSCF
grant 14-11-00414 at the Steklov Institute of Mathematics. The
third author was partially supported by the RSCF grant 14-21-00053
within the AG Laboratory NRU-HSE}

\keywords{Moment-angle manifold, simplicial fan, non-K\"ahler
complex structure, holomorphic foliation, transversely K\"ahler
metric}

\subjclass[2010]{32J18, 32L05, 32M05, 32Q55, 37F75, 57R19, 14M25}


\begin{abstract}
Moment-angle manifolds provide a wide class of examples of
non-K\"ahler compact complex manifolds. A complex moment-angle
manifold $\mathcal Z$ is constructed via certain combinatorial
data, called a complete simplicial fan. In the case of rational
fans, the manifold $\mathcal Z$ is the total space of a
holomorphic bundle over a toric variety with fibres compact
complex tori. In general, a complex moment-angle manifold
$\mathcal Z$ is equipped with a canonical holomorphic foliation
${\mathcal F}$ which is equivariant with respect to the
$({\C}^\times)^m$-action. Examples of moment-angle manifolds
include Hopf manifolds of Vaisman type, Calabi--Eckmann manifolds,
and their deformations.

We construct transversely K\"ahler metrics on moment-angle
manifolds, under some restriction on the combinatorial data. We
prove that any K\"ahler submanifold (or, more generally, a Fujiki
class $\mathcal C$ subvariety) in such a moment-angle manifold is
contained in a leaf of the foliation~${\mathcal F}$. For a generic
moment-angle manifold~$\mathcal Z$ in its combinatorial class, we
prove that all subvarieties are moment-angle manifolds of smaller
dimension and there are only finitely many of them. This implies,
in particular, that the algebraic dimension of~$\mathcal Z$ is
zero.
\end{abstract}

\maketitle

\tableofcontents

\section{Introduction}

\subsection{Non-K\"ahler geometry}\label{nKg}
The class of non-K\"ahler complex manifolds is by several orders
of magnitude bigger than that of K\"ahler ones. For instance, any
finitely presented group can be realized as a fundamental group of
a compact complex 3-manifold.\footnote{By a result of C. Taubes.
See~\cite{_Panov_Petrunin_} for a simpler proof and references to
earlier works.} However, fundamental groups of compact K\"ahler
manifolds are restricted by many different constraints, and
apparently belong to a minuscule subclass of the class of all
finitely presented groups~\cite{_Amoros_etc_}.

Despite the abundance of non-K\"ahler complex manifolds, there are
only few explicit constructions of them. Historically the first
example was the \emph{Hopf surface}, obtained as the quotient of
${\C}^2\setminus\{\bf 0\}$ by the action of $\Z$ generated by the
transformation $\mb z\mapsto 2\mb z$. This construction can be
generalised by considering  quotients of ${\C}^n\setminus\{\bf
0\}$, $n\ge2$, by the action of $\Z$ generated by a linear
operator with all eigenvalues $\alpha_i$ satisfying
$|\alpha_i|>1$. The resulting quotients are called
\emph{(generalised) Hopf manifolds}. Every Hopf manifold is
diffeomorphic to $S^1 \times S^{2n-1}$. It is clearly
non-K\"ahler, as its second cohomology group is zero. A particular
class of generalised Hopf manifolds also features in the work of
Vaisman~\cite{_Vaisman:non-Kahler_}, where he proved that any
compact locally conformally K\"ahler manifold is non-K\"ahler. We
discuss these in more detail in
Subsection~\ref{_Vaisman_Subsection_}.


Another generalisation of the Hopf construction is due to Calabi
and Eckmann. A Calabi--Eckmann manifold is diffeomorphic to a
product of even number of odd-dimensional spheres: $E= S^{2n_1+1}
\times S^{2n_2+1} \times \cdots \times S^{2n_{2k}+1}$. It is the
total space of a principal $T^{2k}$-bundle over ${\C} P^{n_1}
\times {\C} P^{n_2} \times \cdots \times  {\C} P^{n_{2k}}$.

Any principal $T^l$-bundle $E$ over  a base $B$ is determined
topologically by the Chern classes $\nu_i\in H^2(B, \Z)$, \
$i=1,\ldots,l$. These Chern classes can be obtained as the
cohomology classes of the curvature components of a connection
on~$E$. Now, suppose that $B$ is complex and these curvature
components are forms of type (1,1). In this case, for each complex
structure on the fibre $T^l$ with even~$l$, the total space
becomes a complex manifold, which is easy to see if one expresses
the commutators of vector fields tangent to $E$ through the
connection.

For the Calabi--Eckmann fibration,
\[
 E= S^{2n_1+1} \times S^{2n_2+1} \times \cdots \times S^{2n_{2k}+1}
\stackrel{T^{2k}} \longrightarrow B={\C} P^{n_1} \times {\C}
P^{n_2} \times \cdots \times  {\C} P^{n_{2k}},
\]
the natural connection on fibres has the respective Fubini--Study
forms as its curvatures. This makes $E$ into a complex manifold,
holomorphically fibred over $B$.

Applying this to a $T^2$-fibration $S^1 \times S^{2l-1}\to {\C}
P^{l-1}$, we obtain the classical Hopf manifold, which is also the
quotient of ${\C}^n\backslash\{\bf0\}$ by an action of $\Z$ given
by $\mb z\to \lambda\mb z$, $\lambda\in {\C}$, $|\lambda|>1$. By
deforming this action we obtain a manifold which does not admit
elliptic fibrations. However, there is still a holomorphic
\emph{foliation}, and its differential-geometric properties remain
qualitatively the same (Subsection \ref{_Vaisman_Subsection_}).

In the present paper, we study complex manifolds which arise in a
similar fashion.

Consider a projective (or Moishezon) toric manifold $B$, that is,
a complex manifold equipped with an action of $({\C}^\times)^n$
which has an open orbit. Fixing an even number of cohomology
classes $\eta_1, \ldots, \eta_{2k}\in H^2(B;\Z)$, one obtains a
$T^{2k}$-fibration $\mathcal E\to B$ with the Chern
classes~$\eta_i$. Since the classes $\eta_i$ are represented by
$(1,1)$-forms, the corresponding connection makes $\mathcal E$
into a complex manifold, similar to the Calabi--Eckmann example.
Complex moment-angle manifolds~$\mathcal Z$, which are the main
objects of study in this paper, may be thought of as deformations
of~$\mathcal E$, using a combinatorial procedure explained in
detail in Section~\ref{_complex_ana_stru_Section_}.

The topology of $\mathcal Z$ depends only on a simplicial
complex~$\sK$ on $m$ elements, which is reflected in the notation
$\mathcal Z=\zk$, while the complex structure comes from a
realisation of $\zk$ as the quotient of an open subset $U(\sK)$ in
$\C^m$ by a holomorphic action of a group $C$ isomorphic
to~$\C^\ell$. In order that the action of $C$ be free and proper
one needs to assume that $\sK$ is the underlying complex of a
complete simplicial fan~$\Sigma$.


\subsection{Positive (1,1)-forms and currents on non-K\"ahler
manifolds}\label{sscurrents}

Let $M$ be a compact complex manifold. A differential $(1,1)$-form
$\beta$ on $M$ is called \emph{positive} if it is real and
$\beta(V,JV)\ge0$ for any real tangent vector~$V\in TM$, where $J$
is the operator of the complex structure. Equivalently, a
$(1,1)$-form $\beta=i\sum_{j,k}f_{jk}dz_j\wedge d \overline{z}_k$
is positive if $\sum_{j,k}f_{jk}\xi_j\overline{\xi}_k$ is a
Hermitian positive semidefinite form.


A \emph{$(1,1)$-current} is a linear functional $\Theta$ on the
space $\Lambda^{1,1}_c(M)$ of forms with compact support which is
continuous in one of the $C^k$-topologies defined by the
$C^k$-norm $|\phi|_{C^k}=\sup_{m\in M} \sum_{i=0}^k |\nabla^i
\phi|$. A $(1,1)$-current $\Theta$ is called \emph{positive} if
$\langle \Theta, \beta\rangle \ge 0$ for any positive
$(1,1)$-form~$\beta$. See~\cite{_Harvey_Lawson:currents_}
and~\cite{dema12} for more details on the notions of positive
forms and currents.



The main result of Harvey and Lawson's
work~\cite{_Harvey_Lawson:currents_} states that $M$ does not
admit a K\"ahler metric if and only if $M$ has a nonzero positive
$(1,1)$-current $\Theta$ which is the $(1,1)$-component of an
exact current.
Any complex surface (2-dimensional manifold) $M$ admits a closed
positive (1,1)-current \cite{_Buchdahl:surfaces_,_Lamari_}, and
sometimes a closed positive $(1,1)$-form. The maximal rank of such
form is called the \emph{K\"ahler rank} of~$M$. In
\cite{_Brunella:Inoue_} and \cite{_Chiose_Toma:surfaces_}, complex
surfaces were classified according to their K\"ahler rank; it was
shown that the K\"ahler rank is equal to 1 for all elliptic, Hopf
and Inoue minimal surfaces, and 0 for the rest of class VII
surfaces.

%


%
%

One of the most striking advances in complex geometry of the 2000s
was the discovery of \emph{Oeljeklaus--Toma
manifolds}~\cite{_Oeljeklaus_Toma_}. These are complex
solvmanifolds associated with all number fields admitting complex
and real embeddings. When the number field has the lowest possible
degree (that is, cubic), the corresponding Oeljeklaus--Toma
manifold is an \emph{Inoue surface} of type~$S^0$, hence
Oeljeklaus--Toma manifolds can be considered as generalisations of
Inoue surfaces. In \cite{_OV:Oeljeklaus-Toma_}, a positive exact
$(1,1)$-form $\omega_0$ was constructed on any Oeljeklaus--Toma
manifold. Using this form, it was proved that an Oeljeklaus--Toma
manifold has no curves (see~\cite{_Sima:curves_}), and that an
Oeljeklaus--Toma manifold has no positive dimensional subvarieties
when the number field has precisely~1 complex embedding, up to
complex conjugation (see~\cite{_OV:Oeljeklaus-Toma_}). The
argument, in both cases, is similar to that used in the present
paper: a complex subvariety $Z\subset M$ cannot be transverse to
the zero foliation ${\mathcal F}$ of $\omega_0$, as otherwise
$\int_Z \omega_0^{\dim_{\C} Z}>0$, which is impossible by the
exactness of~$\omega_0$. The rank of ${\mathcal F}$ is equal to
the number of real embeddings. For the case considered
in~\cite{_OV:Oeljeklaus-Toma_} it is~1, hence any subvariety
$Z\subset M$ must contain a leaf of~${\mathcal F}$. The argument
of~\cite{_OV:Oeljeklaus-Toma_} is finished by applying the
``strong approximation theorem'' (a result of number theory) to
prove that the closure of a leaf of ${\mathcal F}$ cannot be
contained in a subvariety of dimension less than $\dim M$.

\subsection{Hopf manifolds and Vaisman manifolds}
\label{_Vaisman_Subsection_}

Positive exact (1,1)-forms are also very useful in the study of
Vaisman manifolds. The latter form a special class of
\emph{locally conformally K\"ahler manifold} (\emph{LCK manifolds}
for short). An LCK manifold is a manifold $M$ whose universal
covering $\widetilde M$ has a K\"ahler metric with the monodromy
of the covering acting on $\widetilde M$ by non-isometric
homotheties. An LCK manifold is called \emph{Vaisman} if its
K\"ahler covering $\widetilde M$ admits a monodromy-equivariant
action of ${\C}^\times$ by non-isometric holomorphic homotheties.

The original definition of Vaisman manifolds
(see~\cite{_Dragomir_Ornea_} for more details, history and
references) builds upon an explicit differential-geometric
construction, and their description given above is a result of
Kamishima--Ornea~\cite{_Kamishima_Ornea_}. The existence of
positive exact $(1,1)$-forms of Vaisman manifolds is shown
in~\cite{_Verbitsky_vanishing_}.

Vaisman called his manifolds ``generalised Hopf manifolds'';
however, his construction does not give all generalised Hopf
manifolds in the sense of Subsection~\ref{nKg}. Indeed, it is
proved in~\cite[Theorem 3.6]{_OV:LCK_pot_} that a Hopf manifold
${\C}^n \setminus\{\mathbf0\}/\{\mb z\sim A\mb z\}$ corresponding
to a linear operator $A$ with all eigenvalues~$>1$ is Vaisman if
and only if $A$ is diagonalisable (the ``if'' part follows from
\cite{_Gauduchon_Ornea_}).

%

Hopf manifolds of Vaisman type are particular cases of
moment-angle manifolds (see Example~\ref{hopf}). Therefore, our
description of complex submanifolds in moment-angle manifolds
generalises the results obtained for Hopf manifolds by Kato
in~\cite{_Kato:submanifolds_}.

\subsection{Moment-angle manifolds and their geometry}
A moment-angle complex $\zk$ is a cellular subcomplex in the unit
polydisc $\D^m\subset{\C}^m$ composed of products of discs and
circles~\cite{bu-pa00}. These products are parametrised by a
finite simplicial complex~$\sK$ on a set of $m$ elements. Each
moment-angle complex $\zk$ carries a natural action of the
$m$-torus~$T^m$. Moment-angle complexes have many interesting
homotopical and geometric properties, and have been studied
intensively over the last ten years (see~\cite{bu-pa00,b-b-c-g10,
gi-lo13,gr-th13,pano13}). When $\sK$ is the boundary of a
simplicial polytope or, more generally, a triangulated sphere, the
moment-angle complex $\zk$ is a manifold, referred to as a
\emph{moment-angle manifold}.

Moment-angle manifolds arising from polytopes can be realised by
nondegenerate intersections of Hermitian quadrics in~${\C}^m$.
These intersections of quadratic surfaces feature in holomorphic
dynamics as transverse spaces for complex foliations
(see~\cite{bo-me06} for a historic account of these developments),
and in symplectic toric topology as level sets for the moment maps
of Hamiltonian torus actions on~${\C}^m$ (see~\cite{audi91}). As
was discovered by Bosio and Meersseman in~\cite{bo-me06},
moment-angle manifolds arising from polytopes admit complex
structures as~\emph{LVM-manifolds}. This led to establishing a
fruitful link between the theory of moment-angle manifolds and
complex geometry.

An alternative construction of complex structures on moment-angle
manifolds was given in~\cite{pa-us12}; it works not only for
manifolds~$\zk$ arising from polytopes, but also when $\sK$ is the
underlying complex of a complete simplicial fan~$\Sigma$. The
complex structure on $\zk$ comes from its realisation as the
quotient of an open subset $U(\sK)\subset\C^m$ (the complement to
a set of coordinate subspaces determined by~$\sK$) by an action of
$\C^\ell$ embedded as a holomorphic subgroup in the
$\C^\times$-torus $(\C^\times)^m$. As in toric geometry, the
simplicial fan condition guarantees that the quotient is
Hausdorff~\cite{pano13}.

The complex moment-angle manifold $\zk$ is non-K\"ahler, except
for the case when $\sK$ is an empty simplicial complex and $\zk$
is a compact complex torus. When the simplicial fan $\Sigma$ is
rational, the manifold $\zk$ is the total space of a holomorphic
principal (Seifert) bundle with fibre a complex torus and base the
toric variety~$V_\Sigma$. (The polytopal case was studied by
Meersseman and Verjovsky in~\cite{me-ve04}; the situation is
similar in general, although the base need not to be projective.)
We therefore obtain a generalisation of the classical families of
Hopf and Calabi--Eckmann manifolds (which are the total spaces of
fibrations with fibre an elliptic curve over $\C P^n$ and $\C
P^k\times\C P^l$, respectively). In this case, although $\zk$ is
not K\"ahler, its complex geometry is quite rich, and meromorphic
functions, divisors and Dolbeault cohomology can be described
explicitly~\cite{me-ve04,pa-us12}.

The situation is completely different when the data defining the
complex structure on~$\zk$ is generic (in particular, the
fan~$\Sigma$ is not rational). In this case, the moment-angle
manifold $\zk$ does not admit global meromorphic functions and
divisors, and behaves similarly to the complex torus obtained as
the quotient of $\C^m$ by a generic lattice of full rank.
Furthermore, $\zk$ has only finitely many complex submanifolds of
a very special type: they are all ``coordinate'', i.\,e. obtained
as quotients of coordinates subspaces in~$\C^m$.

Our study of the complex geometry of moment-angle manifolds $\zk$
builds upon two basic constructions: a holomorphic
foliation~$\mathcal F$ on $\zk$ (which may be thought of as the
result of deformation of the holomorphic fibration $\mathcal E\to
B$ described in Subsection~\ref{nKg}), and a positive closed
$(1,1)$-form $\omega$ whose zero spaces define the foliation $\cF$
over a dense open subset of~$\zk$. Forms with these properties are
called \emph{transverse K\"ahler} for~$\cF$. The precise result is
as follows:

\begingroup
\def\thetheorem{\ref{form2}}
\begin{theorem}
Assume that $\Sigma$ is a weakly normal fan defined by $\{\mathcal
K;\mb a_1,\ldots,\mb a_m\}$. Then there exists an exact
$(1,1)$-form~$\omega_\cF$ on $\zk=U(\sK)/C$ which is transverse
K\"ahler for the foliation~$\cF$ on the dense open subset
$(\C^\times)^m/C\subset U(\sK)/C$.
\end{theorem}
\addtocounter{theorem}{-1}
\endgroup

Transverse K\"ahler forms are often used to study the complex
geometry (subvarieties, stable bundles) of non-K\"ahler
manifolds~\cite{_OV:Oeljeklaus-Toma_,_Verbitsky:Sta_Elli_,
_Verbitsky:stable-Hopf_,_Verbitsky:toric_}. In our case, Fujiki
class~$\mathcal{C}$ subvarieties (in particular, K\"ahler
submanifolds) of moment-angle manifolds are described as follows:

\begingroup
\def\thetheorem{\ref{kahlsub}}
\begin{theorem}
Under the assumptions of Theorem~{\rm\ref{form2}}, any Fujiki
class $\mathcal C$ subvariety of $\zk$ is  contained in a leaf of
the $\ell$-dimensional foliation~$\cF$.
\end{theorem}
\addtocounter{theorem}{-1}
\endgroup

An important consequence is that under generic assumption on the
complex structure (when a generic leaf of~$\cF$ is biholomorphic
to~$\C^\ell$) there are actually \emph{no} Fujiki class $\mathcal
C$ subvarieties of $\zk$ through a generic point
(Corollary~\ref{leavescor}).

We proceed by studying general subvarieties of $\zk$ (not
necessarily of Fujiki class~$\mathcal C$). In the case of
codimension one (divisors), generically there are only finitely
many of them (Theorem~\ref{nodiv}). The proof does not use a
transverse transverse K\"ahler form (and therefore no geometric
restrictions on the fan need to be imposed); instead we reduce the
statement to the case of Hopf manifolds, where it is proved by
analysing one-dimensional foliations. A corollary is that a
generic $\zk$ does not have non-constant meromorphic functions, so
its algebraic dimension is zero.

The foliation ${\mathcal F}$ comes from a group action
(Construction \ref{Ffoli}), so its tangent bundle $T{\mathcal
F}\subset T\zk$ is trivial. Given an irreducible subvariety $Y
\subset\zk$, consider the number $k=\dim T_yY\cap T_y{\mathcal F}$
for $y$ a generic point in~$Y$. Since $T{\mathcal F}$ is a trivial
bundle, there is a rational map $\psi$ from $Y$ to the
corresponding Grassmann manifold of $k$-planes in $T_y{\mathcal
F}$. Using the absence of Fujiki class $\mathcal{C}$ submanifolds
in $\zk$, we deduce by induction on $\dim Y$ that $\psi$ is
constant (see the details in Subsection~\ref{gensubm}).
Generically this cannot happen when $Y$ contains a point from an
open subset $(\C^\times)^m/C\subset\zk$, whose complement consists
of ``coordinate'' submanifolds. We therefore prove

\begingroup
\def\thetheorem{\ref{final}}
\begin{theorem}
Let $\zk$ be a complex moment-angle manifold, with linear maps
$A\colon\R^m\to N_\R$ and $\varPsi\colon\C^\ell\to\C^m$ described
in Construction~{\rm\ref{psi}}. Assume that
\begin{itemize}
\item[(a)]no rational linear function on
$\R^m$ vanishes on $\Ker A$;
\item[(b)] $\Ker A$ does not contain rational vectors of~$\R^m$;
\item[(c)]the map $\varPsi$ satisfies the generic condition of
Lemma~{\rm\ref{generic_cond}};
\item[(d)] the fan $\Sigma$ is weakly normal.
\end{itemize}
Then any irreducible analytic subset $Y\subsetneq\zk$ of
positive dimension is contained in a coordinate submanifold $\mathcal Z_{\mathcal K_J}\subsetneq \zk$.
\end{theorem}
\addtocounter{theorem}{-1}
\endgroup

Under further assumption on the complex structure of $\zk$ we
prove that all irreducible analytic subsets are actually
coordinate submanifolds (Corollary~\ref{final2}).

A similar idea was used by Dumitrescu to classify holomorphic
geometries on non-K\"ahler manifolds~\cite{_Dumitrescu:holo_stru_,
_Dumitrescu:holo_met_}, and also much earlier by Bogomolov in his
work \cite{_Bogomolov:tensors_} on holomorphic tensors and
stability.

It has been recently shown by Ishida~\cite{isid} that complex
moment-angle manifolds $\zk$ have the following universal
property: any compact complex manifold with a holomorphic maximal
action of a torus is biholomorphic to the quotient of a
moment-angle manifold by a freely acting closed subgroup of the
torus. (An effective action of $T^k$ on an $m$-dimensional
manifold $M$ is \emph{maximal} if there exists a point $x\in M$
whose stabiliser has dimension $m-k$; the two extreme cases are
the free action of a torus on itself and the half-dimensional
torus action on a toric manifold.)

\smallskip

In Section~2 we review the definition of moment-angle
manifolds~$\zk$ and their realisation by nondegenerate
intersections of Hermitian quadrics in the polytopal case. In
Section~3 we describe complex structures on moment-angle
manifolds. Section~4 contains the main results. We start with
defining a holomorphic foliation~$\mathcal F$ which replaces the
holomorphic fibration $\zk\to V_\Sigma$ over a toric variety.
Theorem~\ref{form2} is the main technical result, which may be of
independent interest. It provides an exact $(1,1)$-form
$\omega_{\mathcal F}$ which is transverse K\"ahler for the
foliation~$\cF$ on a dense open subset. In Theorem~\ref{kahlsub}
we show that, under a generic assumption on the data defining the
complex structure, any K\"ahler submanifold (more generally, a
Fujiki class~$\mathcal C$ subvariety) of $\zk$ is contained in a
leaf of~$\cF$. In Theorem~\ref{nodiv} we show that a generic
moment-angle manifold~$\zk$ has only coordinate divisors. This
result is independent of the existence of a transverse K\"ahler
form. Consequently, a generic~$\zk$ does not admit non-constant
global meromorphic functions (Corollary~\ref{nomerfun}). In
Theorem~\ref{final} and Corollary~\ref{final2} we prove that any
irreducible analytic subset of positive dimension in a generic
moment-angle manifold is a coordinate submanifold.

\medskip

\textbf{Acknowledgements.} We thank the referees for their most
helpful comments and suggestions.

\section{Basic constructions}
Let $\mathcal K$ be an abstract \emph{simplicial complex} on the
set $[m]=\{1,\ldots,m\}$, that is, a collection of subsets
$I=\{i_1,\ldots,i_k\}\subset[m]$ closed under inclusion. We refer
to $I\in\sK$ as (abstract) \emph{simplices} and always assume that
$\varnothing\in\sK$. We do not assume that $\sK$ contains all
singletons $\{i\}\subset[m]$, and refer to $\{i\}\notin\sK$ as a
\emph{ghost vertex}.


Consider the closed unit polydisc in $\C^m$,
\[
  \D^m=\bigl\{ (z_1,\ldots,z_m)\in\C^m\colon |z_i|\le1\bigr\}.
\]
Given $I\subset[m]$, define
\[
  B_I=\bigl\{(z_1,\ldots,z_m)\in
  \D^m\colon |z_j|=1\text{ for }j\notin I\bigl\},
\]
Following~\cite{bu-pa00}, define the \emph{moment-angle complex}
$\zk$ as
\begin{equation}\label{defzk}
  \zk=\bigcup_{I\in\sK}B_I
  =\bigl\{(z_1,\ldots,z_m)\in\D^m\colon\{i\colon|z_i|<1\}\in\sK\bigr\}.
\end{equation}

This is a particular case of the following general construction.

\begin{construction}[polyhedral product]
Let $X$ be a topological space, and $A$ a subspace of~$X$. Given
$I\subset[m]$, set
\begin{equation}\label{xwi}
  (X,A)^I=\bigl\{(x_1,\ldots,x_m)\in X^m\colon x_j\in A\text{ for
  }j\notin I\bigr\}\cong\prod_{i\in I}X\times\prod_{i\notin I}A,
\end{equation}
and define the \emph{polyhedral product} of $(X,A)$ as
\[
  (X,A)^\sK=\bigcup_{I\in\sK}(X,A)^I\subset X^m.
\]
Obviously, if $\sK$ has $k$ ghost vertices, then
$(X,A)^\sK\cong(X,A)^{\mathcal K'}\times A^k$, where $\mathcal K'$
does not have ghost vertices.

We have $\zk=\zk(\D,\T)$, where $\T$ is the unit circle. Another
important particular case is the complement of a \emph{complex
coordinate subspace arrangement}:
\begin{equation}\label{uk}
  U(\sK)=(\C,\C^\times)^\sK=\C^m\big\backslash\bigcup_{\{i_1,\ldots,i_k\}\notin\sK}
  \{\mb z\in\C^m\colon z_{i_1}=\cdots=z_{i_k}=0\},
\end{equation}
where $\C^\times=\C\setminus\{0\}$. We obviously have $\zk\subset
U(\sK)$. Moreover, $\zk$ is a deformation retract of $U(\sK)$ for
every $\sK$~\cite[Theorem~4.7.5]{bu-pa15}.
\end{construction}

According to~\cite[Theorem~4.1.4]{bu-pa15}, $\zk$ is a (closed)
topological manifold of dimension $m+n$ whenever $\sK$ defines a
simplicial subdivision of a sphere $S^{n-1}$; in this case we
refer to $\zk$ as a \emph{moment-angle manifold}.

An important geometric class of simplicial subdivisions of spheres
is provided by \emph{starshaped spheres}, or underlying complexes
of complete simplicial fans.

Let $N_\R\cong\R^n$ be an $n$-dimensional space.  A
\emph{polyhedral cone} $\sigma$ is the set of nonnegative linear
combinations of a finite collection of vectors $\mb a_1,\ldots,\mb
a_k$ in $N_\R$:
\[
  \sigma=\{\mu_1\mb a_1+\cdots+\mu_k\mb
  a_k\colon\mu_i\ge0\}.
\]
A polyhedral cone is \emph{strongly convex} if it contains no line. A strongly convex polyhedral cone is \emph{simplicial} if
it is generated by a subset of a basis of~$N_\R$.

A \emph{fan} is a finite collection
$\Sigma=\{\sigma_1,\ldots,\sigma_s\}$ of strongly convex
polyhedral cones in $N_\R$ such that every face of a cone in
$\Sigma$ belongs to $\Sigma$ and the intersection of any two cones
in $\Sigma$ is a face of each. A fan $\Sigma$ is \emph{simplicial}
if every cone in $\Sigma$ is simplicial. A fan
$\Sigma=\{\sigma_1,\ldots,\sigma_s\}$ is called \emph{complete} if
$\sigma_1\cup\cdots\cup\sigma_s=N_\R$.

{\samepage A simplicial fan $\Sigma$ in $N_\R$ is therefore
determined by two pieces of data:
\begin{itemize}
\item[--] a simplicial complex $\sK$ on $[m]$;
\item[--] a configuration of vectors $\mb a_1,\ldots,\mb a_m$ in
$N_\R$ such that the subset $\{\mb a_i\colon i\in I\}$ is linearly
independent for any simplex $I\in\sK$.
\end{itemize}}
Then for each $I\in\sK$ we can define the simplicial cone
$\sigma_I$ spanned by $\mb a_i$ with $i\in I$. The ``bunch of
cones'' $\{\sigma_I\colon I\in\sK\}$ patches into a fan $\Sigma$
whenever any two cones $\sigma_I$ and $\sigma_J$ intersect in a
common face (which has to be $\sigma_{I\cap J}$). Equivalently,
the relative interiors of cones $\sigma_I$ are pairwise
non-intersecting. Under this condition, we say that the data
$\{\sK;\mb a_1,\ldots,\mb a_m\}$ \emph{define a fan}~$\Sigma$, and
$\sK$ is the \emph{underlying simplicial complex} of the
fan~$\Sigma$. Note that a simplicial fan $\Sigma$ is complete if
and only if its underlying simplicial complex $\sK$ defines a
simplicial subdivision of~$S^{n-1}$; this simplicial subdivision
is obtained by intersecting $\Sigma$ with a unit sphere in~$N_\R$.

Here is an important point in which our approach to fans differs
from the standard one adopted in toric geometry: since we allow
ghost vertices in~$\sK$, we do not require that each vector $\mb
a_i$ spans a one-dimensional cone of~$\Sigma$. The vector $\mb
a_i$ corresponding to a ghost vertex $\{i\}\in[m]$ may be zero.


In the case when $\{\sK;\mb a_1,\ldots,\mb a_m\}$ define a
complete fan~$\Sigma$, the topological manifold $\zk$ can be
smoothed by means of the following procedure~\cite{pa-us12}:

\begin{construction}
Let $\mb a_1,\ldots,\mb a_m$ be a set of vectors. We consider the
linear map
\begin{equation}\label{lambdar}
  A\colon\R^m\to N_\R,\quad\mb e_i\mapsto\mb a_i,
\end{equation}
where $\mb e_1,\ldots,\mb e_m$ is the standard basis of~$\R^m$.
Let
\[
  \R^m_>=\{(y_1,\ldots,y_m)\in\R^m\colon y_i>0\}
\]
be the multiplicative group of $m$-tuples of positive real
numbers, and define
\begin{equation}\label{rsigma}
\begin{aligned}
  R&=\exp(\Ker A)=\bigl\{\bigl(e^{y_1},\ldots,e^{y_m}\bigr)
  \colon(y_1,\ldots,y_m)\in\Ker A\bigr\}\\
  &=\bigl\{(t_1,\ldots,t_m)\in\R^m_>\colon
  \prod_{i=1}^mt_i^{\langle\mb a_i,\mb u\rangle}=1
  \text{ for all }\mb u\in N^*_\R\bigr\}.
\end{aligned}
\end{equation}

We let $\R^m_>$ act on $\C^m$ by coordinatewise multiplications.
The subspace $U(\sK)$~\eqref{uk} is $\R^m_>$-invariant for
any~$\sK$, and there is an action of $R\subset\R^m_>$ on $U(\sK)$
by restriction.
\end{construction}

\begin{theorem}[{\cite[Theorem~2.2]{pa-us12}}]\label{zksmooth}
Assume that data $\{\sK;\mb a_1,\ldots,\mb a_m\}$ define a
complete fan~$\Sigma$ in~$N_\R\cong\R^n$. Then
\begin{itemize}
\item[(a)]
the group $R\cong\R^{m-n}$ acts on $U(\sK)$ freely and properly,
so the quotient $U(\sK)/R$ is a smooth $(m+n)$-dimensional
manifold;

\item[(b)] $U(\sK)/R$ is homeomorphic to~$\zk$.
\end{itemize}
\end{theorem}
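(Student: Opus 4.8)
The plan is to realise $U(\sK)$ as a product $R\times\zk$ along the orbits, via the orbit map $\Psi\colon R\times\zk\to U(\sK)$, $\Psi(t,\mb z)=t\cdot\mb z$, so that (a) and (b) both follow. First, completeness of $\Sigma$ forces $\mb a_1,\dots,\mb a_m$ to span $N_\R$, hence $A$ is onto and $R=\exp(\Ker A)\cong\R^{m-n}$. For (a) I would work chartwise: for each maximal simplex $I\in\sK$ (all of size $n$, since a complete fan has $|\sK|\cong S^{n-1}$, a pure complex) put $U_I=\{\mb z:z_j\neq0\text{ for }j\notin I\}$; these are $R$-invariant and cover $U(\sK)$. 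Because $\{\mb a_i:i\in I\}$ is a basis, the coordinate projection $\Ker A\to\R^{[m]\setminus I}$ is injective, hence an isomorphism of $(m-n)$-dimensional spaces, so each $\mb z\in U_I$ has a unique $t(\mb z)\in R$, depending continuously on $\mb z$, with $|t(\mb z)_jz_j|=1$ for all $j\notin I$. This yields an $R$-equivariant homeomorphism $U_I\cong R\times S_I$ with slice $S_I=\{\mb z\in U_I:|z_j|=1,\ j\notin I\}\cong(S^1)^{m-n}\times\C^{\,n}$. The same computation shows freeness: if $\exp(\mb y)$ fixes $\mb z$ then $y_i=0$ whenever $z_i\neq0$, and $\sum_i y_i\mb a_i=0$ with the remaining $\mb a_i$ independent forces $\mb y=0$. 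The charts exhibit $U(\sK)\to U(\sK)/R$ as a locally trivial $R$-bundle; the fan condition (the relative interiors of the cones $\sigma_I$ are disjoint) is exactly what makes the quotient Hausdorff, so the action is proper and $U(\sK)/R$ is a smooth $(m+n)$-manifold, giving (a).

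Next I would prove that each $R$-orbit meets $\zk$ in exactly one point (assume first all $z_i\neq0$; the general case is addressed below). The key object is the $R$-invariant logarithmic moment vector $\mb p(\mb z)=\sum_i(\log|z_i|)\mb a_i\in N_\R$; it is invariant because $\log|t_iz_i|-\log|z_i|=y_i$ and $\sum_i y_i\mb a_i=0$ for $t=\exp(\mb y)\in R$. A point $\mb w$ lies in $\zk$ precisely when $\log|w_i|\le0$ for all $i$ and the strict set $J=\{i:|w_i|<1\}$ is a simplex; setting $\gamma_i=-\log|w_i|$ this reads $-\mb p(\mb w)=\sum_{i\in J}\gamma_i\mb a_i$ with all $\gamma_i>0$, i.e.\ $-\mb p(\mb w)$ lies in the relative interior of the cone $\sigma_J$. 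Thus landing in $\zk$ along the orbit of $\mb z$ is the same as locating the cone of $\Sigma$ whose relative interior contains the orbit-constant vector $-\mb p(\mb z)$: existence of such a cone is exactly completeness, $N_\R=\bigcup_\sigma\sigma$, and its uniqueness is exactly the fan axiom that distinct cones have disjoint relative interiors. Once $\sigma_J$ is fixed the numbers $\gamma_i$, hence the moduli $|w_i|$, are determined by the independence of $\{\mb a_i:i\in J\}$, while the phases $\arg w_i=\arg z_i$ are unchanged by the positive-real action; so the representative $\mb w$ is unique, and reading off the cone coordinates produces it, giving existence.

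With this, the composite $\zk\hookrightarrow U(\sK)\to U(\sK)/R$ is a continuous bijection. Since $\zk$ is compact (a finite union of the compact sets $B_I\subset\D^m$) and $U(\sK)/R$ is Hausdorff by (a), a continuous bijection from a compact space to a Hausdorff space is a homeomorphism, which is (b).

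The hard part is the existence/uniqueness step of the previous paragraph when $\mb z$ has vanishing coordinates: then $\mb p(\mb z)$ is infinite and the argument must be transported to the quotient $N_\R/\langle\mb a_i:z_i=0\rangle$, using the standard fact that the star of the cone $\sigma_{\{i:z_i=0\}}$ projects to a complete simplicial fan there; the same dictionary (covering $\Rightarrow$ existence, disjoint relative interiors $\Rightarrow$ uniqueness) then applies in the smaller fan. Checking this reduction carefully, together with the continuity of the assignment $\mb z\mapsto(t,\mb w)$ if one prefers to establish the homeomorphism $\Psi$ directly rather than through the compactness argument, is where the real work lies; everything else is linear algebra over the two defining properties of a fan.
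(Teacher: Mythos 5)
The paper itself gives no proof of this statement---it is quoted verbatim from \cite[Theorem~2.2]{pa-us12}---so your proposal can only be judged on its own merits, and there it has one genuine gap: properness of the $R$-action (equivalently, Hausdorffness of $U(\sK)/R$) is asserted rather than proved. Your chart decomposition $U_I\cong R\times S_I$ over maximal simplices and your freeness argument are both correct, but a free action admitting local slices over an invariant open cover need not be proper, and its quotient need not be Hausdorff: the standard example is $\R$ acting on $\R^2\setminus\{0\}$ by $t\cdot(x,y)=(e^tx,e^{-t}y)$, which is free and locally trivial over the two coordinate charts, yet has a non-Hausdorff orbit space and is not proper. Saying that ``the fan condition is exactly what makes the quotient Hausdorff'' identifies the right ingredient but is not an argument: in your first paragraph the disjointness of relative interiors is never actually used, and nothing in the local product structure forces separation of orbits lying over different charts.

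The gap then propagates: your proof of (b) runs the continuous bijection $\zk\to U(\sK)/R$ through the compact-to-Hausdorff argument, so it consumes the very Hausdorffness that was never established, while you explicitly defer the continuity of the slice assignment $\mb z\mapsto s(\mb z)$ (the unique $s(\mb z)\in R$ with $s(\mb z)\cdot\mb z\in\zk$) as optional ``real work''. It is not optional; it is the missing step. Once continuity of $s$ is proved---using the piecewise-linear continuity, across faces of the complete fan, of the cone coordinates of the invariant vector $-\mb p(\mb z)$, together with the star/link reduction at points with vanishing coordinates---the map $\mb z\mapsto s(\mb z)\cdot\mb z$ becomes a continuous $R$-invariant retraction of $U(\sK)$ onto $\zk$ that separates orbits. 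That gives Hausdorffness of the quotient; properness then does follow from your local triviality (the orbit equivalence relation is the preimage of the closed diagonal, and the action map identifies $R\times U(\sK)$ with it homeomorphically), and (a) and (b) both fall out. Your second paragraph---the dictionary between points of $\zk$ on an orbit and the unique cone whose relative interior contains $-\mb p(\mb z)$, with existence from completeness and uniqueness from disjointness of relative interiors, plus the star-of-$\sigma_Z$ reduction when coordinates vanish---is correct and is indeed the heart of the matter; but as written the point-set topology is circular, and that is precisely where such quotient constructions can fail.
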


\begin{remark}
The group $R$ depends on $\mb a_1,\ldots,\mb a_m$. However, we
expect that the smooth structure on $\zk$ as the quotient
$U(\sK)/R$ does not depend on this choice, i.e. smooth manifolds
corresponding to different $R$ are diffeomorphic.
\end{remark}

An important class of complete fans arises from convex polytopes:

\begin{construction}[Normal fan]
Let $P$ be a convex polytope in the dual space $N_\R^*\cong\R^n$.
It can be written as a bounded intersection of $m$ halfspaces:
\begin{equation}
\label{ptope}
  P=\bigl\{\mb u\in N_\R^*\colon\langle\mb a_i,\mb u\rangle+b_i\ge0\quad\text{for }
  i=1,\ldots,m\bigr\},
\end{equation}
where $\mb a_i\in N_\R$ and $b_i\in\R$.

We assume that the intersection $P\cap\{\langle\mb a_i,\mb
u\rangle+b_i=0\}$ with the $i$th hyperplane is either empty or
$(n-1)$-dimensional. In the latter case, $F_i=P\cap\{\langle\mb
a_i,\mb u\rangle+b_i=0\}$ is a \emph{facet} of~$P$. We allow
$P\cap\{\langle\mb a_i,\mb u\rangle+b_i=0\}$ to be empty for
technical reasons explained below, in this case the $i$th
inequality $\langle\mb a_i,\mb u\rangle+b_i\ge0$ is
\emph{redundant}. Our assumption also implies that $P$ has full
dimension~$n$.

A \emph{face} of $P$ is a nonempty intersection of facets. Given a
face $Q\subset P$, define
\[
  \sigma_Q=\{\mb x\in N_\R\colon
  \langle\mb x,\mb u'\rangle\le\langle\mb x,\mb u\rangle
  \text{ for all $\mb u'\in Q$ and $\mb u\in P$}\}.
\]
Each $\sigma_Q$ is a strongly convex cone, and the collection
$\{\sigma_Q\colon Q\text{ is a face of }P\}\cup\{\bf0\}$ is a
complete fan in~$N_\R$, called the \emph{normal fan} of~$P$ and
denoted by~$\Sigma_P$.

An $n$-dimensional polytope $P$ is called \emph{simple} if exactly
$n$ facets meet at each vertex of~$P$. In a simple polytope, any
face $Q$ of codimension $k$ is uniquely written as an intersection
of $k$ facets $Q=F_{i_1}\cap\cdots\cap F_{i_k}$. Furthermore, the
cone $\sigma_Q$ is simplicial and is generated by the $k$ normal
vectors $\mb a_{i_1},\ldots,\mb a_{i_k}$ to the corresponding
facets.

Define the \emph{nerve complex} of a polytope $P$ with $m$ facets
by
\begin{equation}\label{simpo}
  \sK_P=\{I\subset[m]\colon \bigcap_{i\in I} F_i\ne\varnothing\}.
\end{equation}
If $P$ is simple then $\Sigma_P$ is the simplicial fan defined by
the data $\{\sK_P;\mb a_1,\ldots,\mb a_m\}$.
\end{construction}

\begin{remark}
Not every complete fan is a normal fan of a polytope
(see~\cite[\S3.4]{fult93} for an example). Even the weaker form of
this statement fails: there are complete simplicial fans $\Sigma$
whose underlying complexes $\sK$ are not combinatorially
equivalent to the boundary of a convex polytope. The
\emph{Barnette sphere} (a non-polytopal triangulation of $S^3$
with 8 vertices) provides a
counterexample~\cite[\S{}III.5]{ewal96}.
\end{remark}

In the case when $\sK=\sK_P$ for a simple polytope~$P$, there is
an alternative way to give $\zk$ a smooth structure, by writing it
as a nondegenerate intersection of Hermitian quadrics:

\begin{construction}\label{dist}
Let $P$ be a simple convex polytope given by~\eqref{ptope}. We
consider the affine map
\[
  i_P\colon N^*_\R\to\R^m,\quad\mb u\mapsto
  \bigl(\langle\mb a_1,\mb u\rangle+b_1,\ldots,
  \langle \mb a_m,\mb u\rangle+b_m\bigr),
\]
or $i_P(\mb u)=A^*\mb u+\mb b$, where $A^*$ is the dual map
of~\eqref{lambdar} and $\mb b\in\R^m$ is the vector with
coordinates~$b_i$. The image $i_P(N_\R^*)$ is an affine $n$-plane
in $\R^m$, which can be written by $m-n$ linear equations:
\begin{equation}\label{iabrn}
\begin{aligned}
  i_P(N_\R^*)&=\{\mb y\in\R^m\colon\mb y=A^*\mb u+\mb b\quad
  \text{for some }\mb u\in N_\R^*\}\\
  &=\{\mb y\in\R^m\colon\varGamma\mb y=\varGamma\mb b\},
\end{aligned}
\end{equation}
where $\varGamma=(\gamma_{jk})$ is an $(m-n)\times m$-matrix whose
rows form a basis of linear relations between the vectors~${\mb
a}_i$. That is, $\varGamma$ is of full rank and satisfies the
identity $\varGamma A^*=0$.

The map $i_P$ embeds $P\subset\R^n$ into the positive orthant
$\R^m_\ge$. We define the space $\mathcal Z_P$ from the
commutative diagram
\begin{equation}\label{cdiz}
\begin{CD}
  \mathcal Z_P @>i_{\mathcal Z}>>\C^m\\
  @VVV\hspace{-0.2em} @VV\mu V @.\\
  P @>i_P>> \R^m_\ge
\end{CD}
\end{equation}
where $\mu(z_1,\ldots,z_m)=(|z_1|^2,\ldots,|z_m|^2)$. The torus
$\T^m$ acts on $\mathcal Z_P$ with quotient $P$, and $i_{\mathcal
Z}$ is a $\T^m$-equivariant embedding.

By replacing $y_k$ by $|z_k|^2$ in the equations~\eqref{iabrn}
defining the affine plane $i_P(N_\R^*)$ we obtain that $\mathcal
Z_P$ embeds into $\C^m$ as the set of common zeros of $m-n$ real
quadratic equations (\emph{Hermitian quadrics}):
\begin{equation}\label{zpqua}
  i_{\mathcal Z}(\mathcal Z_P)=\Bigl\{\mb z\in\C^m\colon\sum_{k=1}^m\gamma_{jk}|z_k|^2=
  \sum_{k=1}^m\gamma_{jk}b_k\quad\text{for }1\le j\le m-n\Bigr\}.
\end{equation}
We identify $\zp$ with its image $i_{\mathcal Z}(\zp)$ and
consider $\zp$ as a subset of~$\C^m$.
\end{construction}

\begin{theorem}[{\cite{bu-pa00,pa-us12}}]\label{Rzkzp}
Let $P$ be a simple polytope given by~\eqref{ptope} and let
$\sK=\sK_P$ be its nerve simplicial complex~\eqref{simpo}. Then
\begin{itemize}
\item[(a)] $\zp\subset U(\sK)$;
\item[(b)] $\zp$ is a smooth submanifold in $\C^m$ of dimension~$m+n$;
\item[(c)] there is a $\T^m$-equivariant homeomorphism
$\zk\cong\zp$.
\end{itemize}
\end{theorem}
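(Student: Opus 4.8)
The plan is to treat the three parts in order, exploiting the description of $\zp$ as a $\mu$-preimage. Throughout write $\mu(\mb z)=(|z_1|^2,\dots,|z_m|^2)$, so that by~\eqref{zpqua} and~\eqref{iabrn} we have $\zp=\mu^{-1}\bigl(i_P(N_\R^*)\bigr)=\mu^{-1}\bigl(i_P(P)\bigr)$; the last equality holds because $\mu$ takes values in $\R^m_\ge$ and $i_P(N_\R^*)\cap\R^m_\ge=i_P(P)$ by~\eqref{ptope}. Since the normals $\mb a_i$ span $N_\R$, the dual map $A^*$, and hence $i_P$, is injective, so every $\mb z\in\zp$ determines a unique $\mb u=\mb u(\mb z)\in P$ with $|z_k|^2=\langle\mb a_k,\mb u\rangle+b_k$. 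For part~(a) I would first record that $U(\sK)=\{\mb z\in\C^m\colon\{i\colon z_i=0\}\in\sK\}$, immediate from~\eqref{uk} and the closure of $\sK$ under subsets. Fixing $\mb z\in\zp$ with associated $\mb u$, the relation $|z_k|^2=\langle\mb a_k,\mb u\rangle+b_k$ shows $z_k=0$ exactly when $\mb u\in F_k$, so $\{i\colon z_i=0\}=\{i\colon\mb u\in F_i\}$; as $\mb u\in\bigcap_{\{i:\mb u\in F_i\}}F_i\ne\varnothing$, this index set is a simplex of $\sK_P=\sK$ by~\eqref{simpo}, whence $\mb z\in U(\sK)$.

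For part~(b), write the defining functions $f_j(\mb z)=\sum_k\gamma_{jk}|z_k|^2-\sum_k\gamma_{jk}b_k$, so $\zp=\bigcap_j f_j^{-1}(0)$, and prove the intersection is transverse: the regular value theorem then makes $\zp$ a smooth submanifold of real codimension $m-n$, of dimension $2m-(m-n)=m+n$. Since $df_j=\sum_k\gamma_{jk}\,d(|z_k|^2)$ and the one-forms $d(|z_k|^2)$ involve disjoint coordinates and vanish exactly when $z_k=0$, a relation $\sum_j\lambda_j\,df_j=0$ at $\mb z$ forces $\sum_j\lambda_j\gamma_{jk}=0$ for every $k$ with $z_k\ne0$; equivalently the vector $\mb c=(\lambda_1,\dots,\lambda_{m-n})\varGamma$ is supported on $\omega:=\{k\colon z_k=0\}$. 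By the defining property of $\varGamma$, whose rows are linear relations among the $\mb a_k$, the vector $\mb c$ is again such a relation, so $\sum_{k\in\omega}c_k\mb a_k=\sum_k c_k\mb a_k=0$. But $\omega\in\sK$ by part~(a) and $P$ is simple, so $\{\mb a_k\colon k\in\omega\}$ is linearly independent; hence $\mb c=0$, and then $\lambda=0$ because $\varGamma$ has full rank. This transversality is the technical heart of the statement.

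For part~(c), I would identify both spaces with one $\T^m$-identification space over the orbit polytope. The map $\mu_P:=i_P^{-1}\circ\mu\colon\zp\to P$ is continuous, $\T^m$-invariant and surjective; over the relative interior of a face $Q=\bigcap_{i\in\omega}F_i$ its fibre is $\{\mb z\colon z_i=0\ (i\in\omega),\ |z_i|>0\ (i\notin\omega)\}\cong\T^m/\T^{\omega}$, where $\T^{\omega}$ is the coordinate subtorus in the $\omega$-coordinates. Hence $\zp\cong(\T^m\times P)/{\sim}$, with $(\mb t,\mb u)\sim(\mb t',\mb u)$ iff $\mb t^{-1}\mb t'\in\T^{\omega(\mb u)}$ and $\omega(\mb u)=\{i\colon\mb u\in F_i\}$. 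Running the same analysis for $\mu$ on $\zk\subset\D^m$ presents $\zk$ as $(\T^m\times C_\sK)/{\sim}$ with the identical collapsing rule, where $C_\sK=\{\mb y\in[0,1]^m\colon\{i\colon y_i<1\}\in\sK\}$ is the orbit space $\zk/\T^m$. It then suffices to produce a facet-preserving homeomorphism $h\colon P\to C_\sK$, that is, one with $\{i\colon h(\mb u)_i=0\}=\omega(\mb u)$; then $h\times\mathrm{id}_{\T^m}$ descends to the desired $\T^m$-equivariant homeomorphism $\zp\cong\zk$.

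The hard part will be this last step. Both $P$ and $C_\sK$ are compact $n$-manifolds with corners whose facet posets are canonically identified through $\sK=\sK_P$, and one must realise this combinatorial isomorphism by an actual homeomorphism respecting the facet labels. Here I would invoke the standard toric-topology identification of the orbit space of $\zk$ with a simple polytope via its cubical subdivision~\cite{bu-pa02}; concretely, $h$ may be built by a face-compatible radial rescaling $z_k\mapsto\rho_k(\mb u)\,z_k/|z_k|$ that pushes each quadric fibre into $\D^m$ while preserving which coordinates vanish. Parts~(a) and~(b), by contrast, are essentially direct once the reduction to linear independence of the normals $\{\mb a_k\colon k\in\omega\}$ is in place.
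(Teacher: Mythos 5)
Your proposal is correct. Parts (a) and (b) run along the same lines as the paper's sketch; in fact for (b) you carry out in full the computation that the paper compresses into ``one needs to check that simplicity translates into nondegeneracy'': the observation that a relation $\sum_j\lambda_j\,df_j=0$ forces $\mb c=\lambda\varGamma$ to be a linear relation among the $\mb a_k$ supported on the zero-set $\omega\in\sK_P$, which vanishes by simplicity, is exactly the right argument. Part (c), however, takes a genuinely different route from the paper. The paper identifies both $\zk$ and $\zp$ with the quotient $U(\sK)/R$: each of these compact subsets meets every orbit of the group $R$ from~\eqref{rsigma} at a single point (citing \cite{pa-us12} for $\zk$ and \cite{audi91} for $\zp$), and since $R\subset\R^m_>$ commutes with $\T^m$ inside $(\C^\times)^m$, the resulting homeomorphism $\zk\cong U(\sK)/R\cong\zp$ is automatically $\T^m$-equivariant. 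You instead present both spaces as identification spaces $(\T^m\times P)/\!\sim$ and $(\T^m\times C_\sK)/\!\sim$ over their $\T^m$-orbit spaces and reduce everything to a facet-preserving homeomorphism $P\cong C_\sK$, supplied by the cubical subdivision of a simple polytope from~\cite{bu-pa02}. This is the classical toric-topology argument: it avoids the $R$-action and its properness altogether, and makes the orbit-space combinatorics explicit, but it leans on the (standard yet nontrivial) combinatorial homeomorphism $P\cong\cc(\sK_P)$, whereas the paper's route is shorter, gives equivariance for free, and is reused almost verbatim for the $C$-action in Theorem~\ref{zpcom}. One small point you should make explicit: the step ``hence $\zp\cong(\T^m\times P)/\!\sim$'' is justified by the continuous surjection $\T^m\times P\to\zp$, $(\mb t,\mb u)\mapsto\bigl(t_k(\langle\mb a_k,\mb u\rangle+b_k)^{1/2}\bigr)_k$, whose fibres are precisely the equivalence classes; compactness of the source and Hausdorffness of the target then give the homeomorphism, and the same remark applies to the presentation of~$\zk$.
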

\begin{proof}[Sketch of proof]
(a) By~\eqref{cdiz}, if a point $\mb z=(z_1,\ldots,z_m)\in\zp$ has
its coordinates $z_{i_1},\ldots,z_{i_k}$ vanishing, then
$F_{i_1}\cap\cdots\cap F_{i_k}\ne\varnothing$. Hence the claim
follows from the definition of $U(\sK)$ and $\sK$, see~\eqref{uk}
and~\eqref{simpo}.

(b) One needs to check that the condition of $P$ being simple
translates into the condition of~\eqref{zpqua} being
nondegenerate.

(c) We have the quotient projection $U(\sK)\to U(\sK)/R$ by the
action of $R$~\eqref{rsigma}, and both compact subsets $\zk\subset
U(\sK)$, $\zp\subset U(\sK)$ intersect each $R$-orbit at a single
point. For $\zk$ this is proved in~\cite[Theorem~2.2]{pa-us12},
and for $\zp$ in~\cite[Chapter~VI, Proposition~3.1.1]{audi91}.
Furthermore, $\zp$ is transverse to each orbit.
\end{proof}


\section{Complex structures}
\label{_complex_ana_stru_Section_}

Bosio and Meersseman~\cite{bo-me06} showed that some moment-angle
manifolds admit complex structures. They only considered
moment-angle manifolds corresponding to convex polytopes, and
identified them with \emph{LVM-manifolds}~\cite{meer00,me-ve04} (a
class of non-K\"ahler complex manifolds whose underlying smooth
manifolds are intersections of Hermitian quadrics in a complex
projective space). A modification of this construction was
considered in~\cite{pa-us12}, where a complex structure was
defined on any moment-angle manifold $\zk$ corresponding to a
complete simplicial fan. Similar results were obtained by
Tambour~\cite{tamb12}.

In this section we assume that $m-n$ is even. We can always
achieve this by adding a ghost vertex with any corresponding
vector to our data $\{\sK;\mb a_1,\ldots,\mb a_m\}$; topologically
this results in multiplying $\zk$ by a circle. We set
$\ell=\frac{m-n}2$.

We identify $\C^m$ (as a real vector space) with $\R^{2m}$ using
the map
\[
  (z_1,\ldots,z_m)\mapsto(x_1,y_1,\ldots,x_m,y_m),
\]
where $z_k=x_k+iy_k$, and consider the $\R$-linear map
\[
  \Re\colon\C^m\to\R^m,\qquad (z_1,\ldots,z_m)\mapsto(x_1,\ldots,x_m).
\]

In order to define a complex structure on the quotient $\zk\cong
U(\sK)/R$ we replace the action of $R$ by the action of a
holomorphic subgroup $C\subset(\C^\times)^m$ by means of the
following construction.

\begin{construction}\label{psi}
Let $\mb a_1,\ldots,\mb a_m$ be a configuration of vectors that
span $N_\R\cong\R^n$. Assume further that $m-n=2\ell$ is even.
Some of the $\mb a_i$'s may be zero. Consider the map
\[
  A\colon\R^m\to N_\R, \quad \mb e_i\mapsto\mb a_i.
\]

We choose a complex $\ell$-dimensional subspace in $\C^m$ which
projects isomorphically onto the real $(m-n)$-dimensional subspace
$\Ker A\subset\R^m$.  More precisely, choose a $\C$-linear map
$\varPsi\colon \C^\ell\to\C^m$ satisfying the two conditions:
\begin{itemize}
\item[(a)] the composite map
$\C^\ell\stackrel{\varPsi}\longrightarrow\C^m
\stackrel{\Re}\longrightarrow\R^m$ is a monomorphism;

\item[(b)] the composite map
$\C^\ell\stackrel{\varPsi}\longrightarrow\C^m
\stackrel{\Re}\longrightarrow\R^m \stackrel{A}\longrightarrow
N_\R$ is zero.
\end{itemize}
Set $\mathfrak c=\varPsi(\C^l)$. Then the two conditions above are
equivalent to the following:
\begin{itemize}
\item[$\mathrm{(a')}$] $\mathfrak c\cap\overline{\mathfrak c}=\{\mathbf0\}$;
\item[$\mathrm{(b')}$] $\mathfrak c\subset\Ker(A_\C\colon\C^m\to N_\C)$,
\end{itemize}
where $\overline{\mathfrak c}$ is the complex conjugate space and
$A_\C\colon\C^m\to N_\C$ is the complexification of the real map
$A\colon\R^m\to N_\R$. Consider the following commutative diagram:
\begin{equation}\label{cdiag}
\begin{CD}
  \C^\ell @>\varPsi>> \C^m @>\Re>> \R^m @>A>> N_\R\\
  @. @VV\exp V @VV\exp V\\
  \ @. (\C^\times )^m@>|\,\cdot|>> \R^m_>
\end{CD}
\end{equation}
where the vertical arrows are the componentwise exponential maps,
and $|\cdot|$ denotes the map
$(z_1,\ldots,z_m)\mapsto(|z_1|,\ldots,|z_m|)$. Now set
\begin{equation}\label{csigma}
  C=\exp\mathfrak c
  =\bigl\{(e^{w_1},\ldots,e^{w_m})\in(\C^\times)^m\colon \mb
  w=(w_1,\ldots,w_m)\in\varPsi(\C^\ell)\bigr\}.
\end{equation}
Then $C\cong\C^\ell$ is a complex (but not algebraic) subgroup
in~$(\C^\times)^m$, and $\mathfrak c$ is its Lie algebra. There is
a holomorphic action of $C$ on $\C^m$ and $U(\sK)$ by restriction.
Furthermore, condition~$\mathrm{(a')}$ above implies that $C$ is
closed in~$(\C^\times)^m$.
\end{construction}


\begin{example}\label{2torus}
Let $\mb a_1,\ldots,\mb a_m$ be the configuration of $m=2\ell$
zero vectors. We supplement it by the empty simplicial complex
$\sK$ on $[m]$ (with $m$ ghost vertices), so that the data
$\{\sK;\mb a_1,\ldots,\mb a_m\}$ define a complete fan in
0-dimensional space. Then $A\colon\R^m\to\R^0$ is a zero map, and
condition~(b) of Construction~\ref{psi} is void. Condition~(a)
means that $\C^\ell\stackrel{\varPsi}\longrightarrow\C^{2\ell}
\stackrel{\mathrm{Re}}\longrightarrow\R^{2\ell}$ is an isomorphism
of real spaces.

Consider the quotient $(\C^\times)^m/C$ (note that
$(\C^\times)^m=U(\sK)$ in our case). The exponential map
$\C^m\to(\C^\times)^m$ identifies $(\C^\times)^m$ with the
quotient of $\C^m$ by the imaginary lattice $\Gamma=\Z\langle2\pi
i\mb e_1,\ldots,2\pi i\mb e_m\rangle$. Condition~(a) implies that
the projection $p\colon\C^m\to\C^m/\mathfrak c$ is nondegenerate
on the imaginary subspace of~$\C^m$. In particular, $p\,(\Gamma)$
is a lattice of rank $m=2\ell$ in $\C^m/\mathfrak c\cong\C^\ell$.
Therefore,
\[
  (\C^\times)^m/C\cong\bigl(\C^m/\Gamma\bigr)/\mathfrak c
  =\bigl(\C^m/\mathfrak c\bigr)
  \big/p\,(\Gamma)\cong\C^\ell/\Z^{2\ell}
\]
is a complex compact $\ell$-dimensional torus.

Any complex torus can be obtained in this way. Indeed, let
$\varPsi\colon\C^\ell\to\C^m$ be given by an
$2\ell\times\ell$-matrix $\begin{pmatrix}-B\\I\end{pmatrix}$ where
$I$ is the unit matrix and $B$ is a square matrix of size~$\ell$.
Then $p\colon\C^m\to\C^m/\mathfrak c$ is given by the matrix
$(I\,B)$ in appropriate bases, and $(\C^\times)^m/C$ is isomorphic
to the quotient of $\C^\ell$ by the lattice $\Z\langle\mb
e_1,\ldots,\mb e_\ell,\mb b_1,\ldots,\mb b_\ell\rangle$, where
$\mb b_k$ is the $k$th column of~$B$. (Condition~(b) implies that
the imaginary part of $B$ is nondegenerate.)

For example, if $\ell=1$, then $\varPsi\colon\C\to\C^2$ is given
by $w\mapsto(\beta w,w)$ for some $\beta\in\C$, so that the
subgroup~\eqref{csigma} is
\[
  C=\{(e^{\beta w},e^w)\}\subset(\C^\times )^2.
\]
Condition~(a) implies that $\beta\notin\R$. Then
$\exp\varPsi\colon\C\to(\C^\times )^2$ is an embedding, and
\[
  (\C^\times )^2/C\cong\C/(\Z\oplus\beta\Z)
\]
is a complex 1-dimensional torus with the lattice parameter
$\beta\in\C$.
\end{example}

\begin{theorem}[{\cite[Theorem~3.3]{pa-us12}}]\label{zkcomplex}\
Assume that data $\{\sK;\mb a_1,\ldots,\mb a_m\}$ define a
complete fan~$\Sigma$ in~$N_\R\cong\R^n$, and $m-n=2\ell$. Let
$C\cong\C^\ell$ be the group defined by~\eqref{csigma}. Then
\begin{itemize}
\item[(a)]
the holomorphic action of $C$ on $U(\sK)$ is free and proper, and
the quotient $U(\sK)/C$ has the structure of a compact complex
manifold;

\item[(b)] $U(\sK)/C$ is diffeomorphic to~$\zk$.
\end{itemize}
Therefore, $\zk$ has a complex structure, in which each element of
$\T^m$ acts by a holomorphic transformation.
\end{theorem}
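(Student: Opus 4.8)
The plan is to reduce everything to the properties of the real group $R$ already supplied by Theorem~\ref{zksmooth}, using the modulus homomorphism $|\cdot|\colon(\C^\times)^m\to\R^m_>$ as the bridge between $C$ and $R$. The central observation is that $\Re(\mathfrak c)=\Ker A$ as real subspaces of $\R^m$: condition~(b) gives $A(\Re(\mathfrak c))=0$, so $\Re(\mathfrak c)\subseteq\Ker A$, while condition~(a) says $\Re|_{\mathfrak c}$ is injective, so $\dim_\R\Re(\mathfrak c)=2\ell=m-n=\dim_\R\Ker A$, forcing equality. Since $|\exp w|=\exp(\Re w)$ coordinatewise, this yields $|\cdot|(C)=\exp(\Re\mathfrak c)=\exp(\Ker A)=R$. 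Moreover $\mathfrak c\cap i\R^m=\{\mathbf 0\}$, because a purely imaginary $w\in\mathfrak c$ satisfies $\overline w=-w\in\mathfrak c\cap\overline{\mathfrak c}=\{\mathbf 0\}$ by~(a'). Hence the kernel $C\cap\T^m$ of $|\cdot||_C$ corresponds to $\mathfrak c\cap i\R^m$ and is trivial, so I conclude that $|\cdot|\colon C\to R$ is an isomorphism of Lie groups, with smooth inverse $\kappa^{-1}\colon R\to C$.

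Next I would prove that the $C$-action on $U(\sK)$ is free and proper. For freeness, suppose $\gamma=\exp w$ with $w\in\mathfrak c$ fixes $z\in U(\sK)$. Then $e^{w_i}=1$, i.e. $w_i\in 2\pi i\Z$, for every $i$ with $z_i\ne0$; writing $I=\{i\colon z_i=0\}\in\sK$, this means $\Re w$ is supported on $I$, so $\Re w\in\Ker A\cap\R^I$ where $\R^I=\langle\mb e_i\colon i\in I\rangle$. But $\{\mb a_i\colon i\in I\}$ is linearly independent (the simplicial/fan condition), so $A|_{\R^I}$ is injective and $\Ker A\cap\R^I=\{\mathbf 0\}$; thus $\Re w=0$, whence $w\in\mathfrak c\cap i\R^m=\{\mathbf 0\}$ and $\gamma=1$. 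For properness I would use compactness of $\T^m$ and closedness of $C$ in $(\C^\times)^m$ (noted in Construction~\ref{psi}): if $z_k\to z$ and $\gamma_k z_k\to z'$ with $\gamma_k\in C$, write $\gamma_k=\phi_k r_k$ with $\phi_k=\gamma_k/|\gamma_k|\in\T^m$ and $r_k=|\gamma_k|\in R$; passing to a subsequence with $\phi_k\to\phi\in\T^m$, we get $r_k z_k=\phi_k^{-1}(\gamma_k z_k)\to\phi^{-1}z'$, so by properness of the $R$-action (Theorem~\ref{zksmooth}) $r_k$ subconverges in $R$, hence $\gamma_k=\phi_k r_k$ subconverges, and the limit lies in the closed subgroup $C$.

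With freeness and properness in hand, part~(a) follows: $U(\sK)$ is an open subset of $\C^m$ and $C\cong\C^\ell$ acts holomorphically, so $U(\sK)/C$ inherits a complex-manifold structure for which the projection is holomorphic, while the residual $\T^m\subset(\C^\times)^m$ acts holomorphically since it commutes with $C$. For the diffeomorphism~(b) I would exhibit $\zk$ as a common global slice. The conditions defining $\zk$ in~\eqref{defzk} depend only on the moduli $|z_i|$, so $\zk$ is $\T^m$-invariant and, for $\gamma\in C$, one has $\gamma z\in\zk$ iff $|\gamma|z\in\zk$. The realisation $U(\sK)/R\cong\zk$ gives a smooth retraction assigning to $z$ the unique $r(z)\in R$ with $r(z)z\in\zk$; setting $c(z)=\kappa^{-1}(r(z))\in C$ and $\rho(z)=c(z)z$ produces a smooth, $C$-invariant map $\rho\colon U(\sK)\to\zk$ with $\rho|_\zk=\mathrm{id}$, which descends to a smooth inverse of the smooth bijection $\zk\to U(\sK)/C$, $z\mapsto Cz$; compactness of $U(\sK)/C$ then follows. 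The one genuinely new point beyond the $R$-case is freeness: the fixed-point relation only forces the $w_i$ to be \emph{imaginary} off $I$, and it is precisely the identity $\Re(\mathfrak c)=\Ker A$ together with linear independence of $\{\mb a_i\colon i\in I\}$ that upgrades this to $w=0$. This is where I expect the main difficulty to lie; everything else is a transfer of the established real statements through~$|\cdot|$.
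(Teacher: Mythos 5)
Your part~(a) is correct, and it is essentially the expected argument: from conditions (a),(b) of Construction~\ref{psi} you deduce $\Re(\mathfrak c)=\Ker A$, hence $|\cdot|\colon C\to R$ is a Lie group isomorphism and $C\cap\T^m=\{1\}$; freeness then reduces to $\Ker A\cap\R^I=\{\mathbf 0\}$ for $I\in\sK$ (linear independence of $\{\mb a_i\colon i\in I\}$), and properness transfers from the $R$-action through compactness of $\T^m$ and closedness of $C$ in $(\C^\times)^m$. Note that the paper itself does not prove this theorem---it is imported from \cite[Theorem~3.3]{pa-us12}---so the only in-paper proof to compare against is that of the polytopal analogue, Theorem~\ref{zpcom}, whose key steps (the induced actions of $C$ and $R$ on $U(\sK)/\T^m$ coincide; $C\cap\T^m$ is trivial) are exactly the bridge you use.

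The genuine gap is in part~(b). Your argument hinges on a ``smooth retraction'' $z\mapsto r(z)$, where $r(z)\in R$ is the unique element with $r(z)z\in\zk$; but no such \emph{smooth} map exists in general, because $\zk\subset\C^m$ is not a smooth submanifold---it is a cell complex with corners, and Theorem~\ref{zksmooth} provides only a \emph{homeomorphism} $U(\sK)/R\cong\zk$. Concretely, take $n=1$, $m=3$, $\sK=\{\varnothing,\{1\},\{2\}\}$ with $\mb a_1=1$, $\mb a_2=-1$, $\mb a_3=0$: then $\zk=\partial(\D^2)\times\T$ has a corner along $\T^3$, $R=\{(s,s,u)\colon s,u>0\}$, and $r(z)=\bigl(1/\max(|z_1|,|z_2|),\,1/\max(|z_1|,|z_2|),\,1/|z_3|\bigr)$ is not differentiable on $\{|z_1|=|z_2|\}$. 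So $\rho$ is only continuous, and what your argument actually proves is that $\zk\to U(\sK)/C$ is a continuous bijection from a compact space to a Hausdorff space, i.e.\ a homeomorphism; this yields compactness of $U(\sK)/C$ but not the diffeomorphism. The failure is not cosmetic: since the only smooth structure $\zk$ carries is the one of $U(\sK)/R$, statement~(b) means ``$U(\sK)/C$ is diffeomorphic to $U(\sK)/R$'', and your slice-matching map $U(\sK)/C\to U(\sK)/R$ is genuinely non-smooth---in the example above, after absorbing moduli into the $R$-action, the offending factor survives as the phase $e^{-i\log\max(|z_1|,|z_2|)}$ on the third coordinate, so composing with the $R$-invariant function $\sin(\arg z_3)$ exhibits the non-differentiability. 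This is precisely the difficulty the paper circumvents in the polytopal case by replacing the cornered slice $\zk$ with the smooth quadric model $\zp$ and arguing via transversality of $C$-orbits (Theorem~\ref{zpcom}); for a general complete fan the diffeomorphism in~(b) is the nontrivial content of \cite[Theorem~3.3]{pa-us12} and requires a different idea. Your closing remark that freeness is ``the one genuinely new point'' therefore misidentifies where the real difficulty lies.
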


Besides compact complex tori described in Example~\ref{2torus},
other examples of $\zk$ include Hopf and Calabi--Eckmann manifolds
(see~\cite{pa-us12} and Example~\ref{hopf} below).

\begin{remark}
The subgroup $C\subset(\C^\times)^m$ depends on the vectors $\mb
a_1,\ldots,\mb a_m$ and the choice of~$\varPsi$ in
Construction~\ref{psi}. Unlike the smooth case, the complex
structure on $\zk$ depends in an essential way on all these pieces
of data.
\end{remark}

As in the real situation of Theorem~\ref{Rzkzp}, in the case of
normal fans we can view $\zk$ as the intersection of
quadrics~$\zp$ given by~\eqref{zpqua}:

\begin{theorem}\label{zpcom}
Let $P$ be a simple polytope given by~\eqref{ptope} with even
$m-n$, and let $\sK=\sK_P$ be the corresponding simplicial
complex~\eqref{simpo}. Then the composition
\[
  \zp\stackrel{i_{\mathcal Z}}\longrightarrow U(\sK)\to U(\sK)/C
\]
is a $\T^m$-equivariant diffeomorphism. It endows the intersection
of quadrics $\zp$ with the structure of a complex manifold.
\end{theorem}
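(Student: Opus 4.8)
The plan is to show that the composition $\pi\circ i_{\mathcal Z}\colon\zp\to U(\sK)/C$, where $\pi$ denotes the quotient projection, is a bijective local diffeomorphism, and then to observe that $\T^m$-equivariance is formal. Throughout I would use that $\sK=\sK_P$ is the underlying complex of the normal fan $\Sigma_P$, which is a complete simplicial fan, so Theorems~\ref{zksmooth}, \ref{Rzkzp} and \ref{zkcomplex} all apply: in particular both $R\cong\R^{m-n}$ and $C\cong\C^\ell$ act freely and properly on $U(\sK)$, and $\zp\subset U(\sK)$ is a smooth submanifold of dimension $m+n$ cut out by the Hermitian quadrics~\eqref{zpqua}, whose equations depend only on $\mu(\mb z)=(|z_1|^2,\dots,|z_m|^2)$.

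For bijectivity, the key point I would isolate is that the componentwise modulus map $|\cdot|\colon(\C^\times)^m\to\R^m_>$ restricts to an isomorphism of Lie groups $C\xrightarrow{\;\sim\;}R$: at the Lie algebra level it is $\Re\colon\mathfrak c\to\Ker A$, which is an isomorphism by the construction of $\mathfrak c$ (condition~(a) of Construction~\ref{psi}), and $\exp$ identifies $\mathfrak c$ with $C$ and $\Ker A$ with $R$. Since $\mu(c\cdot\mb z)=\mu(|c|\cdot\mb z)$ for $c\in C$ and $\zp$ is defined by conditions on $\mu$, the assignment $t\cdot\mb z\mapsto c\cdot\mb z$, where $c\in C$ is the unique element with $|c|=t$, gives a bijection between the $R$-orbit and the $C$-orbit of any $\mb z\in U(\sK)$ that preserves membership in $\zp$. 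Because each $R$-orbit meets $\zp$ in exactly one point (Theorem~\ref{Rzkzp} and its proof), each $C$-orbit then meets $\zp$ in exactly one point, so $\pi\circ i_{\mathcal Z}$ is a bijection.

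For the local diffeomorphism property I would prove transversality, namely $T_{\mb z}\zp\cap T_{\mb z}(C\cdot\mb z)=0$ for every $\mb z\in\zp$. A tangent vector to the orbit has the form $\mb v=(w_1z_1,\dots,w_mz_m)$ with $\mb w\in\mathfrak c$, and differentiating~\eqref{zpqua} shows that $\mb v\in T_{\mb z}\zp$ exactly when the real vector $(\Re(\bar z_k v_k))_k=(|z_k|^2\Re w_k)_k$ is orthogonal to $\Ker A$ (the row space of $\varGamma$). Pairing this vector with $\Re\mb w\in\Ker A$ itself gives $\sum_k|z_k|^2(\Re w_k)^2=0$, whence $\Re w_k=0$ for every $k$ with $z_k\ne0$; freeness of the $R$-action then forces $\Re\mb w=0$, and condition~(a') (equivalently $\mathfrak c\cap i\R^m=\{\mathbf 0\}$) forces $\mb w=0$, so $\mb v=0$. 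As $\dim\zp=m+n=\dim U(\sK)/C$ while the orbit has dimension $2\ell=m-n$, the vanishing intersection makes $d(\pi\circ i_{\mathcal Z})$ an isomorphism at each point. A bijective local diffeomorphism is a diffeomorphism, and it pulls back the complex structure of $U(\sK)/C$ from Theorem~\ref{zkcomplex} to $\zp$.

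Equivariance is then immediate: $\T^m$ and $C$ are commuting subgroups of $(\C^\times)^m$, so $\pi$ is $\T^m$-equivariant, and $\zp$ is $\T^m$-invariant, so $\pi\circ i_{\mathcal Z}$ intertwines the two $\T^m$-actions. The step I expect to require the most care is the interaction between freeness and coordinates $z_k$ that vanish (points lying over the boundary of $P$): this enters both in the modulus correspondence and in passing from $\sum_k|z_k|^2(\Re w_k)^2=0$ to $\Re\mb w=0$, and it is precisely here that the complete-fan hypothesis, guaranteeing free and proper actions, is essential.
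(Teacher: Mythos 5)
Your proposal is correct, and its overall architecture matches the paper's: both reduce bijectivity to the fact (Theorem~\ref{Rzkzp} and its proof) that every $R$-orbit meets $\zp$ in a single point, and both then establish transversality of $C$-orbits to~$\zp$. Your bijectivity step is the paper's argument in slightly different clothing: the paper says that the induced actions of $C$ and $R$ on $U(\sK)/\T^m$ coincide and then lifts the single-point statement using $C\cap\T^m=\{1\}$, which amounts to your observation that $|\cdot|$ restricts to a group isomorphism $C\to R$ and that $\mu(c\cdot\mb z)=\mu(|c|\cdot\mb z)$, so the orbit correspondence $t\cdot\mb z\mapsto c\cdot\mb z$ preserves membership in~$\zp$. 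Where you genuinely diverge is transversality: the paper deduces it from the already-known transversality of $R$-orbits (cited from Audin via Theorem~\ref{Rzkzp}) through the identity $T_{\mb z}(R\mb z)\oplus T_{\mb z}\zp=T_{\mb z}(C\mb z)\oplus T_{\mb z}\zp$, which holds because a $C$-tangent vector $(w_kz_k)_k$, $\mb w\in\mathfrak c$, differs from the $R$-tangent vector $\bigl((\Re w_k)z_k\bigr)_k$ by a vector tangent to the $\T^m$-orbit, hence tangent to the $\T^m$-invariant submanifold~$\zp$. You instead prove $T_{\mb z}\zp\cap T_{\mb z}(C\mb z)=0$ from scratch by differentiating the quadrics~\eqref{zpqua} and exploiting positivity of $\sum_k|z_k|^2(\Re w_k)^2$, together with freeness of the $R$-action and condition~(a') of Construction~\ref{psi}; this computation is sound (including the delicate handling of vanishing coordinates). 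What each route buys: yours is self-contained, in effect re-proving the nondegeneracy of the intersection of quadrics rather than quoting it, at the cost of a longer calculation; the paper's is shorter but leans on the cited $R$-orbit transversality and leaves the subspace identity to the reader.
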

\begin{proof}
We need to show that the $C$-orbit $C\mb z$ of any $\mb z\in
U(\sK)$ intersects $\zp\subset U(\sK)$ transversely at a single
point. First we show that the $C$-orbit of any $\mb y\in
U(\sK)/\T^m$ intersects $\zp/\T^m=i_P(P)$ at a single point; this
follows from Theorem~\ref{Rzkzp} and the fact that the induced
actions of $C$ and $R$ on $U(\sK)/\T^m$ coincide. Then we show
that $C\mb z$ intersects the full preimage $\zp=\mu^{-1}(i_P(P))$
at a single point using the fact that $C$ and $\T^m$ have trivial
intersection in $(\C^\times)^m$. The transversality of the
intersection $C\mb z\cap \zp$ follows from the transversality for
$R$-orbits, because $T_{\mb z}(R\mb z)\oplus T_{\mb z}\zp=T_{\mb
z}(C\mb z)\oplus T_{\mb z}\zp$.
\end{proof}

\begin{remark}
The embedding $i_{\mathcal Z}\colon\zp\to U(\sK)$ is not
holomorphic. In the polytopal case, the complex structure on $\zp$
coming from Theorem~\ref{zpcom} is equivalent to the structure of
an LVM-manifold described in~\cite{bo-me06}.
\end{remark}

\section{Submanifolds, analytic subsets and meromorphic functions}
In this section we consider moment-angle manifolds $\zk$
corresponding to complete simplicial $n$-dimensional fans $\Sigma$
defined by data $\{\sK;\mb a_1,\ldots,\mb a_m\}$ with $m-n=2\ell$.
The manifold $\zk$ is diffeomorphic to the quotient $U(\sK)/C$, as
described in the previous section. This is used to equip $\zk$
with a complex structure. The complex structure depends on the
vectors $\mb a_1,\ldots,\mb a_m$ and the choice of a map $\varPsi$
in Construction~\ref{psi}, but we shall not reflect this in the
notation.

\subsection{Coordinate submanifolds}
For each $J\subset [m]$, we define the corresponding
\emph{coordinate submanifold} in $\zk$ by
\[
  \mathcal Z_{\mathcal K_J}=\{(z_1,\ldots,z_m)\in\zk\colon z_i=0\quad\text{for }i\notin
  J\}.
\]
It is the moment-angle manifold corresponding to the \emph{full
subcomplex}
\[
  \sK_J=\{I\in\sK\colon I\subset J\}.
\]
In the situation of Theorem~\ref{zkcomplex},  $\mathcal
Z_{\mathcal K_J}$ is identified with the quotient of
\[
  U(\sK_J)=\{(z_1,\ldots,z_m)\in U(\sK)\colon z_i=0\quad\text{for }i\notin
  J\}
\]
by the group $C$. In particular, $U(\sK_J)/C$ is a complex
submanifold in~$U(\sK)/C$.

Observe that the closure of any $(\C^\times)^m$-orbit of $U(\sK)$
has the form $U(\sK_J)$ for some $J\subset[m]$ (in particular, the
dense orbit corresponds to $J=[m]$). Similarly, the closure of any
$(\C^\times)^m/C$-orbit of $\zk\cong U(\sK)/C$ has the
form~$\mathcal Z_{\mathcal K_J}$.

\subsection{Holomorphic foliations}
\begin{construction}[Holomorphic foliation $\cF$ on
$\zk$]\label{Ffoli} Consider the complexified map
$A_\C\colon\C^m\to N_\C$, $\mb e_i\mapsto\mb a_i$. Define the
following complex $(m-n)$-dimensional subgroup in~$(\C^\times)^m$:
\begin{equation}\label{gsigma}
  G=\exp(\Ker A_\C)=\bigl\{\bigl(e^{z_1},\ldots,e^{z_m}\bigr)\in
  (\C^\times)^m\colon(z_1,\ldots,z_m)\in\Ker A_\C\bigr\}.
\end{equation}
It contains both the real subgroup~$R$~\eqref{rsigma} and the
complex subgroup $C$~\eqref{csigma}.

The group $G$ acts on $U(\sK)$, and its orbits define a
holomorphic foliation on~$U(\sK)$. Since $G\subset(\C^\times)^m$,
this action is free on the open subset $(\C^\times)^m\subset
U(\sK)$, so that a generic leaf of the foliation has complex
dimension $m-n=2\ell$ (in fact, all leaves have the same
dimension, see Proposition~\ref{leaves}). The $\ell$-dimensional
closed subgroup $C\subset G$ acts on $U(\sK)$ freely and properly
by Theorem~\ref{zkcomplex}, so that $U(\sK)/C$ carries a
holomorphic action of the quotient group $D=G/C$.

Recall from Construction~\ref{psi} that $\mathfrak
c=\mathop{\mathrm{Lie}}C\subset\Ker
A_\C=\mathop{\mathrm{Lie}}G\subset\C^m$.

The action of the group $D$ on $\zk$ is holomorphic and
nondegenerate, i.e., for any point $x\in \zk$ the differential
$T_e D\to T_x \zk$ of the action map $g\mapsto g\cdot x$ is
injective. Therefore the orbits of~$D$ define a smooth holomorphic
foliation $\cF$ on $\zk$.
\end{construction}

The leaves of the foliations from Construction~\ref{Ffoli} are
described as follows (compare Example~\ref{2torus}):

\begin{proposition}\label{leaves}
For any subset $I\subset [m]$, define the coordinate subspace
$\C^I=\C\langle\mb e_k\colon k\in I\rangle\subset\C^m$ and the
subgroup
\[
  \Gamma_I=\Ker A_\C\cap\bigl(\Z\langle2\pi i\mb e_1,\ldots,2\pi i
  \mb e_m\rangle+\C^I\bigr).
\]
\begin{itemize}
\item[(a)] $\Gamma_I$ is discrete whenever $I\in\sK$.

\item[(b)] We have $G\cong\Ker A_\C/\Gamma$, where $\Gamma=\Gamma_\varnothing$.
Any leaf $G\mb z$ of the foliation on $U(\sK)$ by the orbits of
$G$ is biholomorphic to
$(\C^\times)^{\rk\Gamma_I}\times\C^{2\ell-\rk\Gamma_I}$, where
$I\in\sK$ is the set of zero coordinates of $\mb z\in U(\sK)$.

\item[(c)] We have $D=G/C\cong\bigl(\Ker A_\C/\mathfrak c\bigr)\big/ p\,(\Gamma)$,
where $p\colon\Ker A_\C\to\Ker A_\C/\mathfrak c$ is the
projection. Any leaf $\cF z$ of the foliation $\cF$ on the
moment-angle manifold $\zk\cong U(\sK)/C$ is biholomorphic to
$\C^\ell/p(\Gamma_I)$.
\end{itemize}
\end{proposition}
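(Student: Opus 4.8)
The plan is to analyze the group $G = \exp(\Ker A_\C)$ by understanding its kernel under the exponential map, and then descend to the quotient $D = G/C$. First I would establish part (b) as the backbone. The exponential map $\exp\colon \C^m \to (\C^\times)^m$ is the quotient by the standard lattice $\Z\langle 2\pi i\mb e_1, \ldots, 2\pi i\mb e_m\rangle$. Restricting to the subspace $\Ker A_\C$, the subgroup $G = \exp(\Ker A_\C)$ is therefore the image of $\Ker A_\C$ under this quotient, and its kernel is precisely the intersection $\Ker A_\C \cap \Z\langle 2\pi i\mb e_1, \ldots, 2\pi i\mb e_m\rangle = \Gamma_\varnothing = \Gamma$. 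This gives $G \cong \Ker A_\C/\Gamma$ immediately. The discreteness of $\Gamma$ follows because it is the intersection of a linear subspace with a lattice, hence a discrete subgroup (a lattice of some rank $\le 2\ell$) of $\Ker A_\C \cong \C^{2\ell}$.

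Next I would identify an arbitrary leaf $G\mb z$. Writing $\mb z \in U(\sK)$ with set of zero coordinates $I \subset [m]$ (so that $I \in \sK$ by the definition of $U(\sK)$), the stabiliser of $\mb z$ under the $G$-action is governed by how $G$ can act by componentwise multiplication while fixing the vanishing coordinates — these impose no constraint on the corresponding exponents, which is exactly why the subspace $\C^I$ enters. Concretely, the leaf $G\mb z$ is biholomorphic to $(\Ker A_\C)/\bigl(\Ker A_\C \cap (\Gamma + \C^I)\bigr) = (\Ker A_\C)/\Gamma_I$, where the extra $\C^I$ summand accounts for the directions in which the action degenerates on the coordinate hyperplanes where $z_i = 0$. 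Since $\Ker A_\C \cong \C^{2\ell}$ and $\Gamma_I$ is a discrete subgroup of rank $r := \rk \Gamma_I$ contained in (a translate structure over) $\C^{2\ell}$, the quotient splits as $(\C^\times)^{r} \times \C^{2\ell - r}$: the $r$ lattice directions exponentiate to $\C^\times$-factors and the remaining $2\ell - r$ free directions give $\C$-factors. Part (a) is then the special case that $\Gamma_I$ is discrete precisely when $I \in \sK$; the point is that for $I \in \sK$ the vectors $\{\mb a_i : i \in I\}$ are linearly independent (the simplicial fan condition), which forces $\Ker A_\C \cap \C^I = \{\mathbf 0\}$ and hence keeps $\Gamma_I$ from accumulating.

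For part (c), I would quotient the whole picture by $C = \exp\mathfrak c$, where $\mathfrak c = \Lie C \subset \Ker A_\C$. By Theorem~\ref{zkcomplex} the action of $C$ on $U(\sK)$ is free and proper, so $D = G/C$ acts on the quotient, and dividing $G \cong \Ker A_\C/\Gamma$ by the image of $C$ gives $D \cong (\Ker A_\C/\mathfrak c)/p(\Gamma)$ with $p$ the projection. The condition $\mathfrak c \cap \overline{\mathfrak c} = \{\mathbf 0\}$ from Construction~\ref{psi}(a') guarantees that $p$ restricted to the real span of $\Gamma$ is injective — this is the same mechanism that made the torus in Example~\ref{2torus} nondegenerate — so $p(\Gamma)$ remains a genuine lattice and $\Ker A_\C/\mathfrak c \cong \C^\ell$. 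Applying the same leaf analysis after projection, the leaf $\cF z = (G\mb z)/C$ becomes biholomorphic to $\C^\ell/p(\Gamma_I)$.

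The main obstacle I anticipate is the leaf-stabiliser computation that produces the $\C^I$ summand in $\Gamma_I$ and its interaction with discreteness. One must argue carefully that on the boundary stratum where coordinates indexed by $I$ vanish, the $G$-action sees only the quotient $\Ker A_\C/(\Ker A_\C \cap \C^I)$, and then verify that combining this with the lattice $\Gamma$ yields exactly $\Gamma_I$ and not something larger. The linear-independence of $\{\mb a_i : i \in I\}$ for $I \in \sK$ is the crucial input that makes $\Gamma_I$ discrete and simultaneously controls the rank decomposition; pinning down this rank and checking it matches the claimed splitting $(\C^\times)^{\rk\Gamma_I} \times \C^{2\ell-\rk\Gamma_I}$ is where the real work lies, the rest being formal manipulation of exponential exact sequences.
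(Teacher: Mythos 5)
Your overall route---realise $G$ as $\exp(\Ker A_\C)$, compute the stabiliser of $\mb z$ under the exponential action of $\Ker A_\C$, then quotient by $C$ for part (c)---is exactly the paper's, and your parts (a) and (c) match its argument. But the stabiliser identification at the heart of part (b) is wrong as you state it. You claim the leaf is $(\Ker A_\C)/\bigl(\Ker A_\C\cap(\Gamma+\C^I)\bigr)$ and that $\Ker A_\C\cap(\Gamma+\C^I)=\Gamma_I$. This equality fails in general: $\Gamma_I$ is by definition the intersection of $\Ker A_\C$ with $\Z\langle2\pi i\mb e_1,\ldots,2\pi i\mb e_m\rangle+\C^I$, where the lattice is the \emph{full ambient} lattice of $\C^m$, and $\Gamma_I$ typically contains elements coming from ambient lattice vectors $\gamma\notin\Ker A_\C$ which land in $\Ker A_\C$ only after a correction from $\C^I$; such elements do not lie in $\Gamma+\C^I$. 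Concretely, take the fan of $\C P^2$ with $\mb a_1=(1,0)$, $\mb a_2=(0,1)$, $\mb a_3=(-1,-1)$ and one ghost vertex with vector $\mb a_4=(\sqrt2,0)$. Then $\Ker A_\C=\{(z_3-\sqrt2\,z_4,\,z_3,\,z_3,\,z_4)\}$ and $\Gamma=2\pi i\,\Z\langle(1,1,1,0)\rangle$ has rank $1$, but for $I=\{1\}$ one gets $\Gamma_I=\{2\pi i(c-\sqrt2\,d,\,c,\,c,\,d)\colon c,d\in\Z\}$ of rank $2$: the element with $(c,d)=(0,1)$ is $2\pi i\mb e_4$ corrected by $-2\pi i\sqrt2\,\mb e_1\in\C^I$, and it is not in $\Gamma+\C^I$. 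With your formula the leaf through a point whose only zero coordinate is $z_1$ would be $\C^\times\times\C$, whereas it is actually $(\C^\times)^2$. The same misconception resurfaces in your closing paragraph, where you propose to verify that ``combining'' the degenerate $\C^I$-directions ``with the lattice $\Gamma$ yields exactly $\Gamma_I$'': that verification cannot succeed, since $\Gamma_I$ is not generated by $\Gamma$ together with anything inside $\Ker A_\C\cap\C^I$ (the latter is $\{\mathbf 0\}$ precisely when $I\in\sK$, by your own part (a)).

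The repair is the direct computation the paper performs. For $w\in\Ker A_\C$, the element $\exp(w)$ fixes $\mb z$ if and only if $e^{w_j}=1$ for every $j$ with $z_j\ne0$, i.e. $w_j\in2\pi i\Z$ for all $j\notin I$, with $w_j$ unconstrained for $j\in I$. The set of all such $w\in\C^m$ is precisely $\Z\langle2\pi i\mb e_1,\ldots,2\pi i\mb e_m\rangle+\C^I$ (the generators $2\pi i\mb e_j$ with $j\in I$ being absorbed into $\C^I$), so the stabiliser is $\Ker A_\C\cap\bigl(\Z\langle2\pi i\mb e_1,\ldots,2\pi i\mb e_m\rangle+\C^I\bigr)=\Gamma_I$ \emph{by definition}, with no comparison to $\Gamma$ needed at any point. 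Once the stabiliser is identified this way, the rest of your write-up---discreteness via linear independence of $\{\mb a_i\colon i\in I\}$, the splitting $(\C^\times)^{\rk\Gamma_I}\times\C^{2\ell-\rk\Gamma_I}$, and the descent to $D=G/C$ giving $\C^\ell/p(\Gamma_I)$ in part (c)---goes through and coincides with the paper's proof.
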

\begin{proof} (a) If $I\in\sK$, then the composite map $\C^I\to\C^m\stackrel{A_\C}{\longrightarrow}N_\C$
is monomorphic by the definition of a simplicial fan. Therefore,
$\C^I\cap\Ker A_\C=\{\bf0\}$, which implies that $\Gamma_I$ is
discrete.

(b) Since $(\C^\times)^m=\exp(\C^m)\cong\C^m/\Z\langle2\pi i\mb
e_1,\ldots,2\pi i \mb e_m\rangle$, the isomorphism $G\cong\Ker
A_\C/\Gamma$ follows from the definition of~$G$,
see~\eqref{gsigma}. Therefore, to describe the orbit $G\mb z$, we
can consider the orbit of the exponential action of $\Ker A_\C$
instead. The stabiliser of $\mb z$ under the action of $\Ker A_\C$
is exactly $\Gamma_I$, and the orbit itself is $\Ker
A_\C/\Gamma_I\cong(\C^\times)^{\rk\Gamma_I}\times\C^{2\ell-\rk\Gamma_I}$.

(c) By the same argument, the orbit of $z\in\zk$ under the action
of $D=G/C$ is $(\Ker A_\C/\mathfrak c)/p(\Gamma_I)$, and $\Ker
A_\C/\mathfrak c\cong\C^\ell$.
\end{proof}

A subset $X'\subset X$ of a space with Lebesgue measure $(X,\mu)$
is said to contain \emph{almost all} elements of $X$ if its
complement has zero measure: $\mu(X\backslash X')=0$; in this case
points of $X'$ are \emph{generic} for~$X$, and the condition
specifying $X'$ in $X$ is \emph{generic}.

A vector of $\Q^m\subset\R^m$ is called \emph{rational}. A linear
subspace $V\subset\R^m$ is \emph{rational} if it is generated by
rational vectors. A linear function $\phi\colon\R^m\to\R$ is
\emph{rational} if it takes rational values on rational vectors
(equivalently, if it has rational coefficients when written in the
standard basis of~$(\R^m)^*$).

Here are two examples of generic conditions for linear maps
$A\colon \R^m\to N_\R$
(assuming that $m>\dim N_\R$, which is the case when $\Sigma$ is a
complete fan):
\begin{itemize}
\item[(g1)] no rational linear function on $\R^m$ vanishes identically on $\Ker A$ (equivalently,
$\Ker A$ is not contained in any rational hyperplane of~$\R^m$);
\item[(g2)] $\Ker A$ does not contain rational vectors of~$\R^m$.
\end{itemize}

We observe that if the configuration of vectors $\mb
a_1,\ldots,\mb a_m$ satisfies the generic condition~(g2), then the
subgroup $\Gamma$ of Proposition~\ref{leaves} is trivial, $G$ is
biholomorphic to $\C^{2\ell}$ and $D=G/C$ is biholomorphic
to~$\C^\ell$.

On the other hand, if $\Ker A\subset\R^m$ is a rational subspace
(i.e. it has a basis consisting of vectors with integer
coordinates), then $\rk\Gamma=2\ell$ and the subgroup
$G\subset(\C^\times)^m$ is closed and is isomorphic to
$(\C^\times)^{2\ell}$. In this case the vectors $\mb
a_1,\ldots,\mb a_m$ generate a lattice $N_\Z=\Z\langle\mb
a_1,\ldots,\mb a_m\rangle$, and $\Sigma$ is a rational (possibly
singular) fan with respect to this lattice. If each vector $\mb
a_i$ is primitive in~$N_\Z$, then the quotient $U(\sK)/G$ is the
\emph{toric variety} $V_\Sigma$ corresponding to the fan~$\Sigma$
(see e.g.~\cite{c-l-s11}). The foliation $\cF$ gives rise to a
holomorphic principal Seifert bundle $\pi\colon\zk\to V_\Sigma$
with fibres compact complex tori $G/C$ (leaves of~$\cF$),
see~\cite{me-ve04} and~\cite[Proposition~5.2]{pa-us12}.

Foliations similar to~$\mathcal F$ were studied in~\cite{ba-za},
together with generic conditions.

\subsection{Transverse K\"ahler forms}
\begin{definition}
Let $\cF$ be a holomorphic $\ell$-dimensional
foliation on a complex manifold~$M$. A $(1,1)$-form $\omega_\cF$
on $M$ is called \emph{transverse K\"ahler} with respect to~$\cF$
if the following two conditions are satisfied:
\begin{itemize}
\item[(a)] $\omega_\cF$ is closed, i.e. $d\omega_\cF=0$;

\item[(b)] $\omega_\cF$ is positive and the zero space of
$\omega_\cF$ is the tangent space of $\cF$. (That is,
$\omega(V,JV)\ge0$ for any real tangent vector~$V$, and
$\omega_\cF(V,JV)=0$ if and only if $V$ is tangent to~$\cF$; here
$J$ is the operator of complex structure.)
\end{itemize}
\end{definition}

One way to define a transverse K\"ahler form on $\zk$ is to use a
modification of an argument of Loeb and Nicolau~\cite{lo-ni99}; it
works only for normal fans:

\begin{proposition}\label{trkah}
Assume that $\Sigma=\Sigma_P$ is the normal fan of a simple
polytope~$P$. Then the foliation~$\cF$ described in
Construction~{\rm\ref{Ffoli}} admits a transverse K\"ahler
form~$\omega_\cF$.
\end{proposition}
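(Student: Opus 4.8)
The plan is to produce the transverse K\"ahler form $\omega_\cF$ explicitly, by descending a suitable $(1,1)$-form from $\C^m$ to the quotient $U(\sK)/C \cong \zk$. The idea of Loeb and Nicolau, which I would adapt, is that the foliation $\cF$ arises from the action of $D = G/C$, and the transverse directions correspond to the base toric variety $V_\Sigma$ in the rational case; in the polytopal case one expects the pullback of a K\"ahler (symplectic) form on the polytope $P$ to furnish the transverse structure. Concretely, since $\Sigma = \Sigma_P$, we have the moment map $\mu(\mb z) = (|z_1|^2,\ldots,|z_m|^2)$ and the affine embedding $i_P\colon N^*_\R \to \R^m$, so $\zp$ sits inside $\C^m$ as the Hermitian quadrics~\eqref{zpqua}. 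The natural candidate is to start from the standard symplectic form $\omega_{\mathrm{std}} = \frac{i}{2}\sum_{k=1}^m dz_k \wedge d\bar z_k$ on $\C^m$ (whose moment map for the $\T^m$-action is exactly $\mu$), and to modify it so that its zero directions become precisely the tangent space $T\cF$.

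\begin{proof}[Sketch of the plan]
First I would work on $U(\sK)$ and seek a basic $(1,1)$-form for the $C$-action that is positive transverse to the leaves of the $G$-action. Restricting to the open dense orbit $(\C^\times)^m$, write $z_k = e^{w_k}$ and consider $\sum_k \lambda_k\, \frac{i}{2}\, dw_k \wedge d\bar w_k$ for suitable real weights $\lambda_k$; the kernel of such a form is governed by the vectors $\mb a_1,\ldots,\mb a_m$ through the map $A_\C$ defining $G$, so by choosing the $\lambda_k$ adapted to $P$ (for instance using the support numbers $b_k$ of~\eqref{ptope}) I would arrange that the form is positive semidefinite with kernel exactly $\Ker A_\C = \mathop{\mathrm{Lie}} G$. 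Next I would verify that this form is closed and $C$-basic, i.e.\ invariant under $C$ and annihilating $\mathfrak c = \mathop{\mathrm{Lie}} C$, so that it descends to a closed $(1,1)$-form $\omega_\cF$ on $U(\sK)/C \cong \zk$. By Proposition~\ref{leaves} the leaves of $\cF$ are the images of the $G$-orbits, whose tangent spaces are spanned by $\Ker A_\C$; hence the descended form has kernel exactly $T\cF$, which is condition~(b) of the definition, while closedness is condition~(a).

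The main obstacle is ensuring that the chosen form is simultaneously closed, globally defined across all of $U(\sK)$ (not just the open orbit where $z_k = e^{w_k}$ makes sense), and positive with the correct kernel. On the singular part of $U(\sK)$ the logarithmic coordinates degenerate, so I expect to replace $\frac{i}{2}\, dw_k \wedge d\bar w_k$ by the globally defined $\frac{i}{2}\, \partial\bar\partial\, |z_k|^2$ or by $dd^c$ of an appropriate potential built from $\log|z_k|^2$, and to exploit the simplicial fan condition to control the behaviour as coordinates vanish. Concretely, using the fact that $\Sigma = \Sigma_P$ comes from a simple polytope, the natural potential is a strictly convex function associated to $P$ (such as the Legendre dual of the moment polytope), whose Hessian is positive transverse to the fibre directions; this is precisely where Loeb--Nicolau's construction enters and where the polytopal hypothesis is used. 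I would finish by checking transversality and positivity pointwise on each coordinate stratum $\mathcal Z_{\mathcal K_J}$, the simplicial condition on $\{\mb a_i\colon i\in I\}$ guaranteeing that $A_\C$ restricted to the relevant coordinate subspace is injective and hence that the form stays nondegenerate in the transverse directions.
\end{proof}
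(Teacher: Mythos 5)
Your plan has a genuine gap at its central step. In logarithmic coordinates $z_k=e^{w_k}$ on $(\C^\times)^m$, a constant-coefficient diagonal form $\sum_k\lambda_k\tfrac i2\,dw_k\wedge d\bar w_k$ is positive semidefinite only if all $\lambda_k\ge0$, and then its kernel is the complex coordinate subspace $\C\langle\mb e_k\colon\lambda_k=0\rangle$. No choice of weights can make this kernel equal to $\Ker A_\C$: that would force $\Ker A$ to be a real coordinate subspace, i.e.\ the nonzero vectors among $\mb a_1,\ldots,\mb a_m$ would form a basis of $N_\R$, which is impossible for a complete fan in $\R^n$ with $n\ge1$ (a complete simplicial fan needs at least $n+1$ rays). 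The same objection applies to nonnegative combinations of $\tfrac i2\,\partial\bar\partial|z_k|^2$. Passing to a non-diagonal constant Hermitian matrix $(h_{jk})$ in the $w$-coordinates would fix the kernel on the open orbit, but then $\tfrac i2\sum h_{jk}\,dz_j\wedge d\bar z_k/(z_j\bar z_k)$ blows up along the coordinate hyperplanes and does not extend to $U(\sK)$ --- precisely the obstacle you name but do not resolve. Your fallback, $dd^c$ of a convex potential built from the $\log|z_k|$'s, is exactly what the paper does in Theorem~\ref{form2}, but note what that theorem delivers: a form that is transverse K\"ahler only on the dense open subset $(\C^\times)^m/C$. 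The kernel identification there (Lemma~\ref{kerom}) uses $\mb z\in(\C^\times)^m$ in an essential way, and on the lower strata the kernel of $dd^cf$ may be strictly larger than $T\cF$. So neither branch of your plan, as written, produces a form whose kernel is exactly $T\cF$ at \emph{every} point of $\zk$, which is what Proposition~\ref{trkah} asserts.

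The idea you are missing is already half-stated in your own preamble: you propose to start from the standard form $\omega=\tfrac i2\sum dz_k\wedge d\bar z_k$ and ``modify'' it, whereas the paper's point is that no modification is needed --- one should \emph{restrict} rather than descend. Since $\Sigma=\Sigma_P$ with $P$ simple, Theorem~\ref{zpcom} gives a diffeomorphism $\varphi\colon U(\sK)/C\to\zp$, where $\zp\subset\C^m$ is the smooth intersection of Hermitian quadrics~\eqref{zpqua}. The paper sets $\omega_{\mathcal Z}=i^*_{\mathcal Z}\omega$ and $\omega_\cF=\varphi^*\omega_{\mathcal Z}$: closedness is automatic, the form is globally defined because $\zp$ is a closed submanifold of $\C^m$ (so there is no extension problem across strata), and the entire content of the proof is the verification, by the argument of \cite[Proposition~2]{lo-ni99}, that the zero foliation of $\omega_{\mathcal Z}$ coincides with $\varphi(\cF)$ --- this includes the positivity of $\omega_\cF$ with respect to the complex structure of $\zk$, which is not formal since $i_{\mathcal Z}$ is not holomorphic. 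This is where the polytopal hypothesis enters: $\zp$ is a level set of the moment map for the subtorus of $\T^m$ with Lie algebra $i\Ker A$, so the kernel of the restricted form at each point of $\zp$ is the tangent space to the orbit of that subtorus, and these orbits correspond to the leaves of $\cF$ under $\varphi$ (compare Construction~\ref{Ffoli} and Proposition~\ref{leaves}). This restriction trick is what makes the conclusion hold on all of $\zk$, strata included, and it is the step absent from your proposal.
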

\begin{proof}
Since $\Sigma$ is a normal fan of~$P$, we have a $\mathbb
T^m$-equivariant diffeomorphism $\phi\colon
U(\sK)/C\stackrel\cong\longrightarrow \zp$ between $\zk=U(\sK)/C$
and the intersection of quadrics $\zp\subset \C^m$
(Theorem~\ref{zpcom}). Let $\omega=\frac i2\sum dz_k\wedge
d\overline{z}_k$ be the standard form on~$\C^m$, and
$\omega_{\mathcal Z}=i^*_{\mathcal Z}\omega$ its restriction to
the intersection of quadrics~$\zp$. Define
$\omega_\cF=\varphi^*\omega_{\mathcal Z}$. Using the same argument
as~\cite[Proposition~2]{lo-ni99} one verifies that the zero
foliation of the form $\omega_{\mathcal Z}$ coincides with
$\varphi(\mathcal F)$, and therefore $\omega_\cF$ is transverse
K\"ahler.
\end{proof}

The condition that $\Sigma$ is a normal fan in
Proposition~\ref{trkah} is important. Indeed, it was shown
in~\cite{cu-za07} that general \emph{LVMB-manifolds} do not admit
transverse K\"ahler forms. A similar argument proves that there
are no transverse K\"ahler forms on the quotients $U(\sK)/C$ for
general complete simplicial fans.

We can relax the condition on the fan slightly at the cost of
weakening the conditions on the form~$\omega_\cF$:

\begin{definition}
A complete simplicial fan $\Sigma$ in $N_\R\cong\R^n$ is called
\emph{weakly normal} if there exists a (not necessarily simple)
$n$-dimensional polytope $P$ given by~\eqref{ptope} such that
$\Sigma$ is a simplicial subdivision of the normal fan~$\Sigma_P$.
\end{definition}

\begin{remark}
Let $\Sigma$ be a complete simplicial fan with chosen generators
$\mb a_1,\ldots,\mb a_m$ of its edges. Then $\Sigma$ is normal if
one can find constants $b_1,\ldots,b_m$ such the polytope $P$
defined by~\eqref{ptope} is simple and the simplicial complex
$\sK_P$~\eqref{simpo} coincides with the underlying complex $\sK$
of the fan. A fan $\Sigma$ is weakly normal if one can find
$b_1,\ldots,b_m$ so that $\sK$ is contained in~$\sK_P$;
equivalently, $F_{i_1}\cap\cdots\cap F_{i_k}\ne\varnothing$ in~$P$
if $\mb a_{i_1},\ldots,\mb a_{i_k}$ span a cone of~$\Sigma$.

In the geometry of toric varieties, a lattice
polytope~\eqref{ptope} (in which $\mb a_1,\ldots,\mb a_m$ are
primitive lattice vectors, and $b_1,\ldots,b_m$ are integers)
gives rise to an ample divisor on the toric variety $V_\Sigma$
corresponding to the normal fan $\Sigma=\Sigma_P$. In particular,
toric varieties corresponding to (rational) normal fans are
projective. Weakly normal rational fans $\Sigma$ give rise to
\emph{semiample divisors} on their corresponding toric varieties
(see~\cite[\S{}II.4.2]{a-d-h-l13}); such a divisor defines a map
from $V_\Sigma$ to a projective space, which is not necessarily an
embedding.

Given a cone $\sigma_I\in \Sigma$, the \emph{quotient fan}
$\Sigma/\sigma_I$ is defined. Its associated simplicial complex is
the link $\mathop{\mathrm{lk}}_{\sK}I$. If $\Sigma$ is weakly
normal (with the corresponding polytope~$P$), then
$\Sigma/\sigma_I$ is also weakly normal (with the corresponding
polytope being a face of~$P$). This fact will be used below in the
inductive arguments for Theorem~\ref{kahlsub} and
Theorem~\ref{final}.
\end{remark}

\begin{theorem}\label{form2}
Assume that $\Sigma$ is a weakly normal fan defined by $\{\mathcal
K;\mb a_1,\ldots,\mb a_m\}$. Then there exists an exact
$(1,1)$-form~$\omega_\cF$ on $\zk=U(\sK)/C$ which is transverse
K\"ahler for the foliation~$\cF$ on the dense open subset
$(\C^\times)^m/C\subset U(\sK)/C$.
\end{theorem}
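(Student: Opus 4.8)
The plan is to realise $\omega_\cF$ as $dd^c$ of an explicit $C$-invariant Guillemin-type potential on the open torus, and then to show that \emph{both} this form and a natural primitive extend smoothly across the remaining coordinate divisors. By weak normality fix $b_1,\dots,b_m$ so that the polytope $P=\{\mb u\in N_\R^*\colon h_k(\mb u)\ge0\}$, with $h_k(\mb u)=\langle\mb a_k,\mb u\rangle+b_k$ as in~\eqref{ptope}, satisfies $\sK\subseteq\sK_P$. On $(\C^\times)^m$ define the log--modulus map $\rho(\mb z)=A(\log|z_1|,\dots,\log|z_m|)=\sum_k\log|z_k|\,\mb a_k\in N_\R$, with $A$ as in~\eqref{lambdar}. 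Since $\Ker A_\C$ is conjugation-invariant, $\Re\mb w\in\Ker A$ for every $\mb w\in\Ker A_\C$, so $\rho$ is constant along the $G$-orbits of Construction~\ref{Ffoli}; in particular it is $C$-invariant and descends to $(\C^\times)^m/C$. Let $H\colon N_\R\to\R$ be the Legendre dual of the symplectic potential of $P$, a smooth strictly convex function whose gradient $\nabla H$ maps $N_\R$ diffeomorphically onto $\mathrm{int}\,P$, and set $\phi=H\circ\rho$ and $\omega_\cF=i\partial\bar\partial\phi=dd^c\phi$ on $(\C^\times)^m/C$ (with $d^c=i(\bar\partial-\partial)$).

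Transversality on the open set follows from a direct computation of the complex Hessian. Each $\rho_p=\sum_k(\mb a_k)_p\log|z_k|$ is pluriharmonic, so the first-order term drops out and
\[
  i\partial\bar\partial\phi=i\sum_{p,q}\frac{\partial^2H}{\partial u_p\partial u_q}(\rho)\,\eta_p\wedge\overline{\eta_q},\qquad \eta_p=\tfrac12\sum_k(\mb a_k)_p\,\frac{dz_k}{z_k}.
\]
For a tangent vector $v$ set $\xi_k=v_k/z_k$; then $\eta_p(v)=\tfrac12(A_\C\xi)_p$, so $\omega_\cF$ evaluates to a nonnegative multiple of $\sum_{p,q}H_{pq}(A_\C\xi)_p\overline{(A_\C\xi)_q}$. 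As $\mathrm{Hess}\,H>0$, this is positive and vanishes exactly when $A_\C\xi=0$, i.e. when $\xi\in\Ker A_\C$, i.e. when $v$ is tangent to the $G$-orbit through $\mb z$. On the quotient this zero space is precisely $T\cF$, so $\omega_\cF$ is transverse K\"ahler on $(\C^\times)^m/C$.

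It remains to extend $\omega_\cF$ to a smooth exact form on all of $\zk$, and this is the \textbf{main obstacle}. Writing $\mu_k=2|z_k|^2\,\partial\phi/\partial(|z_k|^2)=h_k(\nabla H(\rho))$ for the $k$-th moment-map component, one has $d^c\phi=\sum_k\mu_k\,d\theta_k$, where $\theta_k=\arg z_k$. Since $|z_k|^2\,d\theta_k=\Im(\bar z_k\,dz_k)$ is a smooth $1$-form vanishing on $\{z_k=0\}$, the candidate primitive $\beta:=\sum_k(\mu_k/|z_k|^2)\,\Im(\bar z_k\,dz_k)$ is smooth and $C$-invariant wherever $\mu_k/|z_k|^2$ extends smoothly; then $\omega_\cF=d\beta$ extends smoothly and is \emph{exact}. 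The analytic content is to verify, along every coordinate stratum of $\zk$ permitted by $\sK$, that $\mu_k$ vanishes to order $|z_k|^2$, equivalently that $\nabla H(\rho)$ approaches the correct face of $P$ at the Guillemin rate; the usual boundary analysis must be adapted since $P$ need not be simple (the fan being a subdivision of $\Sigma_P$). This is exactly where weak normality enters: the condition $F_{i_1}\cap\cdots\cap F_{i_k}\ne\varnothing$ whenever $\mb a_{i_1},\dots,\mb a_{i_k}$ span a cone of $\Sigma$ guarantees that each such stratum sits over a genuine face of $P$, so $\nabla H(\rho)$ has the required limiting behaviour and both $\beta$ and $\omega_\cF$ extend. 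Away from this open stratum the gradient $\nabla H$ lands on proper faces of $P$, so the extended $\omega_\cF$ is only semipositive there, consistent with the transverse-K\"ahler property holding solely on the dense subset $(\C^\times)^m/C$.
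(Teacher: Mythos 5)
Your computation on the dense stratum is correct: since each $\rho_p$ is pluriharmonic, $dd^c(H\circ\rho)=\sum_{p,q}H_{pq}(\rho)\,i\,\partial\rho_p\wedge\overline{\partial}\overline{\rho_q}$ is positive with kernel exactly $\Ker A_\C$, i.e.\ the tangents to the $G$-orbits, and this would be a clean substitute for the paper's Lemma~\ref{kerom}. (A small slip: $2|z_k|^2\,\partial\phi/\partial(|z_k|^2)$ equals $\langle\mb a_k,\nabla H(\rho)\rangle=h_k(\nabla H(\rho))-b_k$, not $h_k(\nabla H(\rho))$; your $\beta=\sum_k h_k(\nabla H(\rho))\,d\theta_k$ is then $d^c\phi+\sum_kb_k\,d\theta_k$, still a primitive of $\omega_\cF$ on the open torus, so this is harmless.) The genuine gap is the step you yourself call the main obstacle: the claim that $\mu_k=h_k(\nabla H(\rho))$ vanishes to order $|z_k|^2$ with smooth ratio, so that $\beta$ and $\omega_\cF=d\beta$ extend across the coordinate strata. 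This is asserted (``weak normality guarantees \dots the required limiting behaviour'') but never proved, and it is in fact \emph{false} exactly in the regime the theorem is about, namely when $P$ is not simple. Take for $P$ the square pyramid in $\R^3$ cut out by $h_0=u_3$, $h_1=1-u_1-u_3$, $h_2=1+u_1-u_3$, $h_3=1-u_2-u_3$, $h_4=1+u_2-u_3$, and let $\Sigma$ be the simplicial subdivision of $\Sigma_P$ splitting the non-simplicial normal cone at the apex into the cones on $\{\mb a_1,\mb a_2,\mb a_3\}$ and $\{\mb a_1,\mb a_2,\mb a_4\}$; then $m=5$, $n=3$, $\{1,2,3\}\in\sK$ is a maximal simplex, so the stratum $\{z_1=z_2=z_3=0\}$ lies in $U(\sK)$. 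Legendre duality $\rho=\nabla g(\mb u)$ for $g=\frac12\sum_kh_k\log h_k$ is equivalent to the system $h_k(\mb u)=r_ke^{-2s_k}$ with $r_k=e^{-1}|z_k|^2$ and $\mb s\in\Ker A=\langle(2,1,1,0,0),(0,1,1,-1,-1)\rangle$. Writing $\mb s=\alpha(0,1,1,-1,-1)+\beta(2,1,1,0,0)$ and eliminating, one finds along $r_1=r_2=r_3=t\to0$, $r_0=r_4=1$ that $p:=e^{2\alpha}\sim\sqrt{2t}$ and hence
\[
  \mu_1=\frac{r_1(r_3+r_4)\,p}{r_1+r_2}\ \sim\ \sqrt{t/2},
  \qquad
  \frac{\mu_1}{|z_1|^2}\ \sim\ \frac1{e\sqrt{2t}}\ \longrightarrow\ \infty .
\]
So $\mu_1$ vanishes only to order $|z_1|$, not $|z_1|^2$; moreover $\partial\mu_1/\partial\log|z_1|\sim\sqrt t$, so the coefficient of $i\,dz_1\wedge d\bar z_1$ in $dd^c\phi$ grows like $t^{-1/2}$. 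Neither your primitive nor the form itself extends even continuously across this stratum. The failure mechanism is precisely what weak normality permits: the limit point of $\nabla H(\rho)$ over the simplex $I=\{1,2,3\}$ is the apex, which lies on the \emph{extra} facet $F_4$ with $4\notin I$ and $z_4\ne0$, and this forces the kernel variables to run off to infinity.

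This is why the paper does not use the Legendre-dual potential at all. Its potential $f(\mb z)=\log\bigl(\sum_{\text{maximal }I\in\sK}|\mb z\,|^{\beta_I}\bigr)$, with $\beta_I(\mb e_k)=\langle\mb a_k,\mb u_I\rangle+b_k$, is a log-sum-exp smoothing of the same piecewise linear convex function that $H\circ A$ smooths (up to the pluriharmonic summand $\sum_kb_k\log|z_k|$), but its summands are adapted to the strata of $U(\sK)$: the exponents of $|\mb z\,|^{\beta_I}$ vanish for $k\in I$ and can be made $\ge2$ otherwise by rescaling $\mb b$, so near any point of $U(\sK)$ at least one summand is positive and regular, $f$ is globally regular, no extension step is needed, and all that remains is the Cauchy--Schwarz kernel computation of Lemma~\ref{kerom}. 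In effect your argument reproves the normal case (essentially Proposition~\ref{trkah}, where $P$ is simple and the Guillemin asymptotics do hold), while the whole point of Theorem~\ref{form2} is the weakly normal case. Repairing your route would require a symplectic potential whose boundary behaviour is prescribed by $\Sigma$ rather than by $P$; since a genuinely weakly normal $\Sigma$ is not the normal fan of any polytope, no Guillemin-type potential does this, and the natural replacement is exactly the paper's~$f$.
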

\begin{proof}
The plan of the proof is as follows. We construct a smooth
function $f\colon U(\sK)\to\R$ which is plurisubharmonic, so that
$\omega=dd^cf$ is a positive $(1,1)$-form (here $d=\partial
+\bar\partial$ and $d^c=-i(\partial-\bar\partial)$). We check that
the kernel of $\omega$ consists of tangents to the orbits of~$G$.
Then we show that $\omega$ descends to a form $\omega_\cF$ on
$U(\sK)/C$ with the required properties.

We consider the short exact sequence
\[
  0\to\Ker A\longrightarrow\R^m\stackrel{A}\longrightarrow
  N_\R\to0
\]
where $A$ is given by $\mb e_i\mapsto\mb a_i$. Since $\Sigma$ is
weakly normal, there is the corresponding polytope~\eqref{ptope}.
We think of $(b_1,\ldots,b_m)$ as a linear function $\mb
b\colon\R^m\to\R$, and denote by $\chi_{\mb b}\colon\Ker A\to\R$
the restriction of $\mb b$ to $\Ker A$.

Let $I\in\sK$ be a maximal simplex (i.e. the vectors $\mb a_i$,
$i\in I$, span a maximal cone of~$\Sigma$), and let $\mb
u_I=\bigcap_{i\in I}F_i$ be the vertex of $P$ corresponding
to~$I$. Define the linear function $\beta_I\in(\R^m)^*$ whose
coordinates in the standard basis are $\langle\mb a_i,\mb
u_I\rangle+b_i$, $i=1,\ldots,m$. It follows that all coordinates
of $\beta_I$ are nonnegative, and the coordinates corresponding to
$i\in I$ are zero. (Some other coordinates of $\beta_I$ may also
vanish, since the polytope $P$ is not necessarily simple and
different $I$ may give the same vertex.) Also, the restriction of
$\beta_I$ to $\Ker A$ is $\chi_{\mb b}$ because $A^*\colon
N_\R^*\to(\R^m)^*$ is given by $\mb u\mapsto(\langle\mb a_1,\mb
u\rangle,\ldots,\langle\mb a_m,\mb u\rangle)$. Finally, by
multiplying all $b_i$ simultaneously by a positive factor we can
obtain that all coordinates of all $\beta_I$ are either zero or
$\ge2$.

We define the function $f\colon U(\sK)\to\R$ as follows:
\[
  f(\mb z\,)=\log \Bigl(\sum_{\text{maximal }I\in\sK}
  |\mb z\,|^{\beta_I}\Bigr)
\]
where $|\mb z\,|^\alpha=|z_1|^{\alpha(\mb e_1)}\cdots
|z_m|^{\alpha(\mb e_m)}$ is the monomial corresponding to a linear
function $\alpha\in(\R^m)^*$, and we set $0^0=1$. By definition of
$U(\sK)$, the set of zero coordinates of any point $\mb z\in
U(\sK)$ is contained in a maximal simplex $I\in\sK$, hence $|\mb
z\,|^{\beta_I}>0$. Therefore, the function $f$ is smooth
on~$U(\sK)$.

Now define the real $(1,1)$-form $\omega=dd^cf$ on $U(\sK)$.
By~\cite[Theorem~I.5.6]{dema12}, the function $f$ is
plurisubharmonic, so that $\omega$ is positive.

\begin{lemma}\label{kerom}
The kernel of $\omega|_{(\C^\times)^m}$ consists of tangent spaces
to the orbits of the action of $G$, see~\eqref{gsigma}.
\end{lemma}
\begin{proof}
Let $J$ be the operator of complex structure. Since $\omega$ is
positive, its kernel coincides with the kernel of the symmetric
2-form $\omega(\,\cdot\,,\,J\cdot\,)$.

Take $\mb z\in(\C^\times)^m$. By writing $\mb z$ in polar
coordinates, $z_k=\rho_ke^{i\varphi_k}$, we decompose the real
tangent space to $(\C^\times)^m$ at $\mb z$ as $T_{\mb
z}=T_{\rho}\oplus T_{\varphi}$, where $T_{\rho}$ and $T_{\varphi}$
consist of tangents to radial and angular directions respectively.
Since $f$ does not depend on the~$\varphi_i$'s, the matrix of
$\omega(\,\cdot\,,\,J\cdot\,)$ is block-diagonal with respect to
the decomposition $T_{\mb z}=T_{\rho}\oplus T_{\varphi}$. The
diagonal blocks are identical since $\omega$ is $J$-invariant. It
follows that $\Ker\omega=(\Ker\omega\cap T_{\rho})\oplus
J(\Ker\omega\cap T_{\rho})$. It remains to describe
$\Ker\omega\cap T_\rho$. To do this, we identify $T_\rho$ with the
Lie algebra $\R^m$ of the group $\R^m_>$ acting on $(\C^\times)^m$
by coordinatewise multiplications.

Take a radial vector field $V\in T_\rho$ corresponding to a
1-parameter subgroup
$t\mapsto(e^{\lambda_1t}z_1,\ldots,e^{\lambda_mt}z_m)$ where
$\lambda=(\lambda_1,\ldots,\lambda_m)\in\R^m$. Then
\begin{multline*}
  \omega(V,JV)=(dd^c f)(V,JV)=
  L_V\langle d^c f,JV\rangle-L_{J\!V}\langle d^c f,V\rangle\\
  =L_{J\!V}\langle df,JV\rangle-
  L_V\langle df,J^2V\rangle
  =\frac{d^2f(e^{\lambda_1t}z_1,\ldots,e^{\lambda_mt}z_m)}{dt^2}
  \Big|_{t=0},
\end{multline*}
where $L_V$ denotes the Lie derivative along~$V$, the second
equality holds because $V$ and $JV$ commute, and $\langle
df,JV\rangle=0$ because $f$ does not depend on angular
coordinates. It remains to calculate the second derivative.
Note that for any linear function $\beta_I\in(\R^m)^*$,  we have
$|\mb z(t)|^{\beta_I}=e^{\langle\beta_I,\lambda\rangle t} |\mb
z|^{\beta_I}$ along the curve $t\mapsto\mb z(t)=
(e^{\lambda_1t}z_1,\ldots,e^{\lambda_mt}z_m)$. Therefore,
\begin{multline*}
  \frac{d^2}{dt^2}f(e^{\lambda_1t}z_1,\ldots,e^{\lambda_mt}z_m)\bigg|_{t=0}=
  \frac d{dt}\biggl(\frac{\sum_I\langle\beta_I,\lambda\rangle e^{\langle\beta_I,\lambda\rangle t} |\mb z|^{\beta_I}}
  {\sum_I e^{\langle\beta_I,\lambda\rangle t} |\mb z|^{\beta_I}}\biggr)\bigg|_{t=0}\\
  =\frac1{\bigl(\sum_I|\mb z\,|^{\beta_I}\bigr)^2}
  \biggl(\sum_I\langle\beta_I,\lambda\rangle^2|\mb z\,|^{\beta_I}\cdot
  \sum_J|\mb z\,|^{\beta_J}
  -\Bigl(\sum_I\langle\beta_I,\lambda\rangle|\mb z\,|^{\beta_I}
  \Bigr)^2\biggr)\\
  =\frac1{\bigl(\sum_I|\mb z\,|^{\beta_I}\bigr)^2}
  \biggl(\sum_{I,J}|\mb z\,|^{\beta_I}|\mb z\,|^{\beta_J}
  \Bigl(\langle\beta_I,\lambda\rangle-
  \langle\beta_J,\lambda\rangle\Bigr)^2\biggr).
\end{multline*}
We claim that this vanishes precisely when $\lambda\in\Ker A$.

If $\lambda\in\Ker A$ then
$\langle\beta_I,\lambda\rangle=\sum_{i=1}^m\lambda_ib_i$ by the
definition of~$\beta_I$, and this is independent of~$I$.
Therefore, the last sum in the displayed formula above vanishes.

Conversely, if the sum above is zero for a given $\lambda\in\R^m$,
then $\langle\beta_I-\beta_J,\lambda\rangle=0$ for any pair of
maximal simplices $I,J\in\sK$ (here we use the fact that $\mb
z\in(\C^\times)^m$). We have $\beta_I-\beta_J=A^*(\mb u_I-\mb
u_J)$, where $\mb u_I-\mb u_J$ is the vector connecting the
vertices $\mb u_I$ and $\mb u_J$ of~$P$. Since $P$ is
$n$-dimensional, the linear span of all vectors $\beta_I-\beta_J$
is the whole $A^*(N^*_\R)$. Thus, $\lambda\in\Ker A$.

We have therefore identified $\Ker\omega\cap T_\rho$ with $\Ker
A\subset\R^m$. On the other hand, $\Ker A$ is the tangent space to
the orbits of $R\subset G$, see~\eqref{rsigma}. Since
$\Ker\omega=(\Ker\omega\cap T_{\rho})\oplus J(\Ker\omega\cap
T_{\rho})$, we can identify $\Ker\omega$ with $\Ker A\oplus J\Ker
A$, which is exactly the tangent space to an orbit of~$G$.
\end{proof}

Now we can finish the proof of Theorem~\ref{form2}. We need to
show that the form $\omega=dd^c f$ descends to a form on
$U(\sK)/C$. In other words, we need to show that $\omega$ is
\emph{basic} with respect to the foliation defined by the orbits
of~$C$, i.e. $\omega(V)=0$ and $L_V\omega=0$ for any vector field
$V$ tangent to $C$-orbits. Since $C\subset G$, the previous lemma
implies that $\Ker\omega$ contains~$V$, so $\omega(V)=0$.
By the Cartan formula, $L_V\omega = (d i_V+i_V
d)\omega=d(i_V\omega)=0$, since $d\omega=0$ and $i_V\omega =
\omega(V)=0$.

Let $\omega_\cF$ be the form obtained by descending $\omega$ to
$U(\sK)/C$. Then $\omega_\cF$ is positive since $\omega$ is
positive, and Lemma~\ref{kerom} implies that $\Ker\omega_{\cF}$
consists exactly of the tangents to the orbits of $G/C$. Thus,
$\omega_\cF$ is transverse K\"ahler for~$\cF$ on
$(\C^\times)^m/C$.

To see that the form $\omega_\cF$ is exact we note that the fibres
of the projection $p\colon U(\sK)\to U(\sK)/C$ are contractible
(as $C\cong\C^\ell$), so $p$ is a homotopy equivalence. Hence $p$
induces an isomorphism of the cohomology groups $p^*\colon
H^*(U(\sK)/C;\R)\stackrel\cong\longrightarrow H^*(U(\sK);\R)$.
Since $p^*\omega_{\cF}=\omega$ and $[\omega]=0$ in
$H^2(U(\sK);\R)$, the form $\omega_{\cF}$ is also exact. One can
actually construct a 1-form $\eta_\cF$ satisfying
$d\eta_\cF=\omega_{\cF}$ more explicitly as follows. Choose a
smooth section $s\colon U(\sK)/C\to U(\sK)$. For any $x\in
U(\sK)/C$ the restriction of $\omega$ to the fibre $p^{-1}(x)$ is
zero, hence $d^c f|_{p^{-1}(x)}$ is closed. Since $p^{-1}(x)$ is
contractible, $d^c f|_{p^{-1}(x)}$ is exact. Hence, $d^c
f|_{p^{-1}(x)}=d g_x$ for some function $g_x\colon p^{-1}(x)\to
\C$ defined up to a constant. We choose $g_x$ such that
$g_x(s(x))=0$. The collection of functions $\{g_x\}_x$ defines a
smooth function $g\colon U(\sK)\to \C$ such that for any $x\in
U(\sK)/C$ we have $dg|_{p^{-1}(x)}=d^c f|_{p^{-1}(x)}$. Then the
1-form $\eta=d^cf-dg$ is basic with respect to the foliation
generated by the fibers of $p$, since
$L_V\eta=(di_V\eta+i_Vd\eta)=i_V\omega=0$ for any vector field $V$
tangent to the fibre at~$p$. Therefore $\eta$ descends to a form
$\eta_\cF$ on $U(\sK)/C$ such that $d\eta_\cF=\omega_{\cF}$.
\end{proof}

\subsection{K\"ahler and Fujiki class $\mathcal C$ subvarieties}
Now we can use the transverse K\"ahler form $\omega_\cF$ to
describe complex submanifolds and analytic subsets in~$\zk$.

\begin{definition}
A compact complex variety $M$ is
said to be of \emph{Fujiki class $\mathcal C$} if it is
bimeromorphic to a K\"ahler manifold.
\end{definition}

\begin{remark}
Since a blow-up of a K\"ahler manifold is again K\"ahler, for each
Fujiki class~$\mathcal C$ variety $X$ there exists a birational
holomorphic map $\widetilde X \to X$, where $\widetilde X$ is a
compact K\"ahler manifold.
\end{remark}

\begin{definition}
Let $M$ be a compact complex manifold, and $\Theta$ a closed
$(1,1)$-current on $M$ (see Subection~\ref{sscurrents}).
Assume that $M$ admits a positive (1,1)-form $\eta$ such that the
current $\Theta -\eta$ is positive. Then $\Theta$ is called a
\emph{K\"ahler current}.
\end{definition}

\begin{theorem}[{\cite[Theorem~0.6]{de-pa04}}]
A compact complex manifold admits a K\"ahler current if and only
if it is of Fujiki class $\mathcal C$.
\end{theorem}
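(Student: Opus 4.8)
The plan is to prove the two implications separately, the forward direction (Fujiki class $\mathcal{C}\Rightarrow$ K\"ahler current) being elementary and the converse being the substantial one. For the forward direction, suppose $M$ is of Fujiki class $\mathcal{C}$. By the Remark preceding the statement there is a proper bimeromorphic morphism $\pi\colon\widetilde M\to M$ with $\widetilde M$ a compact K\"ahler manifold; fix a K\"ahler form $\widetilde\omega$ on $\widetilde M$. I would take $\Theta=\pi_*\widetilde\omega$. Since $\pi$ is proper holomorphic and $\dim\widetilde M=\dim M$, the direct image $\pi_*$ preserves bidegree $(1,1)$, commutes with $d$, and sends positive currents to positive currents; hence $\Theta$ is a closed positive $(1,1)$-current. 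To see it is a K\"ahler current, fix any positive Hermitian form $\eta$ on $M$. Then $\pi^*\eta$ is a smooth semipositive form, so by compactness of $\widetilde M$ there is $\epsilon>0$ with $\widetilde\omega\ge\epsilon\,\pi^*\eta$ as smooth forms. Pushing forward and using the projection formula $\pi_*\pi^*\eta=\eta$ (valid because $\pi$ has degree one), I obtain $\Theta\ge\epsilon\,\eta$, so $\Theta-\epsilon\eta$ is positive and $\Theta$ is a K\"ahler current.

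For the converse, suppose $M$ carries a K\"ahler current $\Theta$, so that $\Theta\ge\eta$ for a positive Hermitian form $\eta$. The strategy is to manufacture a genuine K\"ahler metric on a suitable modification of $M$. First I would invoke Demailly's regularization theorem to replace $\Theta$ by a cohomologous closed positive current $\Theta'$ with analytic singularities, controlling the loss of positivity via Lelong numbers so that $\Theta'\ge\frac12\eta$ remains a K\"ahler current; thus $\Theta'=\theta+dd^c\phi$ with $\theta$ smooth and $\phi$ locally of the form $c\log(\sum_j|g_j|^2)+\text{(smooth)}$. Next, applying Hironaka resolution to the ideal generated by the $g_j$, I obtain a composition of blow-ups $\mu\colon\widetilde M\to M$ along smooth centers so that $\mu^*\Theta'=\alpha+[D]$, where $[D]$ is integration over an effective $\R$-divisor with simple normal crossing support inside the exceptional locus $E=\mathrm{Exc}(\mu)$, and $\alpha=\mu^*\Theta'-[D]$ is a closed \emph{smooth} real $(1,1)$-form. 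Because $\mu^*\Theta'\ge\frac12\mu^*\eta$ and $[D]$ is supported on $E$, the form $\alpha$ dominates $\frac12\mu^*\eta$ on the dense open set $\widetilde M\setminus(E\cup\operatorname{supp}D)$.

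It remains to correct $\alpha$ into a K\"ahler form near $E$. Here I would use the standard fact that for a composition of blow-ups there is an effective $\mu$-exceptional divisor $F$ with $\mathcal{O}(-F)$ relatively $\mu$-ample; choosing a Hermitian metric on $\mathcal{O}(-F)$ whose curvature $\eta_F\in c_1(\mathcal{O}(-F))$ is positive along the fibres of $\mu$, one checks that $\alpha+\epsilon\,\eta_F$ is positive in a neighbourhood of $E$ for small $\epsilon>0$ (fibre directions are handled by $\eta_F$, the remaining directions by the domination of $\mu^*\eta$), while away from $E$ the positivity of $\alpha$ survives the addition of $\epsilon\eta_F$. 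This produces a closed positive form, i.e.\ a K\"ahler metric on $\widetilde M$, so $M$ is bimeromorphic to the compact K\"ahler manifold $\widetilde M$ and hence of Fujiki class $\mathcal{C}$.

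The main obstacle is the regularization step: approximating an arbitrary K\"ahler current by one with analytic singularities without destroying its strict positivity is precisely Demailly's deep theorem, resting on $L^2$ estimates (Ohsawa--Takegoshi extension and Bergman-kernel asymptotics). It is this input, rather than the resolution or the final gluing, that carries the analytic weight of the argument; the resolution and the relative-ampleness correction are comparatively formal once the current has been brought into analytic-singularity form.
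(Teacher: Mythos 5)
The paper offers no proof of this statement to compare against: it is imported verbatim from Demailly--Paun \cite{de-pa04} and used as a black box. What you have written is, in substance, a correct reconstruction of the argument from that reference, so the right verdict is ``correct, but proving what the paper only cites.'' Your easy direction --- pushing forward a K\"ahler form, $\Theta=\pi_*\widetilde\omega$, and getting $\Theta\ge\varepsilon\eta$ from $\widetilde\omega\ge\varepsilon\,\pi^*\eta$ plus the degree-one projection formula --- is sound, and is the same device the paper itself uses in Proposition~\ref{_posi_currents_Claim_} to manufacture positive $(k,k)$-currents on Fujiki class $\mathcal C$ varieties (the paper stops short of the K\"ahler-current estimate, which you supply). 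Your hard direction has the correct architecture: Demailly regularization to reach analytic singularities while retaining $\Theta'\ge\frac12\eta$, Hironaka principalization giving $\mu^*\Theta'=\alpha+[D]$ with $\alpha$ smooth and closed, and a correction by a $\mu$-ample exceptional class. Two points deserve tightening. First, the support of $D$ need not lie in the exceptional locus: the ideal defining the singularities of $\Theta'$ may have divisorial components in $M$ itself, whose strict transforms appear in $D$; this is harmless, since nothing in your argument uses that claim. Second, the final positivity check is cleanest as one uniform inequality rather than the informal fibre-versus-horizontal discussion: $\alpha\ge\frac12\mu^*\eta$ holds everywhere by continuity (it holds on a dense open set and both sides are smooth); relative ampleness of $\mathcal O(-F)$ plus compactness give a constant $C>0$ with $\eta_F+C\mu^*\eta>0$ on all of $\widetilde M$; and then
\[
  \alpha+\varepsilon\eta_F\;\ge\;\tfrac12\mu^*\eta+\varepsilon\eta_F
  \;\ge\;\varepsilon\bigl(C\mu^*\eta+\eta_F\bigr)\;>\;0
  \qquad\text{for }\varepsilon\le 1/(2C),
\]
so $\alpha+\varepsilon\eta_F$ is K\"ahler on $\widetilde M$. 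With these repairs the proof is complete modulo the two named black boxes (Demailly's regularization theorem and Hironaka resolution), which is exactly the logical structure of the original proof, and you correctly identify regularization as the step carrying the analytic weight.
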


We will use a modification of this statement for Fujiki class $\mathcal C$
\emph{varieties} to prove the following:

%

\begin{theorem}\label{kahlsub}
Under the assumptions of Theorem~{\rm\ref{form2}}, any Fujiki
class $\mathcal C$ subvariety of $\zk= U(\sK)/C$ is contained in a 
leaf of the $\ell$-dimensional foliation~$\cF$.
\end{theorem}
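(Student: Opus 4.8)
The plan is to show first that every irreducible Fujiki class $\mathcal C$ subvariety $Z\subset\zk$, say of complex dimension $d$, must be tangent to the foliation $\cF$, and only afterwards to identify $Z$ inside the leaf containing it. So I would reduce at once to $Z$ irreducible (treating the components separately) and set up the dichotomy: either $Z$ meets the dense orbit $(\C^\times)^m/C$, or $Z$ is contained in a proper coordinate submanifold $\mathcal Z_{\mathcal K_J}$.

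For the tangency step I would exploit the \emph{exactness} of $\omega_\cF$ from Theorem~\ref{form2}. Since $Z$ is of Fujiki class $\mathcal C$, there is a compact Kähler manifold $\widetilde Z$ with Kähler form $\omega_{\widetilde Z}$ and a holomorphic birational map $\phi\colon\widetilde Z\to Z\subset\zk$. Writing $\omega_\cF=d\alpha$, the form $\phi^*\omega_\cF\wedge\omega_{\widetilde Z}^{d-1}=d(\phi^*\alpha\wedge\omega_{\widetilde Z}^{d-1})$ is exact on the compact manifold $\widetilde Z$ (here $\omega_{\widetilde Z}^{d-1}$ is closed), so $\int_{\widetilde Z}\phi^*\omega_\cF\wedge\omega_{\widetilde Z}^{d-1}=0$ by Stokes. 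On the other hand $\phi^*\omega_\cF$ is a semipositive $(1,1)$-form, so $\phi^*\omega_\cF\wedge\omega_{\widetilde Z}^{d-1}$ is a nonnegative multiple of the volume form $\omega_{\widetilde Z}^{d}$, the multiplier being the $\omega_{\widetilde Z}$-trace of $\phi^*\omega_\cF$. A nonnegative top form with zero integral vanishes identically, and the trace of a semipositive form vanishes only when the form itself does, so I conclude $\phi^*\omega_\cF\equiv0$ on $\widetilde Z$. (Equivalently, one may pair the positive closed current $\phi_*\omega_{\widetilde Z}^{d-1}$ supplied by Proposition~\ref{_posi_currents_Claim_} against $\omega_\cF$.) Because $\phi$ is a biholomorphism over a dense open subset of $Z$, this gives $T_zZ\subset\Ker\omega_\cF$ at every smooth point $z$ of $Z$.

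Now I would run the dichotomy. If $Z$ meets $(\C^\times)^m/C$, then at a generic point $\Ker\omega_\cF=T\cF$ by Lemma~\ref{kerom}, so $T_zZ\subset T_z\cF$ there; analytic continuation of this tangency condition forces the irreducible $Z$ into a single leaf $L$ of $\cF$. If instead $Z$ lies in a proper coordinate submanifold, I use that $G\subset(\C^\times)^m$ preserves the coordinate strata, so $\cF$ restricts to the moment-angle submanifold $\mathcal Z_{\mathcal K_J}=U(\sK_J)/C$ and $\omega_\cF$ restricts there to an exact form that is transverse Kähler on its dense orbit; an induction on $\dim\zk$ then reduces to the previous case. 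Either way $Z$ is contained in one leaf $L$, which by Proposition~\ref{leaves}(c) is the compact complex torus $\C^\ell/p(\Gamma_I)$ (when $p(\Gamma_I)$ is not of full rank, $Z$ lands in the maximal compact subtorus of $L$).

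The remaining, and I expect hardest, step is to show that the Fujiki class $\mathcal C$ subvariety $Z$ of the torus $L$ is itself a translate of a subtorus. The tangency argument gives no further leverage here, since $\omega_\cF$ vanishes identically on $L$, so this is a purely torus-theoretic statement. I would factor $\phi\colon\widetilde Z\to L$ through the Albanese torus of $\widetilde Z$ and invoke the structure theory of subvarieties of complex tori: $Z$ carries a fibration by translates of a subtorus $T_0\subset L$ with base $W=Z/T_0$ of general type, and $Z$ is a subtorus exactly when $\dim W=0$. Ruling out a positive-dimensional, necessarily projective, general-type base $W$ is precisely where the genericity of the complex structure (the choice of $\varPsi$) must enter: for generic $\varPsi$ the leaves are complex tori of algebraic dimension zero, whose only positive-dimensional subvarieties are subtori, which forces $W$ to be a point and $Z$ a subtorus. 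I therefore expect this torus-structure step, rather than the clean exactness-plus-positivity argument of the second paragraph, to be the main obstacle.
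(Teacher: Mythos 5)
Your first two steps are, up to repackaging, the paper's own proof. The paper pairs the pushforward current $\Theta_Y(k)$ of Proposition~\ref{_posi_currents_Claim_} against powers of the exact form $\omega_\cF=d\alpha$ and applies Stokes; you pull $\omega_\cF$ back to the K\"ahler resolution and wedge with $\omega_{\widetilde Z}^{d-1}$, which is the same pairing read in the opposite direction, as your own parenthetical remark acknowledges. Likewise, the inductive reduction to the case where the subvariety meets the dense orbit $(\C^\times)^m/C$, and the use of Lemma~\ref{kerom} to convert $\Ker\omega_\cF$ into $T\cF$ there, are exactly the paper's. So up to the conclusion that $Z$ lies in a single leaf of $\cF$, your proposal and the paper agree.

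The divergence is the final step, and there you do not reproduce the paper's argument: the paper invokes neither Ueno's fibration theorem nor any genericity of $\varPsi$. It argues in two sentences that a leaf has the form $\C^\ell/\Gamma$ by Proposition~\ref{leaves}(c), that $\C^\ell/\Gamma$ covers a compact torus $T_\C^\ell$ (enlarge $\Gamma$ to a full-rank lattice), that a K\"ahler subvariety of $\C^\ell/\Gamma$ therefore ``covers a subtorus of $T_\C^\ell$'', and that compactness then forces it to be a torus. Measured against the statement as given, your proposal thus has a gap exactly where you flag one: you only complete the leaf-to-torus step under a genericity hypothesis on $\varPsi$ that is not among the assumptions of Theorem~\ref{form2}. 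But your suspicion about this step is mathematically well founded. The paper's assertion is offered without proof, and when $\Gamma$ has full rank --- which does occur under weak normality alone, namely in the rational case where the leaves are the compact fibres of $\zk\to V_\Sigma$ --- it amounts to the claim that every K\"ahler subvariety of a compact complex torus is a subtorus. That claim fails whenever the fibre can be chosen abelian: for Calabi--Eckmann-type data over $(\C P^1)^4$ one may choose $\varPsi$ so that the fibre is a product of two elliptic curves, and a smooth ample divisor in that fibre is then a projective curve of genus at least two sitting inside $\zk$, contained in a leaf but not a torus. So the step genuinely needs an input of the kind you propose (non-compactness of generic leaves, or algebraic dimension zero of the compact subtori inside leaves), which is in the spirit of the hypotheses the paper itself adds in Lemma~\ref{generic_cond} and Theorem~\ref{final}. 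In short: where the paper's proof is solid you match it; where you deviate, you have put your finger on the weak point of the paper's own argument, and your Ueno-plus-genericity route is a legitimate repair --- but it proves a conditional statement, not the theorem as stated, and you should say so explicitly rather than leave the hypothesis implicit.
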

\begin{proof}
Let $i\colon Y\hookrightarrow U(\sK)/C$ be a Fujiki class
$\mathcal C$ subvariety. We may assume that $Y$ is not contained
in a coordinate submanifold of $\zk$, i.\,e., $Y$ contains a point
from $(\C^\times)^m/C$. (Otherwise choose the smallest by
inclusion coordinate submanifold $\mathcal
Z_{\sK_J}=U(\sK_J)/C\subset U(\sK)/C$ such that $Y\subset\mathcal
Z_{\sK_J}$, and consider $\mathcal Z_{\sK_J}$ instead of~$\zk$.)

We claim that $Y$ is
contained in a $D$-orbit (a leaf of~$\cF$).
Suppose this is not the case. Then
\[
  k=\min_{y\in Y_{\rm reg}}\dim (T_y Y\cap T_y\cF)<\dim Y,
\]
and the minimum is achieved at a generic point $y\in Y$.

Consider a smooth K\"ahler birational modification
$(\widetilde{Y},\omega_{\widetilde{Y}})$ of $Y$ with a birational
map $p\colon \widetilde{Y}\to Y$. Let $\omega_\cF$ be the
transverse K\"ahler form constructed in Theorem~\ref{form2}, and
let $\widetilde\omega_\cF=p^*i^*\omega_\cF$ be its pullback to
$\widetilde{Y}$. Consider the top-degree differential form
\[
\nu=\widetilde\omega_\cF^{\dim Y-k}\wedge \omega^k_{\widetilde Y}
\]
on $\widetilde{Y}$. Then $\nu$ is a positive measure, since
$\omega_\cF$, $\widetilde\omega_\cF$ and $\omega_{\widetilde Y}$
are all positive (1,1)-forms. We claim that $\nu$ is nontrivial.
Take a generic $y\in Y_{\rm reg}$, such that $p$ is an isomorphism
in a neighbourhood of $p^{-1}(y)$ and $\dim (T_y Y\cap T_y\cF)=k$.
We claim that $\nu$ is nonzero (strictly positive) at $p^{-1}(y)$.
Indeed, $i^*\omega_\cF$ is a restriction of the transverse
K\"ahler form $\omega_\cF$, so it is strictly positive on a $(\dim
Y-k)$-dimensional subspace transverse to its $k$-dimensional null
space $T_y Y\cap T_y\cF\subset T_y Y$. Hence,
$\widetilde\omega_\cF$ is also positive on the corresponding
subspace of $T_{p^{-1}(y)}\widetilde Y$. Since
$\omega_{\widetilde{Y}}$ is K\"ahler and positive in all
directions, $\nu$ is strictly positive at $p^{-1}(y)$.

Now recall that $\widetilde\omega_\cF=d\alpha$ is exact, so by the Stokes formula,
\[
  \int_{\widetilde Y} \widetilde\omega_\cF^{\dim Y-k}\wedge \omega^k_{\widetilde Y}=
  \int_{\widetilde Y} d(\alpha\wedge\widetilde\omega_\cF^{\dim Y-k-1}\wedge \omega^k_{\widetilde Y})
  =0.
\]
On the other hand, the measure $\nu$ is positive and nonzero at a
point $p^{-1}(y)$, leading to a contradiction. Hence $k=\dim Y$
and $Y$ is contained in a leaf of~$\cF$.
%
\end{proof}

\begin{corollary}\label{leavescor}
Assume that the subspace $\Ker A\subset \R^m$ does not contain
rational vectors. Then there are no positive dimensional Fujiki
class $\mathcal{C}$ subvarieties through  a generic point
of~$\zk$. More precisely, there are no such subvarieties
intersecting nontrivially the open subset $(\C^\times)^m/C\subset
U(\sK)/C=\zk$.
\end{corollary}
\begin{proof}
If $\Ker A\subset \R^m$ does not contain rational vectors, then,
in the notation of Proposition~\ref{leaves}, the group $\Gamma$ is
trivial. Therefore, any leaf in the open part
$(\C^\times)^m/C\subset U(\sK)/C$ of the foliation $\cF$ is
isomorphic to~$\C^\ell$, and therefore cannot contain Fujiki
class~$\mathcal C$ subvarieties.
\end{proof}

\subsection{The case of 1-dimensional foliation}
This case was studied by Loeb and Nicolau~\cite{lo-ni99}. Here is
how their results translate into our setting:

\begin{theorem}\label{onedim}
Assume that the foliation $\cF$ is $1$-dimensional, i.e. $\ell=1$.
\begin{itemize}
\item[(a)]
If $\Ker A\subset\R^m$ is a rational subspace, then any analytic
subset of $\zk$ is either a point, or has the form $\pi^{-1}(X)$,
where $\pi\colon U(\sK)/C\to V$ is the principal Seifert
bundle over the variety $V=U(\sK)/G$ and $X\subset
V$ is a subvariety;

\item[(b)]
If no rational linear function on $\R^m$ vanishes on $\Ker A$,
then any irreducible analytic subset of $\zk$ is either a
coordinate submanifold or a point.
\end{itemize}
\end{theorem}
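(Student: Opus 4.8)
The plan is to reduce to the dense orbit and then import the dynamical analysis of Loeb and Nicolau~\cite{lo-ni99}, whose linear holomorphic flow is exactly the foliation $\cF$ of Construction~\ref{Ffoli} in the chart $(\C^\times)^m/C$. First I would dispose of the easy reduction. If an irreducible analytic $Y\subset\zk$ is contained in a proper coordinate submanifold $\mathcal Z_{\sK_J}=U(\sK_J)/C$, then, since this is again a moment-angle manifold carrying the restricted foliation (and $\pi$ restricts to the Seifert fibration over the corresponding toric stratum), I conclude by induction on $m$, the base case being the torus of Example~\ref{2torus}. It therefore suffices to treat an irreducible $Y$ meeting the open orbit $(\C^\times)^m/C$, and to show that in case (a) such a $Y$ is a point or equals $\pi^{-1}(\pi(Y))$, and in case (b) that the only such $Y$ are points and $\zk$ itself (the coordinate submanifold for $J=[m]$).

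Part (a). When $\Ker A$ is rational, $G$ is closed and isomorphic to $(\C^\times)^2$, the leaves of $\cF$ are the compact elliptic curves $G/C$, and $\pi\colon\zk\to V_\Sigma=U(\sK)/G$ is a holomorphic principal Seifert fibration (the remark after Proposition~\ref{leaves}). Given irreducible $Y$ meeting the open orbit, I set $X=\pi(Y)$, an irreducible subvariety of $V_\Sigma$, and compare dimensions. If $\dim Y>\dim X$, the generic fibre of $\pi|_Y$ is a positive-dimensional analytic subset of an elliptic curve, hence the whole fibre; taking closures gives $Y=\pi^{-1}(X)$. If $\dim Y=\dim X$, then $\pi|_Y$ is generically finite and $Y$ is a genuine multisection, and excluding this is the crux of (a). Here I would argue that a degree-$d$ multisection trivialises the principal $D$-bundle $\pi^{-1}(X)\to X$ after the finite base change $Y\to X$, so its classifying class in $H^1(X,\underline D)$ is $d$-torsion; since the bundles attached to rational fans are non-torsion (cf.~\cite{me-ve04} and \cite[Proposition~5.2]{pa-us12}), no multisection exists, and this case occurs only when $X$ and $Y$ are points.

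Part (b). The hypothesis that no rational linear function vanishes on $\Ker A$ says precisely that no nontrivial character of $\T^m$ is trivial on the angular subgroup $\exp(i\Ker A)$; hence $\overline{\exp(i\Ker A)}=\T^m$ and $\overline G=R\cdot\T^m$. Consequently the closure of a generic leaf of $\cF$ contains a full orbit $\T^m z$ of the compact torus, where $z\in(\C^\times)^m$ lifts a generic point of $Y$. Now let $Y$ be irreducible, not a point, and meeting the open orbit, with lift $\widehat Y\subset U(\sK)$ a closed, $C$-invariant analytic set. If $\cF$ is tangent to $Y$ generically, then $\widehat Y$ is $G$-invariant, hence contains $\overline{Gz}=\overline G z\supseteq\T^m z$; since $\T^m z$ is a maximal totally real submanifold of $(\C^\times)^m$, the identity theorem forces $\widehat Y\supseteq(\C^\times)^m$, so $\widehat Y=U(\sK)$ and $Y=\zk$. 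If instead $\cF$ is transverse to $Y$, I would invoke the recurrence argument of~\cite{lo-ni99}: the density of the leaves, together with the holomorphy of the flow and the compactness of $Y$, is incompatible with a proper transverse analytic subset. Thus no proper irreducible $Y$ meets the open orbit, every such $Y$ lies in a coordinate submanifold, and the induction finishes the proof.

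The main obstacle is the exclusion of the transverse case in each regime. Because we do not assume the fan to be weakly normal, the exact transverse K\"ahler form of Theorem~\ref{form2} — and hence the Stokes argument of Theorem~\ref{kahlsub} and of the general Theorem~\ref{final} — is unavailable here; the transverse case must instead be handled by the genuinely dynamical input of~\cite{lo-ni99} (non-torsion of the bundle class for (a), and minimality of the foliation for (b)), which is the essential content imported from the one-dimensional theory.
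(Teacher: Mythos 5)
Your proposal has a genuine gap, and it stems from one missed observation that is the very first line of the paper's proof: when $\ell=1$ (i.e.\ $m-n=2$), every complete simplicial fan is automatically \emph{normal} --- the corresponding polytope $P$ can be obtained by truncating an $n$-simplex at a vertex. Consequently the hypothesis of Theorem~\ref{form2} \emph{is} satisfied, and the exact transverse K\"ahler form $\omega_\cF$ is available; your closing assertion that it is ``unavailable here'' is simply wrong. The paper's proof uses exactly this form: if $Y\subset\zk$ is a positive-dimensional analytic subset that does not consist of leaves of $\cF$, then $\int_Y\omega_\cF^{\dim Y}>0$, contradicting exactness of $\omega_\cF$ via Stokes. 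Hence every positive-dimensional $Y$ is $D$-invariant, and the two cases are finished by group theory: in (a) the subgroup $D=G/C$ is closed, so $Y=\pi^{-1}(Y/D)$; in (b) the minimal closed \emph{complex} subgroup containing $D$ is all of $(\C^\times)^m/C$ (by the same rationality argument you use), so $Y$ is $(\C^\times)^m/C$-invariant and therefore a coordinate submanifold.

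Because you discarded the form, you were forced to handle the generically transverse configurations by other means, and both substitutes are gaps. In (a), your exclusion of a generically finite $\pi|_Y$ (a multisection) rests on the claim that the class of the fibration in $H^1(X,\underline D)$ is non-torsion; this is nowhere proved, it is delicate because $\pi$ is only a \emph{Seifert} fibration over a possibly singular $V_\Sigma$ (so the principal-bundle and transfer formalism you invoke does not apply as stated), and the cited references do not contain it in the form you need. In (b), the transverse case is dispatched by appeal to ``the recurrence argument of~\cite{lo-ni99}'' with no actual argument; density of the leaf closures by itself is not obviously incompatible with a compact analytic subset transverse to $\cF$ --- that incompatibility is precisely what the exactness of $\omega_\cF$ supplies. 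Your reduction to the dense orbit and your treatment of the tangent case in (b) (topological closure $\overline G=R\cdot\T^m$, a totally real orbit $\T^m z$ forcing the lift of $Y$ to be all of $U(\sK)$) are sound, but the core of the theorem --- that no proper positive-dimensional subvariety can be generically transverse to $\cF$ --- is not established in your argument.
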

\begin{proof}
First observe that if $m-n=2\ell=2$, then the fan $\Sigma$ is
normal. (Indeed, the corresponding polytope $P$ can be obtained by
truncating an $n$-simplex at a vertex.) Therefore, by
Proposition~\ref{trkah}, there exists a transverse K\"ahler
form~$\omega_\cF$. Let $Y$ be an analytic subset  of positive
dimension in $\zk$. Then $Y$ consists of leaves of the foliation
$\cF$, as otherwise the integral $\int_Y \omega_\cF^{\dim Y}$ of
the exact form $\omega_\cF^{\dim Y}$ over $Y$ is positive. In
other words, $Y$ is $D$-invariant.

Under assumption~(a), both $G\subset (\C^\times)^m$ and
$D=G/C\subset (\C^\times)^m/C$ are closed subgroups and $Y$ has
the form $\pi^{-1}(Y/D)$, where $Y/D \subset U(\sK)/G = V$.

When each vector $\mb a_i$ is primitive in the lattice
$\Z\langle\mb a_1,\dots,\mb a_m\rangle$, the variety $V$ is the
toric variety $V_\Sigma$ corresponding to the rational
fan~$\Sigma$. In general, $V$ is a finite branched covering
over~$V_\Sigma$, see~\cite[Example 2.8]{me-ve04}.


Under assumption (b), we claim that the minimal closed complex
subgroup $\overline D$ containing $D$ is the whole
$(\C^\times)^m/C$. This claim is equivalent to that $\overline
G=(\C^\times)^m$. Indeed, since $\overline G$ is closed, the
intersection $\mathop{\mathrm{Lie}}\overline G\cap i\R^m$ is a
rational subspace. Now if $\mathop{\mathrm{Lie}}\overline G\cap
i\R^m\ne i\R^m$, then there exists a rational linear function
vanishing on $\mathop{\mathrm{Lie}}\overline G$ and therefore on
$\mathop{\mathrm{Lie}}G=\Ker A_\C$, leading to a contradiction.
Hence $\mathop{\mathrm{Lie}}\overline G\cap i\R^m= i\R^m$, i.e.
$\mathop{\mathrm{Lie}}\overline G=\C^m$, and the claim is proved.
It follows that the subset $Y$ is $(\C^\times)^m/C$-invariant. An
irreducible $(\C^\times)^m/C$-invariant analytic subset of
$U(\sK)/C$ is a coordinate submanifold.
\end{proof}


\begin{example}[Hopf manifold]\label{hopf}
Let $\mb a_1,\ldots,\mb a_{n+1}$ be a set of vectors which span
$N_\R\cong\R^n$ and satisfy a linear relation $\lambda_1\mb
a_1+\cdots+\lambda_{n+1}\mb a_{n+1}=\mathbf0$ with all
$\lambda_k>0$. Let $\Sigma$ be the complete simplicial fan in
$N_\R$ whose cones are generated by all proper subsets of $\mb
a_1,\ldots,\mb a_{n+1}$. To make $m-n$ even we add one ghost
vector $\mb a_{n+2}$. Hence $m=n+2$, $\ell=1$, and we have one
more linear relation $\mu_1\mb a_1+\cdots+\mu_{n+1}\mb a_{n+1}+\mb
a_{n+2}=\mathbf0$, this time the $\mu_k$'s are arbitrary reals.

The subspace $\Ker A\subset\R^{n+2}$ is spanned by
$(\lambda_1,\ldots,\lambda_{n+1},0)$ and
$(\mu_1,\ldots,\mu_{n+1},\!1)$.

Then $\sK=\sK_\Sigma$ is the boundary of an $n$-dimensional
simplex with $n+1$ vertices and one ghost vertex, $\zk\cong
S^{2n+1}\times S^1$, and
$U(\sK)=(\C^{n+1}\setminus\{\bf0\})\times\C^\times$.

Conditions~(a) and~(b) of Construction~\ref{psi} imply that $C$ is
a 1-dimensional subgroup in $(\C^\times )^m$ given in appropriate
coordinates by
\[
  C=\bigl\{(e^{\zeta_1 w},\ldots,e^{\zeta_{n+1}w},e^w)
  \colon w\in\C\bigl\}\subset(\C^\times )^m,
\]
where $\zeta_k=\mu_k+\alpha\lambda_k$ for some
$\alpha\in\C\setminus\R$. By changing the basis of $\Ker A$ if
necessary, we may assume that $\alpha=i$. The moment-angle
manifold $\zk\cong S^{2n+1}\times S^1$ acquires a complex
structure as the quotient $U(\sK)/C$:
\begin{multline*}
  \bigl(\C^{n+1}\setminus\{\mathbf 0\}\bigr)\times\C^\times\bigl/\;
  \bigl\{(z_1,\ldots,z_{n+1},t)\!\sim
  (e^{\zeta_1w}z_1,\ldots,e^{\zeta_{n+1}w}z_{n+1},
  e^w t)\bigr\}
  \\
  \cong\bigl(\C^{n+1}\setminus\{\mathbf0\}\bigr)\bigl/\;
  \bigl\{(z_1,\ldots,z_{n+1})\!\sim
  (e^{2\pi i\zeta_1}z_1,\ldots,
  e^{2\pi i\zeta_{n+1}}z_{n+1})\bigr\},
\end{multline*}
where $\mb z\in\C^{n+1}\setminus\{\mathbf0\}$, $t\in\C^\times$.
The latter is the quotient of $\C^{n+1}\setminus\{\mathbf0\}$ by a
diagonalisable action of $\Z$. It is known as a \emph{Hopf
manifold}. For $n=0$ we obtain a complex 1-dimensional torus
(elliptic curve) of Example~\ref{2torus}.

Suppose we are in the situation of Theorem~\ref{onedim}~(a). Then
all $\lambda_k$ are commensurable (the ratio of each pair is
rational). We can assume that all $\lambda_k$ are integer
(multiplying them by a common factor if necessary). Then $\Sigma$
is a rational fan and $V_\Sigma=\C
P^n(\lambda_1,\dots,\lambda_{n+1})$ is a weighted projective
space, whose orbifold structure may have singularities in
codimension one. This gives rise to a holomorphic principal
Seifert bundle $\pi\colon\zk\to\C
P^n(\lambda_1,\dots,\lambda_{n+1})$ with fibre an elliptic curve.

Now suppose we are in the situation of Theorem~\ref{onedim}~(b).
For example, this is the case when
$\lambda_1,\ldots,\lambda_{m+1}$ are linearly independent
over~$\Q$. Then any submanifold of $\zk$ is a Hopf manifold of
lesser dimension (including elliptic curves and points).
\end{example}

\subsection{Divisors and meromorphic functions}
For simplicity, by a \emph{divisor} on a complex manifold we mean
an analytic subset of codimension one. (The description of
divisors in~$\zk$ obtained below can be easily modified to
match the more standard definition of divisors as
linear combinations of analytic subsets of codimension one.)

For generic initial data, there are only finitely many divisors on
the complex moment-angle manifold~$\zk$, and they are of a very
special type. This holds without any geometric restrictions on the
fan:

\begin{theorem}\label{nodiv}
Assume that the data $\{\sK;\mb a_1,\ldots,\mb a_m\}$ define a
complete fan~$\Sigma$ in~$N_\R\cong\R^n$, and $m-n=2\ell$. Assume
further that
\begin{itemize}
\item[(a)] there is at most one ghost vertex in~$\sK$;

\item[(b)] no rational linear function on $\R^m$ vanishes identically on $\Ker A$.
\end{itemize}
Then any divisor of $\zk$ is a union of coordinate divisors.
\end{theorem}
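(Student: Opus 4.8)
The plan is to reduce to an irreducible $C$-invariant hypersurface in $U(\sK)$ and to show, using compactness of the divisor together with hypothesis~(b), that such a hypersurface cannot meet the open orbit $(\C^\times)^m$, so that every irreducible component is a coordinate hyperplane. First I would lift a divisor $X\subset\zk\cong U(\sK)/C$ along the free and proper quotient map to a $C$-invariant analytic hypersurface $\tilde X\subset U(\sK)$. By hypothesis~(a) the set $\C^m\setminus U(\sK)$ is a union of coordinate subspaces, all of codimension $\ge 2$ except for at most one coordinate hyperplane coming from the single admissible ghost vertex; hence $\tilde X$ extends across the codimension-$\ge2$ part and, by Remmert--Stein, its closure in $\C^m$ is an analytic hypersurface. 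Since $C$ is connected it preserves each irreducible component, so it suffices to treat an irreducible $C$-invariant hypersurface $Y=\{g=0\}$ with $g$ entire and irreducible on $\C^m$ (using that $\C^m$ is contractible, so every hypersurface is principal). If $Y$ is a coordinate hyperplane we are done; otherwise $Y$ meets $(\C^\times)^m$ in a dense subset, and the aim is to reach a contradiction.

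Compactness of $X$ enters next. The map $L(z)=(\log|z_1|,\dots,\log|z_m|)$ composed with $A\colon\R^m\to N_\R$ is $\T^m$-invariant, and it is $C$-invariant because $A(\log|c|)=0$ for $c\in C$ (indeed $\log|c|\in\Re\mathfrak c=\Ker A$ by condition~(b) of Construction~\ref{psi}). Thus $A\circ L$ descends to the compact quotient $X/\T^m$ and so is bounded on $\tilde X$; equivalently, the amoeba $L\bigl(Y\cap(\C^\times)^m\bigr)$ lies within bounded distance of the subspace $\Ker A$. Consequently, along every ray transverse to $\Ker A$, that is in any direction $A^*\mb u$ with $\mb u\in N_\R^*$, the function $g$ has no zeros far out. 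I would use this to conclude that, after removing a monomial factor, $g$ is a Laurent polynomial, and then that $C$-invariance promotes the quasi-character $g(cz)/g(z)$ to an honest character: $g(cz)=\chi(c)\,g(z)$ for all $c\in C$, so that $g$ is a $C$-eigenfunction with finite support.

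The irrationality hypothesis~(b) then finishes the argument. Being a $\chi$-eigenfunction, $g$ is supported on those exponents $\alpha\in\Z_{\ge0}^m$ with $\langle\xi,\alpha\rangle$ independent of $\alpha$ for all $\xi\in\mathfrak c$; hence for any two exponents the difference $\alpha-\alpha'$ is an integral real vector annihilating $\mathfrak c$. Conjugating, it also annihilates $\overline{\mathfrak c}$, hence $\mathfrak c+\overline{\mathfrak c}=\Ker A_\C$, so $\alpha-\alpha'\in A^*(N_\R^*)\cap\Z^m$. But condition~(b) says exactly that no nonzero rational vector annihilates $\Ker A$, i.e. $A^*(N_\R^*)\cap\Z^m=\{\mathbf 0\}$; therefore the support of $g$ is a single exponent and $g$ is a monomial. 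Then $Y$ is a union of coordinate hyperplanes and does not meet $(\C^\times)^m$, contradicting the standing assumption. Hence every component of $\tilde X$ is a coordinate hyperplane and $X=\tilde X/C$ is a union of coordinate divisors.

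I expect the main obstacle to be the middle step: converting compactness of $X$ into the statement that $g$ is a $C$-eigen Laurent polynomial. Boundedness of the amoeba near $\Ker A$ is the right input, but extracting finiteness of the support---equivalently, showing that the stabiliser of $Y$ in $(\C^\times)^m$ is a closed complex subgroup containing $C$ whose minimal closed \emph{conjugation-invariant} complex overgroup is all of $(\C^\times)^m$ by~(b)---requires care with the analytic, non-algebraic nature of $\tilde X$; amoeba and Ronkin-function estimates, or a relative-compactness argument in the Barlet cycle space (using a Gauduchon metric, which always exists), are the natural tools. Once finiteness is in hand, hypothesis~(b) is used only through the clean linear-algebra identity $A^*(N_\R^*)\cap\Z^m=\{\mathbf 0\}$, and the argument uses no transverse K\"ahler form, in accordance with the statement.
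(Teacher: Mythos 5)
Your endgame is correct and genuinely different from the paper's route: \emph{if} the lifted divisor were the zero set of a $C$-eigen Laurent polynomial $g$, then differences of exponents in the support of $g$ would annihilate $\mathfrak c\oplus\overline{\mathfrak c}=\Ker A_\C$, hence be rational functionals vanishing on $\Ker A$, hence zero by hypothesis (b), so $g$ would be a monomial. But the step you rely on to get there --- ``$A\circ L$ descends to the compact quotient $X/\T^m$ and so is bounded on $\tilde X$'' --- is false, not merely incomplete. The function $A\circ L$ is defined only on $\tilde X\cap(\C^\times)^m$, and it descends only to the \emph{open} subset $X\cap\bigl((\C^\times)^m/C\bigr)$ of $X$, which is not compact; near points of $X$ lying on coordinate divisors, $\sum_i\log|z_i|\,\mb a_i$ blows up. Here is a concrete counterexample to the amoeba claim, in a situation where your two inputs (compactness and $C$-invariance) hold: take the classical Hopf manifold of Example~\ref{hopf} with all $\zeta_k$ equal and $n\ge2$. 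The compact divisor $\{z_1=z_2\}$ pulls back to the $C$-invariant hypersurface $\{z_1=z_2\}\times\C^\times\subset U(\sK)$, whose amoeba is the hyperplane $\{x_1=x_2\}\subset\R^m$; this hyperplane is not within bounded distance of the $2$-dimensional subspace $\Ker A$, and indeed $A\circ L=\log\epsilon\cdot\mb a_3\to\infty$ along the divisor points with $\mb z=(1,1,\epsilon,\ldots)$ as $\epsilon\to0$. Since your derivation of the amoeba bound uses only compactness and $C$-invariance, and both hold in this example, the derivation cannot be valid; and without it you have no route to finiteness of the support of $g$, which is exactly where the content of the theorem sits. (A secondary, smaller problem: in the ghost-vertex case $\C^m\setminus U(\sK)$ contains a coordinate \emph{hyperplane}, and Remmert--Stein does not extend an analytic set across an exceptional set of the same dimension; the paper avoids this by writing $U(\sK)=\C^\times\times U(\widetilde\sK)$ and slicing.)

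For comparison, the paper's mechanism is a reduction to the $\ell=1$ case: completeness of $\Sigma$ gives a relation $\sum_k\lambda_k\mb a_k=\mathbf0$ with all $\lambda_k>0$, hence an element $\mb u\in C$ with all $|u_k|>1$; the cyclic group $L=\langle\mb u\rangle$ acts on $\C^m\setminus\{\mathbf0\}$ with quotient a Hopf manifold, i.e.\ a moment-angle manifold with $\ell=1$ whose fan is automatically normal, and the closure of $q^{-1}(\mathcal D)$ descends to an $L$-invariant analytic subset there, which Theorem~\ref{onedim}(b) forces to be a union of coordinate divisors. Theorem~\ref{onedim}(b) is itself proved using the exact transverse K\"ahler form of Theorem~\ref{form2} (on the Hopf manifold, not on $\zk$) together with the fact that, under the irrationality hypothesis, the closure of $G$ is all of $(\C^\times)^m$. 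Some avatar of that exactness argument is what your middle step would have to supply; compactness of the divisor alone cannot do it.
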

\begin{proof}
Let $\mathcal D\subset \zk$ be a divisor.  Consider the divisor
$q^{-1}(\mathcal D)$ in $U(\sK)$, where $q\colon U(\sK)\to
\zk\cong U(\sK)/C$ is the quotient projection.

First assume that there are no ghost vertices in~$\sK$, so that
all $\mb a_k$ are nonzero. Then $\C^m\setminus U(\sK)$ has
codimension $\ge 2$. Hence the closure of $q^{-1}(\mathcal D)$ in
$\C^m$ is a $C$-invariant divisor in~$\C^m$. Choose an element
$\mb u=(u_1,\ldots,u_m)$ in $C$ such that $|u_k|>1$,
$k=1,\ldots,m$. Such an element exists since there is a relation
$\sum_{k=1}^m \lambda_k\mb a_k=0$ with all $\lambda_k>0$ (this
follows from the fact that $\Sigma$ is a complete fan). Denote by
$L$ the discrete subgroup of $C$ consisting of integral powers
of~$\mb u$; then $L\cong\Z$. Being a subgroup of $C$, the group
$L$ is diagonalisable and acts freely and properly on
$\C^m\backslash\{\mathbf0\}$, so the quotient is a Hopf manifold.
By Example~\ref{hopf}, any Hopf manifold is a complex moment-angle
manifold with $\ell=1$. Then if follows from
Theorem~\ref{onedim}~(b) that any analytic subset of
$(\C^m\backslash \{\mathbf 0\})/L$ is a union of coordinate
submanifolds. Hence the closure of $q^{-1}(\mathcal D)/L$ in
$(\C^m\backslash \{\mathbf 0\})/L$ is a union of coordinate
divisors, and the same holds for $\mathcal D\subset\zk$.

Now assume that there is one ghost vertex in~$\sK$, say the first
one. Then $U(\sK)=\C^\times\times U(\widetilde\sK)$, where
$\widetilde\sK$ does not have ghost vertices. Since the divisor
$q^{-1}(\mathcal D)\subset\C^\times\times U(\widetilde\sK)$ is
$C$-invariant, its projection to the first factor $\C^\times$ is
onto. Therefore, for any $z_1\in\C^\times$, the intersection
$\bigl(\{z_1\}\times U(\widetilde\sK)\bigr)\cap q^{-1}(\mathcal
D)$ is a divisor in $\{z_1\}\times U(\widetilde\sK)$. This divisor
is invariant with respect to the subgroup $\widetilde
C=\{(u_1,u_2,\ldots,u_m)\in C\colon u_1=1\}$. Choose an element
$\mb u=(1,u_2,\ldots,u_m)$ in $\widetilde C$ such that $|u_i|>1$
for $i\ge2$. Such an element exist since now we have a relation
$\sum_{i\ge2} \mu_i\mb a_i=0$ with all $\mu_i>0$, by the
completeness of the fan. Now we proceed as in the case when there
are no ghost vertices, and conclude that each $\bigl(\{z_1\}\times
U(\widetilde\sK)\bigr)\cap q^{-1}(\mathcal D)$ is a union of
coordinate divisors in $\{z_1\}\times U(\widetilde\sK)$. Since the
number of coordinate divisors is finite, $\bigl(\{z_1\}\times
U(\widetilde\sK)\bigr)\cap q^{-1}(\mathcal
D)=\{z_1\}\times\mathcal E$, where $\mathcal E\subset
U(\widetilde\sK)$ is a union of coordinate divisors. Thus,
$q^{-1}(\mathcal D)=\C^\times\times\mathcal E$, and $\mathcal D$
also has the required form.
\end{proof}

\begin{corollary}\label{nomerfun}
Under the assumptions of Theorem~{\rm\ref{nodiv}}, there are no
non-constant meromorphic functions on~$\zk$.
\end{corollary}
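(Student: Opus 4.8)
The plan is to argue by contradiction, extracting an infinite family of pairwise distinct divisors from the level sets of a supposed non-constant meromorphic function and then colliding this with the finiteness supplied by Theorem~\ref{nodiv}. So suppose $f$ is a non-constant meromorphic function on $\zk$, and let $U\subset\zk$ be the dense open subset on which $f$ is holomorphic and finite. Since $\zk$ is connected and $f|_U$ is a non-constant holomorphic function, its image $f(U)\subset\C$ is open (restrict to a complex disc on which $f$ is non-constant and apply the one-variable open mapping theorem), hence infinite.

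The central step is to manufacture infinitely many distinct divisors. For each $c\in\C$ the zero divisor $(f-c)_0$ of the meromorphic function $f-c$ is an effective divisor on $\zk$; it is non-zero exactly when $c\in f(U)$, and for $c_1\ne c_2$ the supports of $(f-c_1)_0$ and $(f-c_2)_0$ are disjoint over $U$, since $f$ cannot simultaneously equal $c_1$ and $c_2$ at a point of $U$. Hence the divisors $(f-c)_0$, for $c$ ranging over the infinite set $f(U)$, are pairwise distinct, and we obtain infinitely many distinct divisors on $\zk$.

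The assumptions of the corollary are precisely those of Theorem~\ref{nodiv}, so every divisor of $\zk$ is a union of coordinate divisors. The irreducible coordinate divisors are the codimension-one coordinate submanifolds $\mathcal Z_{\mathcal K_J}$ with $J=[m]\setminus\{i\}$, of which there are at most $m$; consequently there are at most $2^m$ divisors expressible as unions of coordinate divisors. This finiteness is incompatible with the infinite family $\{(f-c)_0\}$ produced above, so no non-constant meromorphic function exists. The only genuinely load-bearing point is the middle paragraph, where one must ensure that the level sets give honestly non-empty and pairwise distinct divisors; this is handled by passing to the zero divisor of $f-c$ on the compact manifold $\zk$ and comparing supports over the dense locus $U$, after which disjointness is automatic. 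The remaining ingredients—the finiteness of coordinate divisors and the direct applicability of Theorem~\ref{nodiv}—are immediate.
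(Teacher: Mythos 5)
Your proof is correct, but it reaches the contradiction by a different mechanism than the paper. The paper's argument uses a single level set: pick $\mb z_0$ in the dense $(\C^\times)^m/C$-orbit outside the pole set of $f$; the zero divisor of $f-f(\mb z_0)$ then passes through $\mb z_0$, and since every coordinate divisor is contained in the locus where some coordinate vanishes, no union of coordinate divisors can contain a point of the dense orbit --- an immediate contradiction with Theorem~\ref{nodiv}. You instead extract \emph{infinitely many} pairwise distinct level divisors $(f-c)_0$, $c\in f(U)$ (via the open mapping theorem applied on the dense locus $U$ where $f$ is holomorphic), and collide this with the finiteness of unions of coordinate divisors (at most $2^m$, since there are at most $m$ irreducible coordinate divisors). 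Both routes rest entirely on Theorem~\ref{nodiv}; the paper's is shorter because it exploits \emph{where} coordinate divisors sit (they miss the open orbit), whereas yours only exploits \emph{how many} there are, at the cost of invoking the open mapping theorem and a counting step. One small imprecision on your side: $(f-c)_0$ can be non-zero even for $c\notin f(U)$ (components hidden in the indeterminacy locus), so ``non-zero exactly when $c\in f(U)$'' should be weakened to ``non-zero whenever $c\in f(U)$''; this is the only direction your argument uses, so nothing breaks.
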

\begin{proof}
Let $f$ be a non-constant meromorphic function on~$\zk$. Choose a
point $\mb z_0\in\zk$ in the dense $(\C^\times)^m/C$-orbit outside
of the pole set of $f$. Then the support of the zero divisor of
the function $f(\!\mb z)-f(\!\mb z_0)$ contains a point $\mb z_0$
in the dense $(\C^\times)^m/C$-orbit, so it does not lie in the
union of coordinate divisors. This contradicts
Theorem~\ref{nodiv}.
\end{proof}

\subsection{General subvarieties}\label{gensubm}
As we can see from Theorem~\ref{onedim}, the geometry of $\zk$
depends essentially on the geometric data, namely on a choice of
maps~$A$ and~$\varPsi$. In the situation of Theorem~\ref{onedim}
(i.e. $\ell=1$), the case when no rational function vanishes on
$\Ker A$ is generic; so that $\zk$ has only coordinate
submanifolds for generic geometric data. As we shall see, the
situation is similar in the case of higher-dimensional foliations
($\ell>1$), although the generic condition on the initial data
will be more subtle.

\begin{lemma}\label{generic_cond}
Assume that data $\{\sK;\mb a_1,\ldots,\mb a_m\}$ define a
complete fan $\Sigma$ in $N_\R\cong \R^n$ such that no rational
linear function on $\R^m$ vanishes on $\Ker A$. Then for almost
all subspaces $\mathfrak{c} \subset \C^m$ satisfying the
conditions of Construction~{\rm\ref{psi}}, the only complex
subspace~$L\subset \C^m$ such that
\begin{itemize}
\item[(a)] $\mathfrak{c}\subset L$,
\item[(b)] $\overline{\mathfrak{c}}\cap L\ne\{\bf0\}$,
\item[(c)] $L\cap i\R^m$ is a rational subspace in $i\R^m$,
\end{itemize}
is the whole $L=\C^m$.
\end{lemma}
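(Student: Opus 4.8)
The plan is to reparametrise the admissible $\mathfrak c$ as complex structures on $\Ker A$, to give an \emph{exact} characterisation of when a proper $L$ satisfying (a)--(c) exists, and then to see that characterisation as a countable union of measure-zero conditions. First I would record that conditions (a$'$) and (b$'$) of Construction~\ref{psi} identify the admissible subspaces $\mathfrak c$ with the complex structures on the real space $V:=\Ker A$. Indeed $\mathfrak c\subseteq\Ker A_\C=V\oplus iV$ and $\Re|_{\mathfrak c}\colon\mathfrak c\to V$ is an $\R$-linear isomorphism, so $\mathfrak c=\{x+iTx:x\in V\}$ for a unique $\R$-linear $T\colon V\to V$, and complex-linearity of $\mathfrak c$ forces $T^2=-\mathrm{id}$. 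Thus ``almost all $\mathfrak c$'' means ``almost all complex structures $T$ on $V$'', for the natural smooth measure. I would also simplify condition (c): since $L$ is a complex subspace, $L\cap i\R^m=i(L\cap\R^m)$, so (c) is equivalent to $R:=L\cap\R^m$ being a rational subspace of $\R^m$; note $L=\C^m$ iff $R=\R^m$, so for a proper $L$ the subspace $R\subsetneq\R^m$ is proper.

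The core is the following claim, which I would prove next: a proper complex subspace $L$ satisfying (a),(b),(c) exists if and only if there is a proper rational subspace $R\subsetneq\R^m$ such that $R\cap V$ is nonzero and $T$-invariant. For sufficiency I would take $L=\mathfrak c+R_\C$; a direct computation with $\mathfrak c=\{x+iTx\}$ gives $L\cap\R^m=R+T(R\cap V)$, which equals the rational $R$ exactly when $R\cap V$ is $T$-invariant, while (a) is clear, (b) holds because $x-iTx=(x+iTx)-2iTx\in\mathfrak c+R_\C$ for $0\neq x\in R\cap V$, and $\dim_\C L=\dim_\R(R+V)-\ell\le m-\ell<m$, so $L$ is proper.

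For necessity I would set $R:=L\cap\R^m$, a proper rational subspace. The subspace $S:=\overline{\mathfrak c}\cap L\neq0$ of (b) satisfies $\overline S=\mathfrak c\cap\overline L\subseteq L\cap\overline L=R_\C$, so its real part $\Sigma:=\Re(\overline S)$ is a nonzero $T$-invariant subspace contained in $R\cap V$; in particular $R\cap V\neq 0$. The decisive step is that $R\cap V$ is \emph{itself} $T$-invariant, not merely a container of a $T$-invariant subspace: since $R\subseteq L$ and $L$ is complex we have $L\supseteq\mathfrak c+R_\C$, hence $R=L\cap\R^m\supseteq(\mathfrak c+R_\C)\cap\R^m=R+T(R\cap V)$, which forces $T(R\cap V)\subseteq R\cap V$. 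I expect this upgrade to be the main obstacle, and it is what makes the whole argument work: the naive necessary condition ``$R\cap V$ contains a nonzero $T$-invariant subspace'' is \emph{not} measure-zero (it can hold for every $T$ when $\dim(R\cap V)>\ell$), whereas ``$R\cap V$ is $T$-invariant'' is.

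Finally I would run the genericity count. There are only countably many rational subspaces $R\subsetneq\R^m$, hence countably many fixed subspaces $U:=R\cap V\subseteq V$. For each fixed $U\neq V$ the set of complex structures $T$ with $TU\subseteq U$ is a proper analytic subset of the manifold of complex structures on $V$ (empty when $\dim U$ is odd, of positive codimension when $\dim U$ is even with $0<\dim U<\dim V$), hence of measure zero. The only way $R\cap V$ could be $T$-invariant on a positive-measure set of $T$ is $R\cap V=V$, i.e. $\Ker A=V\subseteq R$; but the hypothesis that no rational linear function vanishes on $\Ker A$ says exactly that $\Ker A$ lies in no proper rational subspace, so this case is excluded. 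Taking the countable union of these measure-zero sets, for $\mathfrak c$ (equivalently $T$) outside it the characterisation shows no proper $L$ satisfies (a)--(c); hence $L=\C^m$, as required.
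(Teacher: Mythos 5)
Your argument is correct and is essentially the paper's own proof in different packaging: the paper likewise reduces everything to the observation that a proper $L$ forces $Q=\Ker A\cap L=(L\cap\R^m)\cap\Ker A$ to be a nonzero, proper subspace of $\Ker A$ invariant under the operator $\pi_{\Im}\circ\pi_{\Re}^{-1}$ (your complex structure $T$), notes that $Q$ ranges over a countable family because it is cut out by a rational subspace, excludes $Q=\Ker A$ by the irrationality hypothesis, and finishes with a codimension/measure count over the space of admissible $\mathfrak c$. Your extra ``if'' direction (making the criterion an exact characterisation) and your remark that the naive condition ``$R\cap V$ contains a nonzero $T$-invariant subspace'' would \emph{not} be measure-zero are not in the paper, but they are correct and clarify why full invariance is the right condition.

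One slip, which happens to be harmless: in the sufficiency half your dimension formula is wrong. For $L=\mathfrak c+R_\C$ one has $\dim_\C L=\ell+\dim_\R R-\tfrac12\dim_\R(R\cap V)$, not $\dim_\R(R+V)-\ell$; for instance with $V=\R^4$ (so $m=4$, $\ell=2$), $T$ the standard complex structure and $R=\R\langle\mb e_1,\mb e_2\rangle$, one gets $L=\C\langle\mb e_1,\mb e_2,\mb e_3+i\mb e_4\rangle$ of complex dimension $3$, not $2$. This does not damage the proof: properness of $L$ follows at once from your own computation $L\cap\R^m=R\subsetneq\R^m$ (a proper complex subspace cannot meet $\R^m$ in all of it), and in any case only the necessity direction together with the measure count is used to establish the lemma.
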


\begin{proof}
The subspace $\mathfrak{c}=\varPsi(\C^\ell)$ is a point in the
Grassmannian $\Gr_\C(\C^\ell, \Ker A_\C)$, which has complex
dimension $\ell^2$. The conditions of Construction~\ref{psi}
specify an open subset in $\Gr_\C(\C^\ell, \Ker A_\C)$; we refer
to a map $\varPsi\colon\C^\ell\to\C^m$ satisfying these conditions
and the corresponding subspace $\mathfrak{c}=\varPsi(\C^\ell)$ as
\emph{admissible}. We shall prove that the set of admissible
$\mathfrak{c}$ for which there exist $L\subsetneq\C^m$ satisfying
properties (a)--(c) is contained in a countable union of manifolds
of dimension $<\ell^2$ and therefore has zero Lebesgue measure.

Let $L\subsetneq\C^m$ be a complex subspace satisfying conditions
(a)--(c). Set $Q=\Ker A\cap L$, and let $\dim_\R Q=q$. Conditions
(a) and (b) imply $q>0$. Also, since no rational linear function
vanishes on $\Ker A$ and $L\cap i\R^m$ is a proper rational
subspace, $L$ cannot contain the whole~$\Ker A$. Hence,
$0<q<2\ell$. Let $\pi_{\Re},\pi_{\Im}\colon \mathfrak{c}\to \Ker
A$ denote the projections onto the real and imaginary parts (which
are both isomorphisms of real spaces). For a given $v\in Q$ there
exists a unique $w\in\Ker A$ such that $v+iw\in\mathfrak{c}\subset
L$. Since $v,v+iw\in L$, the vector $w$ lies in $L$. Therefore,
\begin{equation}\label{q_proj}
  \pi_{\Im}\circ\pi^{-1}_{\Re}(Q)=Q,
\end{equation}
as the operator on the left hand side sends $v$ to $w$. In
particular, the operator $\pi_{\Im}\circ\pi^{-1}_{\Re}$ defines a
complex structure on the space~$Q$. Hence $\dim_\C
\mathfrak{c}\cap (Q\otimes\C)=q/2$. Now, for a fixed subspace
$Q\subsetneq \Ker A$, the set of complex subspaces
$\mathfrak{c}\subset\Ker A_\C$ satisfying condition~\eqref{q_proj}
is identified with an open subset in
\[
  \Gr\nolimits_\C\bigl(\C^{q/2},Q\otimes\C\bigr)\times
  \Gr\nolimits_\C\bigl(\C^{\ell-q/2}, (\Ker
  A_\C)/(Q\otimes\C)\bigr)
\]
and has complex dimension $(q/2)^2+(\ell-q/2)^2<\ell^2$. Since
there are only countably many rational subspaces in $i\R^m$, there
are countably many $Q\subset \Ker A$. Hence the set of spaces
$\mathfrak{c}$ for which there exist $L\subsetneq\C^m$ satisfying
properties (a)--(c) is contained in the union of countably many
manifolds of dimension $<\ell^2$. Thus its Lebesgue measure is
zero.
\end{proof}

Our final results describe analytic subsets in a complex
moment-angle manifold $\zk$, under a generic assumption on the
complex structure and a geometric assumption on the fan $\Sigma$:

\begin{theorem}\label{final}
Let $\zk$ be a complex moment-angle manifold, with linear maps
$A\colon\R^m\to N_\R$ and $\varPsi\colon\C^\ell\to\C^m$ described
in Construction~{\rm\ref{psi}}. Assume that
\begin{itemize}
\item[(a)]no rational linear function on
$\R^m$ vanishes on $\Ker A$;
\item[(b)] $\Ker A$ does not contain rational vectors of~$\R^m$;
\item[(c)]the map $\varPsi$ satisfies the generic condition of
Lemma~{\rm\ref{generic_cond}};
\item[(d)] the fan $\Sigma$ is weakly normal.
\end{itemize}
Then any irreducible analytic subset $Y\subsetneq\zk$ of
positive dimension is contained in a coordinate submanifold $\mathcal Z_{\mathcal K_J}\subsetneq \zk$.
\end{theorem}
\begin{proof}
Assume that $Y\subset\zk$ is an irreducible analytic subset of the
smallest positive dimension. Let $\cF_Y$ be the foliation on $Y$
associated with $\cF$, i.e. $T_y\cF_Y=T_yY \cap T_y\cF$ (here
$T_yY$ denotes the Zariski tangent space). Assuming that $Y$
contains a generic point (i.e. a point from
$(\C^\times)^m/C\subset\zk$), we need to show that $Y$ is the
whole~$\zk$. As we assume~(d), Theorem~\ref{form2} applies, giving
a transverse K\"ahler form~$\omega_\cF$. Since $\omega_\cF$ is
exact, the integral $\int_Y \omega_\cF^{\dim Y}$ vanishes, hence
the foliation $\cF_Y$ is nontrivial. For any $z\in\zk$, the
tangent space $T_y \cF$ is naturally identified with the vector
space $\Ker A_\C/\mathfrak{c}$. Let $k$ be the complex dimension
of $\cF_Y$ at a generic point of~$Y$, and let $\widetilde Y\subset
Y\times \mathop{\mathrm{Gr}}_\C(\C^k,\Ker A_\C/\mathfrak{c})$ be
the space
of all $k$-dimensional planes $V\subset T_yY \cap T_y\cF$. %
Denote by $\pi_Y$ and $\pi_G$ the projections of $\widetilde Y$ to
$Y$ and $\mathop{\mathrm{Gr}}_\C(\C^k,\Ker A_\C/\mathfrak{c})$,
respectively. For any $k$-dimensional plane $V\subset \Ker
A_\C/\mathfrak{c}$, the analytic subset $\pi_Y(\pi_G^{-1}(V))$ is
identified with the closure of the set of all points $y\in Y$ such
that $T_y Y=V$. Since $Y\subset\zk$ is an analytic subset of the
smallest dimension, $\pi_Y(\pi_G^{-1}(V))$ either is 0-dimensional
for all $V$ or coincides with $Y$.

Assume that $\dim\pi_Y(\pi_G^{-1}(V))=0$ for all $V\subset \Ker
A_\C/\mathfrak{c}$. Then $\widetilde Y$ admits a meromorphic map
to $\mathop{\mathrm{Gr}}(\C^k,\Ker A_\C/\mathfrak{c})$ which is
finite at a generic point, hence $\widetilde Y$ is Moishezon. The
map $\widetilde Y\to Y$ is surjective, so $Y$ is also Moishezon.
Then $Y$ is of Fujiki class $\mathcal C$ by the classical result
of~\cite{mois67}.
By the assumption~(b), Corollary~\ref{leavescor} applies, leading
to a contradiction.

Now assume that $\pi_Y(\pi_G^{-1}(V))=Y$ for some $k$-dimensional
plane $V\subset \Ker A_\C/\mathfrak{c}$. In other words, $T_yY
\cap T_y\cF=V$ for a generic point $y\in Y$. Let $H\subset
(\C^\times)^m/C$ be the largest closed complex subgroup preserving
$Y\subset U(\sK)/C$, and let $\mathfrak{h}\subset
\C^m/\mathfrak{c}$ be the Lie algebra of~$H$. Let $L\subset \C^m$
be the preimage of $\mathfrak{h}$. Then
\begin{itemize}
\item[--] $\mathfrak{c}\subset L$;
\item[--] $L\cap i\R^m$ is a rational subspace, since $H$ is closed;
\item[--] $\overline{\mathfrak{c}}\cap L\ne\{\bf0\}$, since $\mathfrak{h}\supset V\subset
\Ker A_\C/\mathfrak c$.
\end{itemize}
As we assume (a) and (c), Lemma~\ref{generic_cond} applies,
implying that $L=\C^m$. Hence, $H=(\C^\times)^m/C$ and $Y=\zk$.
\end{proof}

\begin{corollary}\label{final2}
Assume that every coordinate submanifold $\mathcal Z_{\mathcal
K_J}\subseteq \zk$ either satisfies the assumptions {\rm(a)--(d)}
of Theorem~{\rm\ref{final}}, or is a compact complex torus with no
analytic subsets of positive dimension. Then any irreducible
analytic subset $Y\subsetneq\zk$ of positive dimension is a
coordinate submanifold $\mathcal Z_{\mathcal K_J}\subsetneq \zk$.
\end{corollary}
\begin{proof}
Let $Y\subsetneq \zk$ be an irreducible analytic subset. Applying
Theorem~\ref{final} to $\zk$ we conclude that $Y$ is contained in
a coordinate submanifold $\mathcal Z_{\mathcal K_J}\subsetneq
\zk$. If $Y=\mathcal Z_{\mathcal K_J}$, we are done. Otherwise we
have two options: either $\mathcal Z_{\mathcal K_J}$ itself
satisfies the assumptions of Theorem~\ref{final} and we can
proceed by induction, or $\mathcal Z_{\mathcal K_J}$ is a compact
complex torus. In the latter case $Y=\mathcal Z_{\mathcal K_J}$,
by the assumption.
\end{proof}


\begin{thebibliography}{BM}
\bibitem{_Amoros_etc_}
J. Amoros, M. Burger, K. Corlette, D.~Kotschick and D.~Toledo.
{\em Fundamental groups of compact K\"ahler manifolds}.
Mathematical Surveys and Monographs,~44. Amer. Math. Soc.,
Providence, R.I., 1996.

\bibitem{a-d-h-l13}
I. Arzhantsev, U. Derenthal, J. Hausen and A. Laface. \emph{Cox
rings}. Cambridge Studies in Adv. Math.,~144. Cambridge Univ.
Press, Cambridge, 2014.

\bibitem{audi91}
M. 
Audin. \emph{The Topology of Torus Actions on Symplectic
Manifolds}. Progress in Math.,~93. Birkh\"auser, Basel, 1991.

\bibitem{b-b-c-g10}
A. Bahri, M. Bendersky, F.\,R.~Cohen and S.~Gitler. \emph{The
polyhedral product functor: a method of computation for
moment-angle complexes, arrangements and related spaces}. Adv.
Math.~\textbf{225} (2010), no.~3, 1634--1668.

\bibitem{ba-za}
F. Battaglia and D. Zaffran. \emph{Foliations modelling
nonrational simplicial toric varieties}. Internat. Math. Res.
Notices 2015; arXiv:1108.1637.

\bibitem{_Bogomolov:tensors_}
F. Bogomolov. {\em Holomorphic tensors and vector bundles on
projective varieties.} Izv. Akad. Nauk SSSR Ser. Mat.~\textbf{42}
(1978), no.~6, 1227--1287 (Russian); Math. USSR
Izvestija~\textbf{13} (1979), no.~3, 499--555 (English).

\bibitem{bo-me06}
F. 
Bosio and L. 
Meersseman. \emph{Real quadrics in $\mathbb
C^n$, complex manifolds and convex polytopes.} Acta
Math.~\textbf{197} (2006), no.~1, 53--127.

\bibitem{_Brunella:Inoue_}
M.~Brunella. \emph{A characterization of Inoue surfaces}. Comment.
Math. Helv.~\textbf{88} (2013), no.~4, 859--874.

\bibitem{_Buchdahl:surfaces_}
N.\,P. Buchdahl. {\em On compact K\"ahler surfaces}. Ann. Inst.
Fourier (Grenoble)~\textbf{49} (1999), no.~1, 287--302.

\bibitem{bu-pa00}
V. Buchstaber and T. Panov. \emph{Torus actions, combinatorial
topology and homological algebra}. Uspekhi Mat. Nauk~{\bf 55}
(2000), no.~5, 3--106 (Russian). Russian Math. Surveys~{\bf 55}
(2000), no.~5, 825--921 (English).

\bibitem{bu-pa15}
V. Buchstaber and T. Panov. \emph{Toric Topology}. Mathematical
Surveys and Monographs, 204, Amer. Math. Soc., Providence, RI,
2015.

\bibitem{_Chiose_Toma:surfaces_}
I. Chiose, M. Toma. \emph{On compact complex surfaces of K\"ahler
rank one}. Amer. J. Math.~\textbf{135} (2013), no.~3, 851--860.

\bibitem{c-l-s11}
D. Cox, J. Little and H. Schenck. \emph{Toric Varieties.} Graduate
Studies in Mathematics,~124. Amer. Math. Soc., Providence, RI,
2011.

\bibitem{cu-za07} S. Cupit-Foutou and D. Zaffran.
\emph{Non-K\"{a}hler manifolds and GIT-quotients}. Math.~Z.
\textbf{257} (2007), no.~4, 783--797.

\bibitem{dema12}
J.
-P. 
Demailly. \emph{Complex Analytic and Differential
Geometry}. A book project; available at {\tt
http://www-fourier.ujf-grenoble.fr/\~{}demailly/documents.html}.

\bibitem{de-pa04}
J.
-P. 
Demailly and M. 
Paun. \emph{Numerical characterization of the K\"ahler cone of a
compact K\"ahler manifold}.  Ann. of Math.~(2)~\textbf{159}
(2004), no.~3, 1247--1274.

\bibitem{_Dragomir_Ornea_}
S. Dragomir and L. Ornea. \emph{Locally conformal K{\"a}hler
geometry}. Progress in Math.,~155.  Birkh{\"a}user, Boston--Basel,
1998.

\bibitem{_Dumitrescu:holo_stru_}
S. 
Dumitrescu. \emph{Structures g\'eom\'etriques holomorphes sur les
vari\'et\'es complexes compactes}.
Ann. Sci. \'Ecole Norm. Sup.~(4)~\textbf{34} (2001), no.~4,
557--571  (French).

\bibitem{_Dumitrescu:holo_met_}
S. 
Dumitrescu. \emph{M\'etriques riemanniennes holomorphes en petite
dimension}.
Ann. Inst. Fourier (Grenoble)~\textbf{51} (2001), no.~6,
1663--1690  (French).

\bibitem{ewal96}
G. 
Ewald. \emph{Combinatorial Convexity and Algebraic Geometry}.
Graduate Texts in Math.,~168. Springer-Verlag, New-York, 1996.

\bibitem{fult93}
W. 
Fulton. \emph{Introduction to Toric Varieties}. Ann. of Math.
Studies,~131. Princeton Univ. Press, Princeton, NJ, 1993.


\bibitem{_Gauduchon_Ornea_}
P. Gauduchon and L. Ornea. \emph{Locally conformally K\"ahler
metrics on Hopf surfaces}. Ann. Inst. Fourier
(Grenoble)~\textbf{48} (1998), no.~4, 1107--1127.

\bibitem{gi-lo13}
S. Gitler and S. L\'opez de Medrano. \emph{Intersections of
quadrics, moment-angle manifolds and connected sums.}  Geom.
Topol.~\textbf{17} (2013), no.~3, 1497--1534.

\bibitem{gr-th13}
J. Grbi\'c and S. Theriault. \emph{The homotopy type of the
polyhedral product for shifted complexes.} Adv. Math.~\textbf{245}
(2013), 690--715.

\bibitem{_Harvey_Lawson:currents_}
R. Harvey and H. B. Lawson. {\em An intrinsic characterisation of
K\"ahler manifolds}. Invent. Math.~\textbf{74} (1983), no.~2,
169--198.

\bibitem{isid}
H. Ishida. \emph{Complex manifolds with maximal torus actions}.
Preprint (2013); arXiv:1302.0633.

\bibitem{_Kamishima_Ornea_}
Y. Kamishima and L. Ornea. {\em Geometric flow on compact locally
conformally Kahler manifolds}. Tohoku Math. J. (2)~\textbf{57}
(2005), no.~2, 201--221.

\bibitem{_Kato:submanifolds_}
M. 
Kato. {\em On a characterization of submanifolds of Hopf
manifolds}. Complex analysis and algebraic geometry. Iwanami
Shoten, Tokyo, 1977, pp.~191--206.

\bibitem{_Lamari_}
A. 
Lamari. \emph{Courants k\"ahl\'eriens et surfaces compactes}.
Ann. Inst. Fourier (Grenoble)~\textbf{49} (1999), no.~1, 263--285
(French).

\bibitem{lo-ni99}
J.
-J. 
Loeb and M. 
Nicolau. \emph{On the complex geometry of a
class of non-K\"ahlerian manifolds}. Israel J. Math.~\textbf{110}
(1999), 371--379.

\bibitem{meer00}
L. 
Meersseman. \emph{A new geometric construction of
compact complex manifolds in any dimension.} Math.
Ann.~\textbf{317} (2000), 79--115.

\bibitem{me-ve04}
L. 
Meersseman and A. 
Verjovsky. \emph{Holomorphic principal
bundles over projective toric varieties.} J. Reine Angew.
Math.~\textbf{572} (2004), 57--96.

\bibitem{mois67}
B. 
Moishezon. \emph{On $n$-dimensional compact complex varieties with
$n$ algebraically independent meromorphic functions. I.} Izv.
Akad. Nauk SSSR Ser. Mat. {\bf 30} (1966), 133--174 (Russian); Am.
Math. Soc. Transl., II. Ser.~\textbf{63} (1967), pp.~51--177
(English).

\bibitem{_Oeljeklaus_Toma_}
K. 
Oeljeklaus and M. 
Toma. \emph{Non-K\"ahler compact complex manifolds associated to
number fields}. Ann. Inst. Fourier (Grenoble)~{\bf 55} (2005),
no.~1, 161--171.

\bibitem{_OV:LCK_pot_}
L. Ornea and M. Verbitsky. {\em Locally conformally K\"ahler
manifolds with potential}. Math. Ann.~\textbf{348} (2010), no.~1,
25--33.

\bibitem{_OV:Oeljeklaus-Toma_}
L. Ornea and M. Verbitsky. {\em Oeljeklaus--Toma manifolds
admitting no complex subvarieties}. Math. Res. Lett.~\textbf{18}
(2011), no.~4, 747--754.

\bibitem{_Panov_Petrunin_}
D. Panov and A. Petrunin. \emph{Telescopic actions}. Geom. Funct.
Anal.~\textbf{22} (2012), no.~6, 1814--1831.

\bibitem{pano13}
T. Panov. \emph{Geometric structures on moment-angle manifolds}.
Uspekhi Mat. Nauk~\textbf{68} (2013), no.~3, 111--186 (Russian);
Russian Math. Surveys~\textbf{68} (2013), no.~3, 503--568
(English).

\bibitem{pa-us12}
T. Panov and Yu. Ustinovsky. \emph{Complex-analytic structures on
moment-angle manifolds}. Moscow Math. J.~{\bf 12} (2012), no.~1,
149--172.

\bibitem{tamb12}
J. 
Tambour. \emph{LVMB manifolds and simplicial spheres.} Ann. Inst.
Fourier (Grenoble)~\textbf{62} (2012), no.~4, 1289--1317.

\bibitem{_Vaisman:non-Kahler_}
I. 
Vaisman. \emph{A geometric condition for an l.c.K. manifold to be
K\"ahler.} Geom. Dedicata~\textbf{10} (1981), no.~1--4, 129--134.

\bibitem{_Sima:curves_}
S. 
Verbitskaya. {\em Curves on Oeljeklaus--Toma manifolds}.
Funktsional. Anal. i Prilozhen.~{\bf 48} (2014), no.~3, 84--8
(Russian); Funct. Anal. and Appl.~{\bf 48} (2014), no.~3, 223--226
(English).

\bibitem{_Verbitsky_vanishing_}
M. Verbitsky. \emph{Theorems on the vanishing of cohomology for
locally conformally hyper-K\"ahler manifolds}. Tr. Mat. Inst.
Steklova~\textbf{246} (2004), 64--91 (Russian); Proc. Steklov
Inst. Math.~\textbf{246} (2004), no.~3, 54--78 (English).

\bibitem{_Verbitsky:Sta_Elli_}
M. Verbitsky. {\em Stable bundles on positive principal elliptic
fibrations}. Math. Res. Lett.~\textbf{12} (2005), no.~2--3,
251--264.

\bibitem{_Verbitsky:stable-Hopf_}
M. Verbitsky. {\em Holomorphic bundles on diagonal Hopf
manifolds}. Izv. Ross. Akad. Nauk Ser. Mat.~\textbf{70} (2006),
no.~5, 13--30 (Russian); Izv. Math.~\textbf{70} (2006), no.~5,
867--882 (English).

\bibitem{_Verbitsky:toric_}
M. Verbitsky. {\em Positive toric fibrations}.  J. Lond. Math.
Soc. (2)~\textbf{79} (2009), no.~2, 294--308.
\end{thebibliography}
\end{document}